\documentclass[10pt,a4paper]{article}
\usepackage[utf8]{inputenc}
\usepackage{amsmath, amsthm}
\usepackage{esint}
\usepackage{amsfonts}
\usepackage{amssymb}
\usepackage{hyperref}
\usepackage{cleveref}
\usepackage{graphicx}
\usepackage[T1]{fontenc}
\usepackage[francais,english]{babel}
\usepackage{listings}
\usepackage{vmargin} 
\usepackage{dsfont}
\usepackage{color}

\usepackage[final]{pdfpages}
\numberwithin{equation}{section}

\newtheorem{de}{Definition}[section]
\newtheorem{thm}[de]{Theorem}

\newtheorem{prop}[de]{Proposition}
\newtheorem{lem}[de]{Lemma}
\newtheorem{rem}[de]{Remark}

\newcommand\eps{\varepsilon}
\newcommand\E{\mathcal{E}}
\newcommand\F{\mathcal{F}}
\newcommand\V{\mathcal{V}}
\newcommand\A{\mathcal{A}}

\newcommand\G{\mathcal{G}}
\newcommand\C{\mathcal{C}}
\newcommand\W{\mathcal{W}}

\newcommand\M{\mathcal{M}}

\newcommand\vv{\mathbf{v}}
\newcommand\rd{\mathrm{d}}
\newcommand\KFR{\mathtt{WFR}}
\newcommand\FR{\mathtt{FR}}
\newcommand\HK{\mathtt{HK}}
\renewcommand\W{\mathtt{W}}
\newcommand\grad{\operatorname{grad}}

\newcommand\trho{\tilde{\rho}}

\newcommand\supp{\mathop{\rm supp}}

\renewcommand{\textbf}[1]{\begingroup\bfseries\mathversion{bold}#1\endgroup}

\DeclareMathOperator*{\dive}{div}
\DeclareMathOperator*{\argmin}{argmin}
\newcommand\Rn{\mathbb{R}^d}
\DeclareMathOperator*{\R}{\mathbb{R}}

\DeclareMathOperator*{\Paa}{\mathcal{P}^{ac}}

\title{An unbalanced Optimal Transport splitting scheme for general advection-reaction-diffusion problems}
\author{T.O. Gallou\"et, M. Laborde, L. Monsaingeon}

\begin{document}

\maketitle

\begin{abstract}
In this paper, we show that unbalanced optimal transport provides a convenient framework to handle reaction and diffusion processes in a unified metric framework.
We use a constructive method, alternating minimizing movements for the Wasserstein distance and for the Fisher-Rao distance, and prove existence of weak solutions for general scalar reaction-diffusion-advection equations.
We extend the approach to systems of multiple interacting species, and also consider an application to a very degenerate diffusion problem involving a Gamma-limit.
Moreover, some numerical simulations are included.
\end{abstract}

\section{Introduction}
Since the seminal works of Jordan-Kinderlehrer-Otto \cite{JKO}, it is well known that certain diffusion equations can be interpreted as gradient flows in the space of probability measures, endowed with the quadratic Wasserstein distance $\W$.
The well-known JKO scheme (a.k.a. minimizing movement), which is a natural implicit Euler scheme for such gradient flows, naturally leads to constructive proofs of existence for weak solutions to equations or systems with mass conservation such as, for instance, Fokker-Planck equations \cite{JKO}, Porous Media Equations \cite{O1}, aggregation equation \cite{CDFLS}, double degenerate diffusion equations \cite{O}, general degenerate parabolic equation \cite{A} etc.
We refer to the classical textbooks of Ambrosio, Gigli and Savar\'e \cite{AGS} and to the books of Villani \cite{V1,V2} for a detailed account of the theory and extended bibliography. Recently, this theory has been extended to study the evolution of interacting species with mass-conservation, see for examples \cite{DFF,Zinsl,L,KMX,CL1}.  

 Nevertheless in biology, for example for diffusive prey-predator models, the conservation of mass may not hold, and the classical optimal transport theory does not apply.
 An unbalanced optimal transport theory was recently introduced simultaneously in \cite{CPSV,CPSV1,KMV,LMS_big_15,LMS_small_15}, and the resulting Wasserstein-Fisher-Rao ($\KFR$) metrics (also referred to as the Hellinger-Kantorovich distance $\HK$) allows to compute distances between measures with variable masses while retaining a convenient Riemannian structure.
 See section \ref{part4-section2WFR} for the definition and a short discussions on this $\KFR$ metric.
 We also refer to \cite{piccoli_rossi_generalized_14,figalli2010new} for earlier attempts to account for mass variations within the framework of optimal transport.
 
The $\KFR$ metrics can be seen as an \emph{inf-convolution} between Wasserstein/transport and Fisher-Rao/reaction processes, and is therefore extremely convenient to control both in a unified metric setting.
This allows to deal with non-conservative models of population dynamics, see e.g. \cite{KMV,Kondratyev20162784}.
In \cite{GM}, the first and third authors proposed a variant of the JKO scheme for $\KFR$-gradient flows corresponding to some particular class of reaction-diffusion PDEs: roughly speaking, the reaction and diffusion were handled separately in two separate $\FR,\W$ metrics, and then patched together using a particular uncoupling of the inf-convolution, namely $\KFR^2\approx \W^2+\FR^2$ in some sense (see \cite[section 3]{GM} for a thorough discussion).
However, the analysis was restricted to very particular structures for the PDE, corresponding to pure $\KFR$ gradient-flows.

In this work we aim at extending this splitting scheme in order to handle more general reaction-diffusion problems, not necessarily corresponding to gradient flows.
Roughly speaking, the structure of our splitting scheme is the following: the transport/diffusion part of the PDE is treated by a single Wasserstein JKO step
$$
\rho^k\xrightarrow[\mbox{transport}]{\W}\rho^{k+1/2},
$$
and the next Fisher-Rao JKO step
$$
\rho^{k+1/2}\xrightarrow[\mbox{reaction}]{\FR}\rho^{k+1}
$$
handles the reaction part of the evolution.
As already mentioned, the $\KFR$ metric will allow to suitable control both steps in a unified metric framework.
We will first state a general convergence result for scalar reaction-diffusion equations, and then illustrate on a few particular examples how the general idea can be adapted to treat e.g. prey-predator systems or very degenerate Hele-Shaw diffusion problems.
In this work we do not focus on optimal results and do not seek full generality, but rather wish to illustrate the efficiency of the general approach.

Another advantage of the splitting scheme is that is well adapted to existing Monge/Kantorovich/Wasserstein numerical solvers, and the Fisher-Rao step turns out to be a simple pointwise convex problem which can be implemented in a very simple way.
See also \cite{simone_lenaic,chizat2016scaling} for a more direct numerical approach by entropic regularization.
Throughout the paper we will illustrate the theoretical results with a few numerical tests.
All the numerical simulations were implemented with the augmented Lagrangian {\rm ALG2-JKO} scheme from \cite{BCL} for the Wasserstein step, and we used a classical Newton algorithm for the Fisher-Rao step.
\\

The paper is organized as follows.
In section~\ref{part4-section2WFR} we recall the basic definitions and useful properties of the Wasserstein-Fisher-Rao distance $\KFR$.
Section~\ref{sec:KFRsplitting} contains the precise description of the splitting scheme and a detailed convergence analysis for a broad class of reaction-diffusion equations.
In section~\ref{section:systems} we present an extension to some prey-predator multicomponent systems with nonlocal interactions.
In section~\ref{part4-section2HS} we extend the general result from section~\ref{sec:KFRsplitting} to a very degenerate tumor growth model studied in \cite{PQV}, corresponding to a pure $\KFR$ gradient flow: we show that the splitting scheme captures fine properties of the model, particularly the $\Gamma$-convergence of discrete gradient flows as the degenerate diffusion parameter of Porous Medium type $m\to\infty$.
The last section~\ref{part4-section2HSN} contains an extension to a tumor-growth model coupled with an evolution equation for the nutrients.

\section{Preliminaries}
\label{part4-section2WFR}

Let us first fix some notations.
Throughout the whole paper, $\Omega$ denotes a possibly unbounded convex subset of $\R^d$, $Q_T$ represents the product space $[0,T] \times \Omega$, for $T>0$, and we write $\M^+=\M^+(\Omega)$ for the set of nonnegative finite Radon measures on $\Omega$.
We say that a curve of measures $t\mapsto \rho_t\in\mathcal C_w([0,1];\M^+)$ is narrowly continuous if it is continuous with respect to the narrow convergence of measures, namely for the duality with $\C_b(\Omega)$ test-functions.
\begin{de}
\label{def:FR}
The Fisher-Rao distance between $\rho_0,\rho_1\in \M^+$ is
 $$
 \FR(\rho_0,\rho_1):= \min_{(\rho_t,r_t) \in \A_{\FR}[\rho_0,\rho_1]} \int_0^1\int_\Omega |r_t|^2 \,d\rho_t(x)dt,
 $$
 where the admissible set $A_{\FR}[\rho_0,\rho_1]$ consists in curves $[0,1]\ni t\mapsto (\rho_t,r_t)\in\M^+\times\M$ such that $t\mapsto \rho_t\in\mathcal C_w([0,1];\M^+)$ is narrowly continuous with endpoints $\rho_t(0)=\rho_0,\rho_t(1)=\rho_1$, and
 $$
 \partial_t\rho_t=\rho_tr_t
 $$
 in the sense of distributions $\mathcal D'((0,1)\times\Omega)$.
\end{de}
The Monge-Kantorovich-Wasserstein admits several equivalent definitions and formulations, and we refer e.g. to \cite{V1,V2,AGS,S} for a complete description.
For our purpose we shall only need the dynamical Benamou-Brenier formula:
\begin{thm}[Benamou-Brenier formula, \cite{BB,AGS}]
 \label{theo:benamou_brenier}
There holds
\begin{equation}
\label{eq:benamou_brenier_formula}
\W^2(\rho_0,\rho_1)=\min\limits_{(\rho,\vv)\in\mathcal A_\W[\rho_0,\rho_1]}\int_0^1\int_\Omega|\vv_t|^2\rd\rho_t\rd t,
\end{equation}
where the admissible set $\mathcal A_\W[\rho_0,\rho_1]$ consists in curves $(0,1)\ni t\mapsto (\rho_t,\vv_t)\in \M^+\times \M(\Omega;\R^d)$ such that $t\mapsto\rho_t$ is narrowly continuous with endpoints $\rho_t(0)=\rho_0$, $\rho_t(1)=\rho_1$ and solving the continuity equation
$$
\partial_t\rho_t+\dive(\rho_t\vv_t)=0
$$
in the sense of distributions $\mathcal D'((0,1)\times\Omega)$.
\end{thm}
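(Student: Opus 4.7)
The plan is to prove the two inequalities $\W^2(\rho_0,\rho_1) \leq \inf (\ldots)$ and $\W^2(\rho_0,\rho_1) \geq \inf (\ldots)$ separately, and then verify that the infimum is attained, which is the standard strategy for \eqref{eq:benamou_brenier_formula}.

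For the upper bound, I would construct an explicit admissible competitor via McCann's displacement interpolation. Given an optimal transport plan $\pi$ between $\rho_0$ and $\rho_1$ for the quadratic cost, set $\rho_t := (e_t)_\# \pi$ where $e_t(x,y) := (1-t)x + ty$ is the geodesic evaluation map. A formal differentiation in $t$ suggests the velocity field $\vv_t$ defined (a.e.\ w.r.t.\ $\rho_t$) by $\vv_t((1-t)x+ty) = y-x$, or equivalently by disintegration of $\pi$. One then checks that $(\rho_t,\vv_t)\in\A_\W[\rho_0,\rho_1]$ solves the continuity equation, and by a direct computation
$$
\int_0^1\int_\Omega |\vv_t|^2 \rd\rho_t \rd t = \int_0^1 \int_{\Omega\times\Omega}|y-x|^2 \rd\pi(x,y)\rd t = \W^2(\rho_0,\rho_1),
$$
so the infimum in \eqref{eq:benamou_brenier_formula} is bounded above by $\W^2(\rho_0,\rho_1)$ and is actually attained along this geodesic.

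For the lower bound, I would invoke the superposition principle of Ambrosio: for any admissible pair $(\rho_t,\vv_t)\in \A_\W[\rho_0,\rho_1]$ with finite kinetic action, there exists a probability measure $\eta$ on $C([0,1];\Omega)$ concentrated on absolutely continuous curves $\gamma$ satisfying the ODE $\dot\gamma(t)=\vv_t(\gamma(t))$ almost everywhere, and such that $\rho_t=(e_t)_\#\eta$ with $e_t(\gamma):=\gamma(t)$. The pushforward $(e_0,e_1)_\#\eta$ is then a transport plan between $\rho_0$ and $\rho_1$, and combining the definition of $\W^2$, the fundamental theorem of calculus, Jensen's inequality, and the superposition identity yields
$$
\W^2(\rho_0,\rho_1) \leq \int |\gamma(1)-\gamma(0)|^2 \rd\eta(\gamma) \leq \int\int_0^1 |\dot\gamma(t)|^2 \rd t\,\rd\eta(\gamma) = \int_0^1\int_\Omega |\vv_t|^2 \rd\rho_t\rd t.
$$
Taking the infimum over admissible pairs closes the estimate.

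The main obstacle is not the inequalities themselves but the technical setup underlying them: first, verifying that the displacement interpolant is indeed narrowly continuous and that $\vv_t$ is well-defined and square-integrable against $\rho_t$ (which requires disintegrating the optimal plan and a small measure-theoretic argument when $\rho_0$ is not absolutely continuous); second, justifying the superposition principle, which is nontrivial for general Radon measures and low-regularity vector fields, and requires an approximation/compactness argument in the space of probability measures on paths. Existence of a minimizer then follows from the direct method: the action is convex and weakly lower semicontinuous in the natural variables $(\rho,\vv\rho)$, the admissible set is weakly closed under the continuity equation, and the upper bound above gives a finite competitor, so any minimizing sequence admits a converging subsequence.
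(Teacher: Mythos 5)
The paper does not prove Theorem~\ref{theo:benamou_brenier}: it is stated as a known result and cited from Benamou--Brenier and Ambrosio--Gigli--Savar\'e. Your outline reproduces, essentially verbatim, the proof given in \cite{AGS} (Chapter~8): displacement interpolation gives the upper bound and attainment, while Ambrosio's superposition principle together with the fundamental theorem of calculus and Jensen's inequality gives the lower bound. The sketch is correct. Two small remarks. First, the convexity of $\Omega$ (assumed in the paper's standing hypotheses) is what guarantees that the interpolant $(e_t)_\#\pi$ remains supported in $\Omega$, so it is worth flagging that the upper-bound construction relies on it. Second, the closing direct-method paragraph is redundant: once you have observed that the displacement geodesic is admissible and achieves the value $\W^2(\rho_0,\rho_1)$, attainment of the minimum is already established, so the compactness argument is not needed.
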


According to the original definition in \cite{CPSV} we have
\begin{de}
\label{def:KFR}
The Wasserstein-Fisher-Rao distance between $\rho_0, \rho_1 \in \M^+(\Omega)$ is 
\begin{equation}
\label{eq:def_KFR}
\KFR^2(\rho_0,\rho_1):=\inf_{(\rho,\vv,r) \in \A_{\KFR}[\rho_0,\rho_1]} \int_0^1 \int_\Omega (|\vv_t(x)|^2+|r_t|^2)\,d\rho_t(x)dt, 
\end{equation}
where the admissible set $\A_{\KFR}[\rho_0,\rho_1]$ is the set of curves $t \in [0,1] \mapsto (\rho_t,v_t,r_t) \in \M^+\times \M(\Omega;\R^d) \times \M$ such that $t\mapsto \rho_t \in \mathcal{C}_w([0,1],\M^+)$ is narrowly continuous with endpoints $\rho_{|t=0}=\rho_0$, $\rho_{|t=1}=\rho_1$ and solves the continuity equation with source
$$ \partial_t \rho_t +\dive(\rho_t v_t) = \rho_t r_t.$$
\end{de}

Comparing definition \ref{def:KFR} with definition \ref{def:FR} and Theorem~\ref{theo:benamou_brenier}, this dynamical formulation {\it \`a la Benamou-Brenier} shows that the $\KFR$ distance can be viewed as an inf-convolution of the Wasserstein and Fisher-Rao distances $\W,\FR$.
From \cite{CPSV,CPSV1,KMV,LMS_big_15} the infimum in \eqref{eq:def_KFR} is always a minimum, and the corresponding minimizing curves $t\mapsto\rho_t$ are of course constant-speed geodesics $\KFR(\rho_t,\rho_s)=|t-s|\KFR(\rho_0,\rho_1)$.
Then $(\M^+,\KFR)$ is a complete metric space, and $\KFR$ metrizes the narrow convergences of measures (see again \cite{CPSV,CPSV1,KMV,LMS_big_15}).
Interestingly, there are other possible formulations of the distance in terms of static unbalanced optimal transportation, primal-dual characterizations with relaxed marginals, lifting to probability measures on a cone over $\Omega$, duality with subsolutions of Hamilton-Jacobi equations, and we refer to \cite{CPSV,CPSV1,KMV,LMS_small_15,LMS_big_15} for more details.

As a first useful interplay between the distances $\KFR,\W,\FR$ we have
\begin{prop}[\cite{GM}]
\label{prop:comparison_d_W_H}
Let $\rho_0,\rho_1\in \M^+_2$ such that $|\rho_0|=|\rho_1|$. Then
$$
\KFR^2(\rho_0,\rho_1)\leqslant \W^2(\rho_0,\rho_1).
$$
Similarly for all $\mu_0,\mu_1\in \M^+$ (with possibly different masses) there holds
$$
\KFR^2(\mu_0,\mu_1)\leqslant \FR^2(\mu_0,\mu_1).
$$
Finally, for all $\nu_0,\nu_1\in \M^+_2$ such that $|\nu_0|=|\nu_1|$ and all $\nu\in \M^+$, there holds
$$
\KFR^2(\nu_0,\nu)\leqslant 2(\W^2(\nu_0,\nu_1)+\FR^2(\nu_1,\nu)).
$$
\end{prop}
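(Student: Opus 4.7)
The overall strategy is to exploit the shared dynamical structure: every admissible pair for $\W$ (resp.\ $\FR$) yields an admissible triple for $\KFR$ by setting $r\equiv 0$ (resp.\ $v\equiv 0$), and the third bound follows by concatenating time-reparametrized geodesics for $\W$ and $\FR$.

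For the first inequality, I start from the Benamou-Brenier formula (Theorem~\ref{theo:benamou_brenier}): let $(\rho_t,\vv_t)\in\A_{\W}[\rho_0,\rho_1]$ be a minimizer so that $\int_0^1\int_\Omega|\vv_t|^2\rd\rho_t\rd t=\W^2(\rho_0,\rho_1)$, and set $r_t\equiv 0$. Then $\partial_t\rho_t+\dive(\rho_t\vv_t)=0=\rho_t r_t$, so $(\rho_t,\vv_t,r_t)\in\A_{\KFR}[\rho_0,\rho_1]$ and plugging into \eqref{eq:def_KFR} gives $\KFR^2(\rho_0,\rho_1)\leqslant \W^2(\rho_0,\rho_1)$. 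The second inequality is completely symmetric: take an $\FR$-minimizing pair $(\rho_t,r_t)\in\A_{\FR}[\mu_0,\mu_1]$ and set $\vv_t\equiv 0$, so that $\partial_t\rho_t+\dive(0)=\rho_tr_t$ and the triple is admissible for $\KFR$ with cost exactly $\FR^2(\mu_0,\mu_1)$.

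The third inequality is the least automatic part and is the one where care is needed. The idea is to glue together a Wasserstein geodesic from $\nu_0$ to $\nu_1$ on $[0,1/2]$ and a Fisher-Rao geodesic from $\nu_1$ to $\nu$ on $[1/2,1]$, after suitably speeding each up by a factor of $2$. Concretely, if $(\rho^{\W}_t,\vv_t)\in\A_{\W}[\nu_0,\nu_1]$ realizes $\W^2(\nu_0,\nu_1)$, set
$$
\tilde\rho_s:=\rho^{\W}_{2s},\qquad \tilde\vv_s:=2\vv_{2s},\qquad \tilde r_s:=0,\qquad s\in[0,1/2],
$$
which solves $\partial_s\tilde\rho_s+\dive(\tilde\rho_s\tilde\vv_s)=\tilde\rho_s\tilde r_s$ and has a kinetic cost $\int_0^{1/2}\int|\tilde\vv_s|^2\rd\tilde\rho_s\rd s=2\W^2(\nu_0,\nu_1)$ by the change of variable $t=2s$. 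Analogously, if $(\rho^{\FR}_t,r_t)\in\A_{\FR}[\nu_1,\nu]$ realizes $\FR^2(\nu_1,\nu)$, set
$$
\tilde\rho_s:=\rho^{\FR}_{2s-1},\qquad \tilde\vv_s:=0,\qquad \tilde r_s:=2r_{2s-1},\qquad s\in[1/2,1],
$$
with reaction cost $2\FR^2(\nu_1,\nu)$. Gluing these two pieces at $s=1/2$ produces a narrowly continuous curve in $\A_{\KFR}[\nu_0,\nu]$ whose total $\KFR$ action is $2\W^2(\nu_0,\nu_1)+2\FR^2(\nu_1,\nu)$, and the claimed inequality follows from the definition \eqref{eq:def_KFR}.

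The only technical point one must check carefully is that this concatenation is genuinely an admissible curve for $\KFR$ in the sense of Definition~\ref{def:KFR}: narrow continuity at the junction $s=1/2$ is automatic since both pieces equal $\nu_1$ there, and the continuity equation with source is satisfied in $\D'((0,1)\times\Omega)$ because it is satisfied on each half and the boundary values match. The factor $2$ in the final estimate is intrinsic to this splitting and cannot be improved by this elementary argument, mirroring the standard inequality $(a+b)^2\leqslant 2(a^2+b^2)$; a sharper bound would require exploiting the true inf-convolution structure of $\KFR$, which is not needed here.
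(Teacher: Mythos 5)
Your proof is correct. Note that the paper itself does not prove this proposition but imports it from~\cite{GM}, so there is no ``paper's own proof'' to compare against; still, the argument you give is the natural one and each step checks out. For the first two bounds, restricting the $\KFR$ action to triples with $r\equiv 0$ (resp.\ $\vv\equiv 0$) is exactly the right way to exploit the inf-convolution structure, and the change-of-variable computation for the reparametrized concatenation in the third bound is arithmetic-correct: speeding up by a factor $2$ multiplies the action by $4$ and halves the time interval, yielding the factor $2$. The gluing at $s=1/2$ is indeed admissible since narrow continuity holds at the junction and the source/flux terms are integrable on each half, so the continuity equation with source holds in $\mathcal{D}'((0,1)\times\Omega)$ by the usual pasting lemma.

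One remark: once the first two inequalities are established, the third follows more cheaply from the fact that $\KFR$ is a metric, via the triangle inequality
$\KFR(\nu_0,\nu)\leqslant \KFR(\nu_0,\nu_1)+\KFR(\nu_1,\nu)\leqslant \W(\nu_0,\nu_1)+\FR(\nu_1,\nu)$
and then $(a+b)^2\leqslant 2(a^2+b^2)$. This avoids the (minor but real) technicality of verifying admissibility of the concatenated curve, and is what your factor-$2$ remark at the end is implicitly pointing at; it gives exactly the same constant and is logically equivalent to your construction. Your explicit construction has the advantage of being self-contained and not invoking the metric axioms of $\KFR$ (which themselves have to be established from the definition), so neither route is strictly better, but the triangle-inequality shortcut is worth knowing.
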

Moreover, we have the following link between the reaction and the velocity in \eqref{eq:def_KFR}, which was the original definition in \cite{KMV}:
\begin{prop}[\cite{GM}]
\label{prop:coupling=uncoupling_d}
The definition \eqref{def:KFR} of the $\KFR$ distance can be restricted to the subclass of admissible paths $(\vv_t,r_t)=(\nabla u_t,u_t)$  for potentials $u_t\in H^1(\rd\rho_t)$ and continuity equations
$$
\partial_t\rho_t+\dive(\rho_t\nabla u_t)=\rho_t u_t.
$$
\end{prop}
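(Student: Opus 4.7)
The plan is to show that any admissible triple $(\rho_t,\vv_t,r_t) \in \A_{\KFR}[\rho_0,\rho_1]$ with finite action can be replaced by one of the restricted ``gradient-potential'' form $(\rho_t,\nabla u_t,u_t)$ whose action is no larger. Since the restricted class is trivially a subset of $\A_{\KFR}[\rho_0,\rho_1]$, this will force the infima to coincide. By existence of a $\KFR$-minimizer recorded after Definition~\ref{def:KFR}, it is enough to treat a single admissible path.

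For almost every $t \in (0,1)$ the finiteness of the action gives $(\vv_t,r_t) \in L^2(\rho_t;\R^d) \times L^2(\rho_t)$. At such $t$, I would introduce the closed subspace
\[
V_t := \overline{\bigl\{(\nabla\phi,\phi) : \phi \in C^\infty_c(\Omega)\bigr\}} \subset L^2(\rho_t;\R^d) \times L^2(\rho_t),
\]
which is precisely the set $\{(\nabla u,u) : u \in H^1(d\rho_t)\}$ (one may take this as a definition of the weighted Sobolev space). Denote by $(\nabla u_t,u_t)$ the orthogonal projection of $(\vv_t,r_t)$ onto $V_t$, so that the residual $(\vv_t',r_t') := (\vv_t - \nabla u_t,\, r_t - u_t)$ lies in $V_t^\perp$.

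The key observation is a characterization of $V_t^\perp$: a pair $(\vv',r')$ belongs to it iff
\[
\int_\Omega \bigl(\vv' \cdot \nabla\phi + r'\phi\bigr)\, d\rho_t \;=\; 0 \qquad \forall\,\phi \in C^\infty_c(\Omega),
\]
which is exactly the distributional identity $-\dive(\rho_t\vv') + \rho_t r' = 0$ in $\D'(\Omega)$. Hence elements of $V_t^\perp$ produce no infinitesimal variation of $\rho_t$. Testing this identity against $\psi(t,\cdot)$ for $\psi \in C^\infty_c((0,1) \times \Omega)$ and integrating in $t$ shows that subtracting $(\vv_t',r_t')$ preserves the space-time continuity equation with source, so $(\rho_t,\nabla u_t,u_t)$ remains admissible. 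By Pythagoras,
\[
\int_0^1 \!\!\int_\Omega \bigl(|\nabla u_t|^2 + u_t^2\bigr)\, d\rho_t\, dt \;\leq\; \int_0^1 \!\!\int_\Omega \bigl(|\vv_t|^2 + r_t^2\bigr)\, d\rho_t\, dt,
\]
which is the sought action inequality.

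I expect the main technical obstacle to be the joint measurability in $(t,x)$ of the projected field $(\nabla u_t,u_t)$, built from a pointwise-in-$t$ Hilbert projection whose ambient space itself varies with $t$ through $\rho_t$. I would handle this by approximation: take $(\vv,r)$ smooth in both variables by mollification (or extract a minimizing sequence in the restricted class), perform the projection on the regularized data where measurability is automatic, and pass to the limit using lower semicontinuity of the quadratic action and linearity of the continuity equation. The remaining technicalities (narrow continuity of $t \mapsto \rho_t$, density of $C^\infty_c(\Omega)$ in $H^1(d\rho_t)$ when $\rho_t$ degenerates) are standard in the Wasserstein-Fisher-Rao literature cited in the excerpt.
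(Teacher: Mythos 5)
Your proposal is correct and follows the standard Hilbert-space projection argument: at a.e.\ time $t$, decompose $(\vv_t,r_t)$ orthogonally in $L^2(\rho_t;\R^d)\times L^2(\rho_t)$ relative to the closure of $\{(\nabla\phi,\phi):\phi\in C^\infty_c\}$, note that the orthogonal complement is exactly the kernel of $(\vv',r')\mapsto -\dive(\rho_t\vv')+\rho_t r'$ in $\D'(\Omega)$ and hence does not affect the source-continuity equation, and conclude by Pythagoras. This is the natural extension of the Wasserstein-case tangent/cotangent identification (cf.\ [AGS, Lemma 8.4.2]) and is precisely the mechanism used in the cited reference [GM] and in the earlier construction of Kondratyev–Monsaingeon–Vorotnikov; the paper itself offers no proof beyond the citation, so there is nothing to compare in detail. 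You correctly flag the only genuinely delicate point, namely the joint $(t,x)$-measurability of the time-dependent projections, and the mollification/lower-semicontinuity remedy you sketch is the accepted fix.
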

This shows that $(\M^+,\KFR)$ can be endowed with the formal Riemannian structure constructed as follow: any two tangent vectors $\xi^1=\partial_t\rho^1,\xi^2=\partial_t\rho^2$ can be uniquely identified with potentials $u^i$ by solving the elliptic equations
$$
\xi^i=-\dive(\rho\nabla u^i)+\rho u^i.
$$
Then the Riemaniann tensor is naturally constructed on the $H^1(\rd\rho)$ scalar product, i-e
$$
g_\rho(\xi^1,\xi^2):=\langle u^1,u^2\rangle_{H^1(\rd\rho)}=\int_{\Omega}(\nabla u^1\cdot\nabla u^2+ u^1 u^2)\rd\rho.
$$
This is purely formal, and we refer again to \cite{GM} for discussions.
Given a functional
$$
\F(\rho):=\int_\Omega F(\rho)+\int_\Omega \rho V +\frac{1}{2}\int_{\Omega}(K\ast \rho)\rho,
$$
this Riemannian structure also allows to compute $\KFR$ gradients as
$$
\grad_{\KFR}\F(\rho)
=-\dive\left(\rho\nabla \frac{\delta\F}{\delta\rho}\right)+\rho \frac{\delta\F}{\delta\rho}
=\grad_{\W}\F(\rho)+\grad_\FR\F(\rho),
$$
where $\frac{\delta\F}{\delta\rho}=F'(\rho)+V+K\ast\rho$ denotes the Euclidean first variation of $\F$ with respect to $\rho$.
In other words, the Riemannian tangent vector $\grad_\KFR\F(\rho)$ is represented in the previous $H^1(\rd\rho)$ duality by the scalar potential $u=\frac{\delta\F}{\delta\rho}$.

\section{An existence result for general parabolic equations}
\label{sec:KFRsplitting}
In this section, we propose to solve scalar parabolic equations of the form
\begin{equation}
\label{eq:KFR-general equation}
\left\{
\begin{array}{l}
\partial_t \rho = \dive(\rho \nabla (F'_1(\rho) +V_1)) -\rho( F'_2(\rho)+V_2) \\
\rho|_{t=0}=\rho_0\\
\left.{}_{}\rho \nabla (F'_1(\rho) +V_1)\right|_{\partial\Omega}\cdot \nu =0
\end{array}
\right.
\end{equation}
in a bounded domain $\Omega \subset \Rn$ with Neumann boundary condition and suitable initial conditions. 
Our goal is to extend to the case $F_1\neq F_2,V_1\neq V_2$ the method initially introduced in \cite{GM} for variational $\KFR$-gradient flows, i-e \eqref{eq:KFR-general equation} with $F_1=F_2$ and $V_1=V_2$.

We assume for simplicity that $F_1 \, : \,  \R \rightarrow \R$ is given by
\begin{eqnarray}
\label{assumption Diffusion}
F_1(z)=\left\{
\begin{array}{ll}
z\log z -z & \mbox{(linear diffusion)}\\
\mbox{or}\\
\frac{1}{m_1-1}z^{m_1} & \mbox{(Porous Media diffusion)}
\end{array}
\right.,
\end{eqnarray}

and $F_2\, : \, \R \rightarrow \R$ is given by
\begin{eqnarray}
\label{assumption reaction}
F_2(z) = \frac{1}{m_2-1}z^{m_2}, \qquad\mbox{for some } m_2>1.
\end{eqnarray}
Note that we cannot take $F_2(z)=z\log z-z$ because the Boltzmann entropy is not well behaved (neither regular nor convex) with respect to the Fisher-Rao metric in the reaction step, see \cite{GM,LMS_small_15,LMS_big_15} for discussions.
In addition, we assume that
$$
V_1 \in W^{1,\infty}(\Omega)
\qquad\mbox{and}\qquad
V_2 \in L^\infty(\Omega).
$$
We denote $\E_1, \E_2 \, :\, \M^+ \rightarrow \R$ the energy functionals
$$
\E_i(\rho) := \F_i(\rho) + \V_i(\rho),
$$
where
$$
\F_i(\rho):= \left\{ \begin{array}{ll}
\int_\Omega F_i(\rho) & \text{    if } \rho \ll \mathcal{L}_{|\Omega}\\
+\infty & \text{    otherwise, }
\end{array}\right.
\qquad
\text{and}
\qquad
\V_i(\rho) := \int_\Omega V_i \rho.
$$
 Although more general statements with suitable structural assumptions could certainly be proved, we do not seek full generality here and choose to restrict from the beginning to the above simple (but nontrivial) setting for the sake of exposition.

\begin{de}
\label{def:weak_solutions}
A weak solution of \eqref{eq:KFR-general equation} is a curve $ [0, +\infty) \ni t\mapsto \rho(t,\cdot) \in L^1_+\cap L^\infty(\Omega)$ such that for all $T<\infty$ the pressure $P_1(\rho):=\rho F_1'(\rho)-F_1(\rho)$ satisfies $\nabla P_1(\rho) \in L^2([0,T] \times \Omega)$, and
$$
 \int_0^{+\infty}\left( \int_\Omega (\rho \partial_t \phi -\nabla V_1 \cdot \nabla \phi \rho - \nabla P_1(\rho) \cdot \nabla \phi - \rho (F_2'(\rho) +V_2) \phi) \,dx \right)\, dt = -\int_\Omega \phi(0,x) \rho_0(x) \,dx
$$
for every $\phi \in \C^\infty_c([0,+\infty) \times \Rn)$.
\end{de}
\noindent
Note that the pressure $P_1$ is defined so that the diffusion term $\dive(\rho\nabla F_1'(\rho))=\Delta P_1(\rho)$, at least for smooth solutions.\\

The starting point of our analysis is that \eqref{eq:KFR-general equation} can be written, at least formally as,
$$
\partial_t \rho = \dive(\rho \nabla (F'_1(\rho) +V_1)) -\rho( F'_2(\rho)+V_2)
\quad \leftrightarrow\quad 
\partial_t\rho=-\grad_{\W}\mathcal E_1(\rho)-\grad_{\FR}\mathcal E_2(\rho).
$$
Our splitting scheme is a variant of that originally introduced in \cite{GM}, and can be viewed as an operator splitting method: each part of the PDE above is discretized (in time) in its own $\W,\FR$ metric, and corresponds respectively to a $\W$/transport/diffusion step and to a $\FR$/reaction step.
More precisely, let $h>0$ be a small time step.
Starting from the initial datum $ \rho_h^0 :=\rho_0$, we construct two recursive sequences $(\rho_h^k)_k$ and $(\rho_h^{k+1/2})_k$ such that
\begin{eqnarray}
 \label{splitting scheme}
\left\{\begin{array}{l}
\rho_h^{k+1/2} \in \argmin\limits_{\rho \in \M^+, |\rho|=|\rho_h^k|} \left\{ \frac{1}{2h} \W^2(\rho, \rho_h^k) + \E_1(\rho ) \right\},\\
\\
\rho_h^{k+1} \in \argmin\limits_{\rho \in \M^+} \left\{ \frac{1}{2h} \FR_2^2(\rho, \rho_h^{k+1/2}) + \E_2(\rho ) \right\}.
\end{array}\right.
\end{eqnarray}

With our structural assumptions on $F_i,V_i$ and arguing as in \cite{GM}, the direct method shows that this scheme is well-posed, i-e that each minimizing problem in \eqref{splitting scheme} admits a unique minimizer.
We construct next two piecewise-constant interpolating curves
\begin{eqnarray}
\label{piecewise-constant curves}
\left\{\begin{array}{l}
\rho_h(t) = \rho_h^{k+1},\\
\trho_h(t) = \rho_h^{k + 1/2},
\end{array}\right.
 \text{  for all }t\in (kh, (k+1)h].
\end{eqnarray}

Our main results in this section is the constructive existence of weak solutions to \eqref{eq:KFR-general equation}:
\begin{thm}
\label{theo:existence differente energies}
Assume that $\rho_0 \in L^1_+\cap L^\infty(\Omega)$.
Then, up to a discrete subsequence (still denoted $h\to 0$ and not relabeled here), $\rho_h$ and $\trho_h$ converge strongly in $L^1((0,T)\times \Omega)$ to a 
weak solution $\rho$ of \eqref{eq:KFR-general equation}.
\end{thm}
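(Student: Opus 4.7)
The plan is to follow the standard minimizing-movement strategy adapted to the two-step splitting. Testing each of the two problems in \eqref{splitting scheme} against the incoming iterate gives the dissipation inequalities
\begin{equation*}
\tfrac{1}{2h}\W^2(\rho_h^{k+1/2},\rho_h^k)+\E_1(\rho_h^{k+1/2}) \leqslant \E_1(\rho_h^k),\qquad
\tfrac{1}{2h}\FR^2(\rho_h^{k+1},\rho_h^{k+1/2})+\E_2(\rho_h^{k+1}) \leqslant \E_2(\rho_h^{k+1/2}),
\end{equation*}
which do not telescope directly because each step modifies both energies. Solving the $\FR$ step explicitly yields a pointwise optimality condition of the form $\sqrt{\rho_h^{k+1/2}}=\sqrt{\rho_h^{k+1}}\bigl(1+\tfrac{h}{2}(F_2'(\rho_h^{k+1})+V_2)\bigr)$, which both propagates $L^\infty$ bounds and shows $\|\rho_h^{k+1}-\rho_h^{k+1/2}\|_{L^1}=O(h)$ per step. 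Combined with the $L^\infty$ and mass propagation of the $\W$ step (standard for convex $F_1$) and the boundedness of $V_1,V_2$, this lets one absorb the mismatch between the two energies and sum the dissipations. One obtains uniform bounds on $\|\rho_h^k\|_{L^\infty}$, on the total masses, and on $\sum_k \bigl[\W^2(\rho_h^{k+1/2},\rho_h^k)+\FR^2(\rho_h^{k+1},\rho_h^{k+1/2})\bigr]/h$; via Proposition~\ref{prop:comparison_d_W_H} the latter delivers a $\KFR$-H\"older-$1/2$ estimate for the interpolant.

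Next I would extract a spatial Sobolev estimate on the pressure. A flow interchange in the spirit of Matthes--McCann--Savar\'e, or equivalently perturbing the $\W$ minimizer along the flow of an arbitrary smooth vector field, yields a discrete Euler--Lagrange identity characterizing the optimal map $T_h^k$ from $\rho_h^{k+1/2}$ to $\rho_h^k$ in terms of $\nabla(F_1'(\rho_h^{k+1/2})+V_1)$; summing its square against the $\F_1$-dissipation produces $\int_0^T\|\nabla P_1(\trho_h)\|_{L^2}^2\rd t \leqslant C$. Together with the $L^\infty$ bound and the $\KFR$-H\"older time estimate---which upgrades to narrow equicontinuity since $\KFR$ metrizes narrow convergence---a measure-valued Aubin--Lions argument in the spirit of Rossi--Savar\'e gives strong $L^1((0,T)\times\Omega)$ relative compactness of $\rho_h$. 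The $O(h)$ bound between $\rho_h$ and $\trho_h$ obtained above forces $\trho_h$ to share the same limit $\rho$, and the $L^\infty$ bound upgrades strong $L^1$ convergence to $L^p$ for every $p<\infty$.

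To pass to the limit I would sum the discrete Euler--Lagrange equations from both steps, tested against $\phi\in\C^\infty_c([0,+\infty)\times\Rn)$: the $\W$ step contributes a discrete continuity equation with velocity $-\nabla(F_1'(\trho_h)+V_1)$ tested against $\nabla\phi$, and the $\FR$ step a discrete reaction $(\rho_h^{k+1}-\rho_h^{k+1/2})/h = -\rho_h^{k+1}(F_2'(\rho_h^{k+1})+V_2)+O(h)$ tested against $\phi$. The telescoping time differences converge to $-\int\rho\,\partial_t\phi-\int\phi(0,\cdot)\rho_0$, strong $L^p$ convergence handles the nonlinear $F_2'(\rho)$ and the $V_i\rho$ terms, and weak $L^2$ convergence of $\nabla P_1(\trho_h)$ tested against strong convergence of $\nabla\phi$ handles the diffusion, yielding the weak formulation of Definition~\ref{def:weak_solutions}.

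The main obstacle I anticipate is precisely the coupling of the two dissipation inequalities into genuinely uniform a priori bounds: because $\E_1$ is not monotone along the $\FR$ step and $\E_2$ is not monotone along the $\W$ step, the natural telescoping is broken, and one must carefully leverage the pointwise $\FR$ optimality condition together with the cross-comparisons of Proposition~\ref{prop:comparison_d_W_H} and the boundedness of $V_1,V_2$ to control the cross-terms. Once uniform mass, $L^\infty$, and the spatial gradient bound on $P_1(\trho_h)$ are secured, the compactness and limit-passing steps are essentially standard.
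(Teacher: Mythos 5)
Your proposal follows essentially the same route as the paper: test each step against its incoming iterate, observe that the two resulting dissipation inequalities do not telescope, use the pointwise Fisher--Rao optimality condition $\sqrt{\rho_h^{k+1/2}}=\sqrt{\rho_h^{k+1}}\bigl(1+\tfrac h2(F_2'(\rho_h^{k+1})+V_2)\bigr)$ to propagate $L^\infty$ bounds and to derive the two-sided comparison $(1-ch)\rho_h^{k+1/2}\leq\rho_h^{k+1}\leq(1+Ch)\rho_h^{k+1/2}$, from which one controls the cross-dissipations $\F_1(\rho_h^{k+1})-\F_1(\rho_h^{k+1/2})\leq C_Th$ and $\F_2(\rho_h^{k+1/2})-\F_2(\rho_h^{k+1})\leq C_Th$ (this is exactly Proposition~\ref{prop:decr F1}), sum to get the $\KFR$ total-square-distance estimate via Proposition~\ref{prop:comparison_d_W_H}, derive the $L^2_tH^1_x$ bound on $P_1(\trho_h)$ from the Wasserstein Euler--Lagrange identity~\eqref{opt Wasserstein}, apply Rossi--Savar\'e, and pass to the limit. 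Two small imprecisions, neither fatal: the paper obtains the pressure bound directly from the backward Kantorovich potential and the total-square $\W$-distance estimate, not by flow interchange (which is reserved for Section~\ref{part4-section2HS}; it is not ``equivalent'' to the vector-field perturbation, but either works); and the cross-energy control comes from the pointwise $(1\pm Ch)$ comparison and the $L^\infty$ bounds rather than from Proposition~\ref{prop:comparison_d_W_H}, which only serves to combine the separate $\W$ and $\FR$ bounds into a $\KFR$ bound.
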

Note that any uniqueness for \eqref{eq:KFR-general equation} would imply convergence of the whole (continuous) sequence $\rho_h,\trho_h\to \rho$ as $h\to 0$, but for the sake of simplicity we shall not address this issue here.

The main technical obstacle in the proof of Theorem \ref{theo:existence differente energies} is to retrieve compactness in time. 
For the classical minimizing scheme of any energy $\E$ on any metric space $(X,d)$, suitable time compactness is usually retrieved in the form of the \emph{total-square distance estimate}
$\frac{1}{2h}\sum\limits_{k\geq 0}d^2(x^k,x^{k+1})\leqslant \E(x_0)-\inf \E$.
This usually works because only one functional is involved, and $\E(x_0)-\inf\E$ is obtained as a telescopic sum of one-step energy dissipations $\E(x^{k+1})-\E(x^k)$.
Here each of our elementary step in \eqref{eq:KFR-general equation} involves one of the $\W,\FR$ metrics, and we will use the $\KFR$ distance to control both simultaneously: this strongly leverages the inf-convolution structure, the $\KFR$ distance being precisely built on a compromise between $\W$/transport and $\FR$/reaction. 
On the other hand we also have two different functionals $\E_1,\E_2$, and we will have to carefully estimate the dissipation of $\E_1$ during the $\FR$ reaction step (driven by $\E_2$) as well as the dissipation of $\E_2$ during the $\W$ transport/diffusion step (driven by $\E_1$).

We start by collecting one-step estimates, exploiting the optimality conditions for each elementary minimization procedure, and postpone the proof of Theorem~\ref{theo:existence differente energies} to the end of the section.

\subsection{Optimality conditions and pointwise $L^\infty$ estimates}

The optimality conditions for the first Wasserstein step $\rho^k\to\rho^{k+1/2}$ in \eqref{splitting scheme} are by now classical, and can be written for example

\begin{eqnarray}
\label{opt Wasserstein}
\frac{-\nabla \varphi_h^{k+1/2}}{h} \rho_h^{k+1/2} = \nabla P_1(\rho_h^{k+1/2}) + \rho_h^{k+1/2} \nabla V_1\qquad \mbox{a.e.}
\end{eqnarray}
Here $\varphi_h^{k+1/2}$ is an optimal (backward) Kantorovich potential from $\rho_h^{k+1/2}$ to $\rho_h^{k}$.

\begin{lem}
\label{lem:maximum_principle_W_step}
For all $k \geqslant 0$,
 \begin{equation}
 \label{eq: mass-conservation W}
 \|\rho_h^{k+1/2}\|_{L^1}=\|\rho_h^k\|_{L^1}
 \end{equation}
 and for all constant $C$ such that $V_1 \leqslant C$, 
 \begin{equation}
 \label{eq:PMAX_W}
\rho_h^k (x) \leqslant (F_1')^{-1}(C -V_1(x))\, \mbox{a.e.} 
\qquad
\Rightarrow \qquad
\rho_h^{k+1/2} (x) \leqslant (F_1')^{-1}(C -V_1(x)) \, \mbox{a.e.}
 \end{equation}

\end{lem}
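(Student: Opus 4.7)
The mass-conservation assertion \eqref{eq: mass-conservation W} is immediate from the variational problem itself: the admissible class in the $\W$-step is explicitly restricted to $\rho\in\M^+$ with $|\rho|=|\rho_h^k|$, so any minimizer automatically satisfies $|\rho_h^{k+1/2}|=|\rho_h^k|$.

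For the $L^\infty$ bound \eqref{eq:PMAX_W}, the plan is to argue by contradiction using a truncate-and-redistribute competitor. Fix $C$ with $V_1\leqslant C$, set $M(x):=(F_1')^{-1}(C-V_1(x))$, and assume $\rho_h^k\leqslant M$ a.e. Suppose for contradiction that $A:=\{\rho_h^{k+1/2}>M\}$ has positive Lebesgue measure. Let $T$ denote the optimal Brenier map pushing $\rho_h^{k+1/2}$ onto $\rho_h^k$ (which exists since the energy $\F_1$ forces the minimizer to be absolutely continuous) and build a competitor $\tilde\rho$ at the level of transport plans: start from $\boldsymbol\pi=(\id,T)_\#\rho_h^{k+1/2}$ and replace the piece associated with the excess mass $(\rho_h^{k+1/2}-M)\mathbf{1}_A$ by its diagonal version, i.e.\ leave this excess at the target point $T(x)$ rather than transporting it from $x\in A$. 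The modified plan still has $\rho_h^k$ as second marginal, and its first marginal $\tilde\rho$ lies in $\M^+$ with $|\tilde\rho|=|\rho_h^{k+1/2}|$, hence is admissible. The strategy is then to combine two inequalities: the Wasserstein cost decreases strictly since the redistributed mass travels a shorter Euclidean distance, $\W^2(\tilde\rho,\rho_h^k)<\W^2(\rho_h^{k+1/2},\rho_h^k)$; while the convexity of $F_1$ together with the identity $F_1'(M)+V_1=C$ forces $\mathcal E_1(\tilde\rho)\leqslant\mathcal E_1(\rho_h^{k+1/2})$ — the cap at $M$ on $A$ strictly decreases $\mathcal E_1$ (since $F_1'(\rho)+V_1>C$ there), while the redistribution at points where $F_1'(\rho)+V_1\leqslant C$ adds marginal cost at most $C$. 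Summing these two contradicts the minimality of $\rho_h^{k+1/2}$.

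The delicate point is the case $A\cap T(A)\neq\emptyset$, where the excess mass is redeposited at points that themselves already exceed $M$; this breaks the ``marginal cost $\leqslant C$'' part of the energy estimate. The standard remedy is either to iterate the cap-and-redistribute procedure, or to work directly with a suitable monotone rearrangement of $\rho_h^{k+1/2}$ against the barrier $M$; both are classical technicalities in JKO-type maximum-principle proofs (compare the arguments in \cite{PQV} for the very degenerate Hele-Shaw limit that is also revisited in Section~\ref{part4-section2HS} of the present paper).
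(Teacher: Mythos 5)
Your treatment of \eqref{eq: mass-conservation W} matches the paper's: the constraint $|\rho|=|\rho_h^k|$ is built into the admissible set of the $\W$-step, so mass conservation is automatic and there is nothing to prove.

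For \eqref{eq:PMAX_W} the paper gives no self-contained argument: it invokes a generalization of Otto's maximum principle, citing \cite[lemma~2]{PT}. You instead attempt a competitor-based argument, which is a reasonable strategy (and indeed the sort of argument that underlies such maximum principles), but there is a genuine gap that you partially see and do not close. Writing $g:=\min(\rho_h^{k+1/2},M)$, $e:=(\rho_h^{k+1/2}-M)^+$ and $\tilde\rho:=g+T_\#e$, convexity of $F_1$ gives, at a point $y$ where mass is added,
\[
F_1(\tilde\rho(y))-F_1(\rho_h^{k+1/2}(y))+V_1(y)\bigl(\tilde\rho(y)-\rho_h^{k+1/2}(y)\bigr)
\leqslant \bigl(F_1'(\tilde\rho(y))+V_1(y)\bigr)\bigl(\tilde\rho(y)-\rho_h^{k+1/2}(y)\bigr),
\]
and to bound the right-hand side by $C\bigl(\tilde\rho(y)-\rho_h^{k+1/2}(y)\bigr)$ you need $\tilde\rho(y)\leqslant M(y)$, not merely the old density $\rho_h^{k+1/2}(y)\leqslant M(y)$. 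Since $T_\#e\leqslant\rho_h^k\leqslant M$ pointwise while $g$ can itself be as large as $M$, the competitor $\tilde\rho=g+T_\#e$ can be as large as $2M$. So the ``marginal cost $\leqslant C$'' bound can fail not only when $T(A)\cap A\neq\emptyset$ (the case you flag) but wherever the redistributed mass pushes the competitor density past the barrier, including on $A^c$. The remedies you sketch are not routine: the naive iteration of truncation-and-redistribution is circular (after one truncation you no longer control $\W^2(\tilde\rho,\rho_h^k)$ by that of the previous iterate in a way that telescopes), and a ``monotone rearrangement against the barrier $M$'' is precisely the nontrivial construction that a proof of this maximum principle must supply, not one that is available off the shelf. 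As written, your argument establishes the Wasserstein side of the contradiction correctly (if $T=\id$ a.e.\ on $A$ then $\rho_h^k\geqslant\rho_h^{k+1/2}>M$ on $A$, which is impossible), but the energy side is incomplete. To make it self-contained you would have to reproduce the argument of \cite[lemma~2]{PT} or carry out a competitor construction in which the excess mass is redistributed so that the new density never exceeds $M$.
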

\begin{proof}
The Wasserstein step is mass conservative by construction, so the first part is obvious.
The second part is a direct consequence of a generalization \cite[lemma 2]{PT} of Otto's maximum principle \cite{O1}. 
\end{proof}

\begin{rem}
Note that if $\rho_h^k \leqslant M$, we may take $C=F_1'(M)+\|V_1\|_{L^\infty}$ in \eqref{eq:PMAX_W}.
Formally, this corresponds to taking $\overline{\rho}(x):=(F_1')^{-1}(C -V_1(x))$ as a stationary Barenblatt supersolution for $\partial_t\rho =\dive(\rho\nabla (F_1'(\rho)+V_1))$ at the continuous level.
In addition, if $V_1\equiv 0$ we recover Otto's maximum principle \cite{O1} in the form $\|\rho^{k+1/2}\|_{L^\infty}\leqslant \|\rho^k\|_{L^\infty}$.

\end{rem}

For the second Fisher-Rao reaction step, the optimality condition has been obtained in \cite[section 4.2]{GM} in the form
\begin{eqnarray}
\label{opt Fisher-Rao}
 \left( \sqrt{\rho_h^{k+1}} - \sqrt{\rho_h^{k+1/2}}\right) \sqrt{\rho_h^{k+1}}= - \frac{h}{2}\rho_h^{k+1} \left( F_2'(\rho_h^{k+1}) + V_2 \right) \qquad \mbox{a.e.}
\end{eqnarray}
As a consequence we have

\begin{lem}
\label{lem encadrement}
There is $C\equiv C(V_2)>0$ such that for $h\leqslant h_0(V_2)$ small enough we have

\begin{equation}
\label{encadrement FR_sup}
\rho_h^{k+1}(x) \leqslant (1+Ch)\rho_h^{k+1/2}(x) \qquad \mbox{a.e.},
\end{equation}
and for all $M>0$ there is $c\equiv c(M,V_2)$ such that if $\|\rho_h^{k+1/2}\|_{\infty}\leqslant M$ then
\begin{equation}
\label{encadrement FR_inf}
(1-ch)\rho_h^{k+1/2}(x) \leqslant \rho_h^{k+1}(x) \qquad \mbox{a.e.}
\end{equation}
\end{lem}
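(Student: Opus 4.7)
The plan is to reduce both inequalities to a pointwise algebraic identity extracted from the optimality condition \eqref{opt Fisher-Rao}. On the null set $\{\rho_h^{k+1/2}=0\}$ one checks that the minimizer forces $\rho_h^{k+1}=0$ as well (otherwise one would pay strictly positive Fisher--Rao cost without any energy gain), so both inequalities hold trivially there. On the complementary set where $\rho_h^{k+1/2}>0$ a variational perturbation argument (as in \cite{GM}) forces $\rho_h^{k+1}>0$ as well, so dividing \eqref{opt Fisher-Rao} by $\sqrt{\rho_h^{k+1}}$ yields the pointwise identity
$$
\sqrt{\rho_h^{k+1/2}(x)} \;=\; \sqrt{\rho_h^{k+1}(x)}\,\Bigl(1+\tfrac{h}{2}F_2'(\rho_h^{k+1}(x))+\tfrac{h}{2}V_2(x)\Bigr).
$$
Everything now reduces to elementary manipulation of this identity.

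For the upper bound \eqref{encadrement FR_sup}, I would use the fact that $F_2(z)=\tfrac{1}{m_2-1}z^{m_2}$ with $m_2>1$ gives $F_2'\geqslant 0$, and $V_2(x)\geqslant -\|V_2\|_{L^\infty}$. The identity therefore implies
$$
\sqrt{\rho_h^{k+1/2}} \;\geqslant\; \sqrt{\rho_h^{k+1}}\,\bigl(1-\tfrac{h}{2}\|V_2\|_{L^\infty}\bigr).
$$
For $h\leqslant h_0:=\|V_2\|_{L^\infty}^{-1}$ the right-hand factor is positive, and squaring then Taylor-expanding $1/(1-\tfrac{h}{2}\|V_2\|_{L^\infty})^2\leqslant 1+Ch$ for some $C=C(V_2)$ gives the claim. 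Notice the upper bound requires \emph{no} assumption on $\|\rho_h^{k+1/2}\|_{L^\infty}$, precisely because $F_2'\geqslant 0$ can only help here.

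For the lower bound \eqref{encadrement FR_inf}, I would first use the upper bound just established, together with $\|\rho_h^{k+1/2}\|_{L^\infty}\leqslant M$, to infer $\rho_h^{k+1}\leqslant (1+Ch)M\leqslant 2M$ (for $h$ small enough), which is the crucial ingredient. Since $F_2'$ is nondecreasing on $[0,\infty)$, this yields the pointwise bound $F_2'(\rho_h^{k+1})\leqslant F_2'(2M)=:K(M)$, so the identity gives
$$
\sqrt{\rho_h^{k+1/2}} \;\leqslant\; \sqrt{\rho_h^{k+1}}\,\bigl(1+\tfrac{h}{2}(K(M)+\|V_2\|_{L^\infty})\bigr).
$$
Squaring and linearizing in $h$ produces the desired $(1-ch)\rho_h^{k+1/2}\leqslant \rho_h^{k+1}$ with $c=c(M,V_2)$. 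The only (mild) obstacle worth flagging is the coupling between the two bounds: the lower bound genuinely needs the upper bound to tame the nonlinear term $F_2'(\rho_h^{k+1})$, which is why the $L^\infty$ hypothesis enters only there and why the order in which one establishes the two estimates matters.
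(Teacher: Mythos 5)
Your proof follows essentially the same route as the paper: divide the optimality condition \eqref{opt Fisher-Rao} by $\sqrt{\rho_h^{k+1}}$, use $F_2'\geqslant 0$ and $|V_2|\leqslant\|V_2\|_\infty$ for the upper bound, then bootstrap through the upper bound to get $F_2'(\rho_h^{k+1})\leqslant F_2'(2M)$ for the lower bound. You are slightly more explicit than the paper about the zero set of $\rho_h^{k+1/2}$ and the fact that $\rho_h^{k+1}>0$ wherever $\rho_h^{k+1/2}>0$ (the paper works only inside $\supp\rho_h^{k+1}$ and leaves the extension implicit), which is a welcome clarification for the lower bound.
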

Note in particular that this immediately implies
\begin{equation}
\label{eq:supports}
\supp\,\rho_h^{k+1} = \supp\,\rho_h^{k+1/2},
\end{equation}
which was to be expected since the reaction part $\partial_t\rho =-\rho (F_2'(\rho)+V_2)$ of the PDE \eqref{eq:KFR-general equation} preserves strict positivity.
\begin{proof}
We start with the upper bound: inside $\supp \rho_h^{k+1}$, \eqref{opt Fisher-Rao} and $F_2'\geqslant 0$ give
\begin{eqnarray*}
\sqrt{\rho_h^{k+1}(x)} - \sqrt{\rho_h^{k+1/2}(x)} &=& -h \sqrt{\rho_h^{k+1}(x)}(F_2'(\rho_h^{k+1}(x)) +V_2(x))\\
&\leqslant & -hV_2(x) \sqrt{\rho_h^{k+1}(x)}
\leqslant h\|V_2\|_\infty\sqrt{\rho_h^{k+1}(x)}
\end{eqnarray*} 
whence
$$
\sqrt{\rho_h^{k+1}(x)} \leqslant \frac{1}{1-h\|V_2\|_\infty}\sqrt{\rho_h^{k+1/2}(x)}.
$$
Taking squares and using
$$
\frac{1}{(1-h\|V_2\|_\infty)^2} =1+2\|V_2\|_{L^\infty}h+\mathcal O(h^2)\leqslant 1+3\|V_2\|_{L^\infty}h
$$
for small $h$ gives the desired inequality.

For the lower bound \eqref{encadrement FR_inf}, we first observe that since $F_2''\geqslant 0$ and from \eqref{encadrement FR_sup} we have $F_2'(\rho^{k+1}_h)\leqslant F_2'((1+Ch)\rho^{k+1/2}_h)\leqslant F_2'(2M)$ if $h$ is small enough.
Then \eqref{opt Fisher-Rao} gives inside $\supp \rho^{k+1}$
\begin{eqnarray*}
\sqrt{\rho_h^{k+1}(x)} - \sqrt{\rho_h^{k+1/2}(x))}&=& -h\sqrt{\rho_h^{k+1}(x)} (F_2'(\rho_h^{k+1}(x)) +V_2(x))\\
&\geqslant & -h(F_2'(2M) +\|V_2\|_\infty)\sqrt{\rho_h^{k+1}(x)},
\end{eqnarray*} 
hence
$$
\rho_h^{k+1}(x)\geqslant \frac{1}{(1+h(F_2'(2M) +\|V_2\|_\infty))^2}\rho_h^{k+1/2}(x)\geqslant (1-ch)\rho_h^{k+1/2}(x)
$$
for small $h$.
\end{proof}

Combining Lemma~\ref{lem:maximum_principle_W_step} and Lemma~\ref{lem encadrement}, we obtain at the continuous level

\begin{prop}
\label{prop:maximum principle}
For all $T>0$ there exist constants $M_T,M_T'$ such that for all $t \in [0,T]$,
$$
\|\rho_h(t)\|_{L^1\cap L^\infty} , \|\trho_h(t)\|_{L^1\cap L^\infty} \leqslant M_T
$$
and
$$
\|\rho_h(t)-\trho_h(t)\|_{L^1}\leqslant h M_T'
$$
uniformly in $h\geqslant 0$.
\end{prop}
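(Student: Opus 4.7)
The plan is to iterate the two one-step estimates of Lemma~\ref{lem:maximum_principle_W_step} and Lemma~\ref{lem encadrement} over $k\leqslant T/h$ and recover uniform bounds via a discrete Gronwall argument.

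For the $L^1$ bound, the Wasserstein step is mass-preserving by \eqref{eq: mass-conservation W}, while integrating the pointwise upper bound \eqref{encadrement FR_sup} gives $\|\rho_h^{k+1}\|_{L^1}\leqslant (1+Ch)\|\rho_h^{k+1/2}\|_{L^1}$. Composing, $\|\rho_h^{k+1}\|_{L^1}\leqslant(1+Ch)\|\rho_h^k\|_{L^1}$, and iterating to $kh\leqslant T$ yields $\|\rho_h^k\|_{L^1},\|\rho_h^{k+1/2}\|_{L^1}\leqslant e^{CT}\|\rho_0\|_{L^1}$.

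The main obstacle is the $L^\infty$ bound, because Lemma~\ref{lem:maximum_principle_W_step} does not propagate a constant sup bound but rather a pointwise Barenblatt-profile supersolution $\overline{\rho}_C(x):=(F_1')^{-1}(C-V_1(x))$. The idea is to carry along such a profile through the full cycle: choose $C_0$ so that $\rho_0\leqslant\overline{\rho}_{C_0}$ (e.g.\ $C_0=F_1'(\|\rho_0\|_\infty)+\|V_1\|_\infty$); then \eqref{eq:PMAX_W} gives $\rho_h^{k+1/2}\leqslant\overline{\rho}_{C_k}$, and \eqref{encadrement FR_sup} gives $\rho_h^{k+1}\leqslant(1+Ch)\overline{\rho}_{C_k}$. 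To absorb this back into a Barenblatt profile at the next step, I need the smallest $C_{k+1}$ with $(1+Ch)\overline{\rho}_{C_k}\leqslant\overline{\rho}_{C_{k+1}}$; applying $F_1'$ and using $F_1'((1+Ch)z)=(1+Ch)^{m_1-1}F_1'(z)$ in the PME case (respectively $F_1'((1+Ch)z)=F_1'(z)+\log(1+Ch)$ in the linear case) together with $V_1\geqslant-\|V_1\|_\infty$ yields a linear recurrence
$$
C_{k+1}+\|V_1\|_\infty\leqslant (1+Ch)^{m_1-1}\bigl(C_k+\|V_1\|_\infty\bigr)
\qquad\text{(resp.\ }C_{k+1}\leqslant C_k+Ch\text{).}
$$
A discrete Gronwall estimate gives $C_k\leqslant\widetilde C_T$ for $kh\leqslant T$, and hence $\|\rho_h^k\|_\infty,\|\trho_h(t)\|_\infty\leqslant(F_1')^{-1}(\widetilde C_T+\|V_1\|_\infty)=:M_T$.

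For the last estimate, on $(kh,(k+1)h]$ one has $\rho_h(t)-\trho_h(t)=\rho_h^{k+1}-\rho_h^{k+1/2}$. The just-proved $L^\infty$ bound allows to invoke the lower estimate \eqref{encadrement FR_inf} with $M=M_T$, so together with \eqref{encadrement FR_sup} we obtain the two-sided pointwise control
$$
|\rho_h^{k+1}(x)-\rho_h^{k+1/2}(x)|\leqslant Ch\,\rho_h^{k+1/2}(x) \qquad \text{a.e.,}
$$
for some $C=C(M_T,V_2)$. Integrating over $\Omega$ and using the uniform $L^1$ bound from the first step gives $\|\rho_h(t)-\trho_h(t)\|_{L^1}\leqslant hM_T'$, concluding the proof.
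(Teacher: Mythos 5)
Your proposal is correct and follows the same three-part structure as the paper's proof: iterate the $L^1$ bound using mass conservation and \eqref{encadrement FR_sup}, obtain the $L^\infty$ bound by propagating a Barenblatt-type pointwise supersolution through \eqref{eq:PMAX_W} and \eqref{encadrement FR_sup}, and then use the resulting $L^\infty$ bound to invoke \eqref{encadrement FR_inf} with $M=M_T$ and get the two-sided pointwise control that yields $\|\rho_h(t)-\trho_h(t)\|_{L^1}\leqslant hM_T'$ after integration.

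The one place where you go beyond the paper is the $L^\infty$ step: the paper compresses it to ``by induction combining \eqref{eq:PMAX_W} and \eqref{encadrement FR_sup}, see [PT, lemma 2],'' while you spell out the recursion on the level constants $C_k$ of the profile $\overline{\rho}_{C_k}=(F_1')^{-1}(C_k-V_1)$ — namely the multiplicative update in the PME case versus the additive update in the log case — and close it by discrete Gronwall. This is in effect a re-derivation of the content delegated to [PT], and it is consistent with the paper's claim that $M_T$ depends on $\|V_1\|_{L^\infty}$. Two small technical points worth keeping in mind but which do not affect correctness: (i) one should check that $C_k\geqslant\|V_1\|_\infty$ is preserved along the recursion so that \eqref{eq:PMAX_W} remains applicable — this holds automatically since $C_{k+1}\geqslant C_k\geqslant C_0\geqslant\|V_1\|_\infty$; (ii) in the log case $(F_1')^{-1}(y)=e^y$ is defined for all $y$, so the condition $V_1\leqslant C$ is only binding in the PME case, which your derivation respects.
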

Note from the second estimate that strong $L^1((0,T)\times \Omega)$ convergence of $\rho_h$ will immediately imply convergence of $\trho_h$ to the same limit.
\begin{proof}
By induction combining \eqref{eq:PMAX_W} and \eqref{encadrement FR_sup}, we obtain, for all $t \in [0,T]$,
$$\|\rho_h(t)\|_{L^\infty} , \|\trho_h(t)\|_{L^\infty} \leqslant C_T, $$
 where $C_T$ is a constant depending on $\| V_1 \|_{L^\infty}$, see \cite[lemma 2]{PT}.
The $L^1$ bound is even easier: since the Wasserstein step is mass preserving, we can integrate \eqref{encadrement FR_sup} in space to get
$$
\|\rho^{k+1}_h\|_{L^1}\leqslant (1+Ch)\|\rho_h^{k+1/2}\|_{L^1}=(1+Ch)\|\rho_h^{k+1}\|_{L^1}.
$$
For $t\leqslant T\Leftrightarrow k\leqslant \lfloor T/h\rfloor$ the $L^{1}$ bounds immediately follow by induction, with $(1+Ch)^{\lfloor T/h\rfloor}\lesssim e^{CT}$.
and we conclude again by induction.

In order to compare now $\rho_h$ and $\trho_h$, we take advantage of the above upper bound to write $\rho^{k+1/2}_h\leqslant M_T$ as long as $kh\leqslant T$.
Taking $c=c(M_T)$ in \eqref{encadrement FR_inf} and combining with \eqref{encadrement FR_sup}, we have
$$
-ch \rho^{k+1/2}_h\leqslant \rho^{k+1/2}_h-\rho^{k+1}_h\leqslant Ch \rho^{k+1/2}_h\qquad \mbox{a.e.}
$$
Integrating in $\Omega$ we conclude that
$$
\|\rho_h(t)-\trho_h(t)\|_1=\|\rho^{k+1}_h-\rho_h^{k+1/2}\|_1\leqslant h\max\{c,C\}\|\rho_h^{k+1/2}\|_1
\leqslant h\max\{c,C\}M_T=hM'_T
$$
and the proof is complete.
\end{proof}

\subsection{Energy dissipation}
Our goal is here to estimate the crossed dissipation along each elementary $\W,\FR$ step.

Testing $\rho=\rho_h^k$ in the first Wasserstein step in \eqref{splitting scheme}, we get as usual
\begin{eqnarray}
\label{sum telescopique MK 1}
\frac{1}{2h}\W^2(\rho_h^{k+1/2}, \rho_h^k) \leqslant \F_1(\rho_h^k)-\F_1(\rho_h^{k+1/2})+\int_\Omega V_1(\rho_h^{k}-\rho_h^{k+1/2}).
\end{eqnarray}
Since $V_1$ is Globally Lipschitz we can first use standard methods from \cite{DFF,L} to control $\int_\Omega V_1(\rho_h^{k}-\rho_h^{k+1/2})$ in terms of $\W^2(\rho_h^{k+1/2},\rho_h^{k})$, and suitably reabsorb in the left-hand side to obtain
\begin{eqnarray}
\label{sum telescopique MK 2}
\frac{1}{4h}\W^2(\rho_h^{k+1/2}, \rho_h^k) \leqslant \F_1(\rho_h^k)-\F_1(\rho_h^{k+1/2})+C_Th.
\end{eqnarray}

The dissipation of $\F_1$ along the Fisher-Rao step is controlled as
\begin{prop}
\label{prop:decr F1}
For all $T>0$ there exists a constant $C_T>0$ such that, for all $k \geqslant 0$ and $k\leq \lfloor T/h\rfloor$,
\begin{equation}
\label{décroissance F1}
\F_1(\rho_h^{k+1}) \leqslant \F_1(\rho_h^{k+1/2}) +C_Th.
\end{equation}
\end{prop}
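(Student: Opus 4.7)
The plan is to exploit the fact that Lemma~\ref{lem encadrement} forces $\rho_h^{k+1}$ and $\rho_h^{k+1/2}$ to differ pointwise only by a multiplicative factor of order $1+O(h)$, so the variation of $\F_1$ across the Fisher-Rao step is itself of order $h$, uniformly in $k$ up to the horizon $T$.

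More precisely, first I would apply Proposition~\ref{prop:maximum principle} to fix a uniform bound $\|\rho_h^{k+1/2}\|_{L^\infty} \leqslant M_T$ valid for every $k \leqslant \lfloor T/h\rfloor$, and then invoke Lemma~\ref{lem encadrement} with $M=M_T$ (and $h\leqslant h_0$) to obtain constants $c=c(M_T,V_2)$ and $C=C(V_2)$ such that, using \eqref{eq:supports} to identify the supports, we may write on $\{\rho_h^{k+1/2}>0\}$
$$
\rho_h^{k+1}(x) = \alpha(x)\,\rho_h^{k+1/2}(x), \qquad \alpha(x) \in [1-ch,\, 1+Ch].
$$

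Next I would estimate the pointwise increment $F_1(\rho_h^{k+1})-F_1(\rho_h^{k+1/2})$ in each of the two cases of \eqref{assumption Diffusion}. For the porous-media case $F_1(z)=\frac{1}{m_1-1}z^{m_1}$, the homogeneity $F_1(\alpha z)=\alpha^{m_1}F_1(z)$ gives the clean identity
$$
F_1(\rho_h^{k+1}) - F_1(\rho_h^{k+1/2}) = (\alpha^{m_1}-1)\,F_1(\rho_h^{k+1/2}),
$$
with $|\alpha^{m_1}-1|\leqslant C' h$ for $h$ small, where $C'$ depends only on $m_1,c,C$. For the linear case $F_1(z)=z\log z - z$, a direct expansion yields
$$
F_1(\alpha z) - F_1(z) = \alpha z \log \alpha + (\alpha-1)F_1(z),
$$
and using $|\log\alpha|\leqslant 2\max(c,C)h$ together with $|\alpha-1|\leqslant \max(c,C)h$, each term is bounded pointwise by $C'h\,(z+|F_1(z)|)$.

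Finally, integrating over $\Omega$ in both cases, I would use the uniform bounds $\|\rho_h^{k+1/2}\|_{L^1\cap L^\infty}\leqslant M_T$ of Proposition~\ref{prop:maximum principle} together with $|\Omega|<\infty$ to conclude that $\|\rho_h^{k+1/2}\|_{L^1} + \|F_1(\rho_h^{k+1/2})\|_{L^1} \leqslant C_T$ uniformly in $k\leqslant \lfloor T/h\rfloor$, which yields the desired
$$
\F_1(\rho_h^{k+1}) - \F_1(\rho_h^{k+1/2}) \leqslant C_T\, h.
$$
There is no serious obstacle here; the only care to take is to apply Lemma~\ref{lem encadrement} with a $k$-independent constant (which is precisely what the uniform $L^\infty$ bound $M_T$ provides) and, in the logarithmic case, to handle the sign-indefiniteness of $F_1$ by retaining absolute values throughout before absorbing the $|F_1(\rho_h^{k+1/2})|$ factor into the uniform $L^\infty$ estimate.
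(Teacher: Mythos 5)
Your proof is correct and relies on the same two ingredients as the paper's: the multiplicative sandwich $(1-ch)\rho_h^{k+1/2}\leqslant \rho_h^{k+1}\leqslant(1+Ch)\rho_h^{k+1/2}$ from Lemma~\ref{lem encadrement}, together with the uniform $L^1\cap L^\infty$ bounds of Proposition~\ref{prop:maximum principle}. The only real difference is in how you process the logarithmic case. The paper exploits the monotonicity of $z\mapsto z\log z$ by splitting $\Omega$ into the regions $\{\rho_h^{k+1}\geqslant e^{-1}\}$ (where it is increasing, so the upper bound \eqref{encadrement FR_sup} is used) and $\{\rho_h^{k+1}\leqslant e^{-1}\}$ (where it is decreasing, so the lower bound \eqref{encadrement FR_inf} is used). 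You instead write $\rho_h^{k+1}=\alpha\,\rho_h^{k+1/2}$ on the common support and use the exact identity $F_1(\alpha z)-F_1(z)=\alpha z\log\alpha+(\alpha-1)F_1(z)$, then estimate both terms in absolute value; this avoids the case distinction entirely and gives a two-sided bound for free. Both routes then finish by integrating and invoking $\|\rho_h^{k+1/2}\|_{L^1\cap L^\infty}\leqslant M_T$ and $|\Omega|<\infty$. Your variant is a legitimate, slightly cleaner presentation of the same argument; for the porous-media case the homogeneity identity $F_1(\alpha z)=\alpha^{m_1}F_1(z)$ is also a tidy way to phrase what the paper does via monotonicity.
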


\begin{proof}
We first treat the case of $F_1(z) = \frac{1}{m_1 -1}z^{m_1}$ with $m_1>1$.
Since $F_1$ is increasing, we use \eqref{encadrement FR_sup} to obtain 
\begin{eqnarray*}
\F_1(\rho_h^{k+1}) - \F_1(\rho_h^{k+1/2}) & \leqslant & \frac{( (1+Ch)^{m_1} -1)}{m_1 -1}\int_\Omega (\rho_h^{k+1/2})^{m_1}\\
&\leqslant & Ch \|\rho^{k+1/2}\|_{L^\infty}^{m_1-1}\,\|\rho^{k+1/2}\|_{L^1},
\end{eqnarray*}
and we conclude from Proposition~\ref{prop:maximum principle}.

In the second case $F_1(z) = z\log(z)-z$, we have
$$
\F_1(\rho_h^{k+1}) = \int_{\{\rho_h^{k+1} \leqslant e^{-1}\}} \rho_h^{k+1}\log(\rho_h^{n+1}) +\int_{\{\rho_h^{k+1} \geqslant e^{-1}\}} \rho_h^{k+1}\log(\rho_h^{k+1}) -\int_\Omega \rho_h^{k+1}.
$$
Note from Proposition~\ref{prop:maximum principle} that the $z$ contribution in $F_1(z)=z\log z-z$ is immediately controlled by $|\int \rho_h^{k+1}-\int\rho^{k+1/2}_h|\leqslant \|\rho^{k+1}_h-\rho_h^{k+1/2}\|_{L^1}\leqslant hM_T'$, so we only have to estimate the $z\log z$ contribution.
Since $z\mapsto z\log z$ is increasing on $\{z\geqslant e^{-1}\}$ and using \eqref{encadrement FR_sup}, the second term in the right hand side becomes

\begin{eqnarray*}
\int_{\{\rho_h^{k+1} \geqslant e^{-1}\}} \rho_h^{k+1}\log(\rho_h^{k+1}) &\leqslant & \int_{\{\rho_h^{k+1} \geqslant e^{-1}\}} (1+Ch)\rho_h^{k+1/2}\log((1+Ch)\rho_h^{k+1/2})\\
&\leqslant & \int_{\{\rho_h^{k+1} \geqslant e^{-1}\}} \rho_h^{k+1/2}\log(\rho_h^{k+1/2})
+ Ch\int_{\{\rho_h^{k+1} \geqslant e^{-1}\}} \rho_h^{k+1/2}\log(\rho_h^{k+1/2}) \\
& & \hspace{1.5cm} + (1+Ch)\int_{\{\rho_h^{k+1}\geqslant e^{-1}\}}\rho_h^{k+1/2}\log(1+Ch)\\
&\leqslant &  \int_{\{\rho_h^{k+1} \geqslant e^{-1}\}} \rho_h^{k+1/2}\log(\rho_h^{k+1/2}) +C_Th,
\end{eqnarray*}
where we used $\|\rho_h^{k+1/2}\|_{L^1} \leqslant M_T$ from Proposition~\ref{prop:maximum principle} as well as $\log(1+Ch) \leqslant Ch$ in the last inequality.
Using the same method with the bound from below \eqref{encadrement FR_inf} on $\{\rho^{k+1}_h\leqslant e^{-1}\}$ (where $z\mapsto z\log z$ is now decreasing), we obtain similarly
$$
\int_{\{\rho_h^{k+1} \leqslant e^{-1}\}} \rho_h^{k+1}\log(\rho_h^{k+1}) \leqslant  \int_{\{\rho_h^{k+1} \leqslant e^{-1}\}} \rho_h^{k+1/2}\log(\rho_h^{k+1/2}) +C_Th.
$$
Combining both inequalities gives
$$
\int_\Omega \rho_h^{k+1}\log(\rho_h^{k+1}) \leqslant \int_\Omega \rho_h^{k+1/2}\log(\rho_h^{k+1/2}) +C_Th
$$
and the proof is complete.

\end{proof}

Summing \eqref{sum telescopique MK 2} and \eqref{décroissance F1} over $k$ we obtain
\begin{equation}
\label{eq:sum telescopic MK}
\frac{1}{2h}\sum_{k=0}^{N-1}\W^2(\rho_h^{k+1/2}, \rho_h^k) \leqslant \F_1(\rho_0)-\F_1(\rho_h^{N})+C_T,
\end{equation}
where $N=\lfloor \frac{T}{h}\rfloor$.\\

In the above estimate we just controlled the dissipation of $\F_1$ along the $\FR$/reaction steps, and the goal is now to similarly estimate the dissipation of $\F_2$ along the Wasserstein step.
Testing $\rho=\rho_h^{k+1/2}$ in the second Fisher-Rao step in \eqref{splitting scheme}, we obtain
\begin{eqnarray}
\label{sum telescopique FR 1}
\frac{1}{2h}\FR_2(\rho_h^{k+1}, \rho_h^{k+1/2}) \leqslant \F_2(\rho_h^{k+1/2})-\F_2(\rho_h^{k+1})+\int_\Omega V_2(\rho_h^{k+1/2}-\rho_h^{k+1}).
\end{eqnarray}

Since we assumed $V_2 \in L^\infty(\Omega)$ and because $\rho_h(t)=\rho^{k+1}_h$ remains close to $\trho_h(t)=\rho_h^{k+1/2}$ in $L^1$ uniformly in $t,h$ by Proposition~\ref{prop:maximum principle}, we immediately control the potential part as
\begin{equation}
 \label{sum telescopique FR 2}
\int_\Omega V_2(\rho_h^{k+1/2}-\rho_h^{k+1}) \leqslant \|V_2\|_\infty  C_Th.
\end{equation}
For the internal energy we argue exactly as in the proof Proposition \ref{prop:decr F1} (for the Porous Media part, since we chose here $F_2(z)=\frac{1}{m_2-1}z^{m_2}$), and obtain
\begin{equation}
 \label{décroissance F2}
\F_2(\rho_h^{k+1/2})-\F_2(\rho_h^{k+1}) \leqslant C_Th.
\end{equation}

Combining \eqref{sum telescopique FR 1}, \eqref{sum telescopique FR 2} and \eqref{décroissance F2}, we immediately deduce that 
\begin{eqnarray}
\label{sum telescopic FR}
\frac{1}{2h}\sum_{k=0}^{N-1}\FR^2(\rho_h^{k+1/2}, \rho_h^{k+1}) \leqslant C_T,
\end{eqnarray}
where $N=\lfloor \frac{T}{h}\rfloor$ as before.\\

Finally, we recover an approximate compactness in time in the form
\begin{prop}
\label{prop:1/2_holder}
There exists a constant $C_T>0$ such that for all $h$ small enough and $k\leqslant N=\lfloor T/h\rfloor$,
\begin{equation}
 \label{estimate sum KFR}
\frac{1}{h}\sum_{k=0}^{N-1} \KFR^2(\rho_h^k,\rho_h^{k+1}) \leqslant 4\F_1(\rho_0) +C_T.
\end{equation}
\end{prop}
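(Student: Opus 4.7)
The proof will be a direct assembly of the ingredients already collected. The key observation is that the third inequality in Proposition~\ref{prop:comparison_d_W_H} is tailor-made for our splitting scheme: with the choice $\nu_0=\rho_h^k$, $\nu_1=\rho_h^{k+1/2}$, $\nu=\rho_h^{k+1}$, the mass equality $|\nu_0|=|\nu_1|$ is provided for free by the mass-conservation statement \eqref{eq: mass-conservation W} of Lemma~\ref{lem:maximum_principle_W_step}, so we immediately get
$$
\KFR^2(\rho_h^k,\rho_h^{k+1})\;\leqslant\; 2\,\W^2(\rho_h^k,\rho_h^{k+1/2})\;+\;2\,\FR^2(\rho_h^{k+1/2},\rho_h^{k+1}).
$$

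Summing this inequality from $k=0$ to $N-1$ and dividing by $h$ reduces the task to controlling $\frac{1}{h}\sum\W^2$ and $\frac{1}{h}\sum\FR^2$ separately, which is exactly what the two previously-derived telescopic estimates \eqref{eq:sum telescopic MK} and \eqref{sum telescopic FR} provide. Plugging them in yields
$$
\frac{1}{h}\sum_{k=0}^{N-1}\KFR^2(\rho_h^k,\rho_h^{k+1})\;\leqslant\; 4\bigl(\F_1(\rho_0)-\F_1(\rho_h^N)\bigr)+C_T+4C_T.
$$

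The only remaining issue is the sign of $-\F_1(\rho_h^N)$. In the porous-medium case $F_1(z)=\frac{1}{m_1-1}z^{m_1}\geqslant 0$, hence $-\F_1(\rho_h^N)\leqslant 0$ and we are done. In the Boltzmann case $F_1(z)=z\log z-z$, we use the pointwise lower bound $z\log z-z\geqslant -1$ (minimum at $z=1$) together with the fact that $\Omega$ is bounded (as stated at the beginning of Section~\ref{sec:KFRsplitting}), so that $\F_1(\rho_h^N)\geqslant -|\Omega|$ and the extra constant $4|\Omega|$ can be absorbed into $C_T$. Relabeling the global constant gives the claimed estimate
$$
\frac{1}{h}\sum_{k=0}^{N-1}\KFR^2(\rho_h^k,\rho_h^{k+1})\;\leqslant\; 4\,\F_1(\rho_0)+C_T.
$$

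No step is really an obstacle here: all the hard work — the one-step Wasserstein estimate with the Lipschitz-$V_1$ reabsorption trick, the crossed dissipation Proposition~\ref{prop:decr F1}, the Fisher-Rao one-step estimate with the $L^\infty$-$V_2$ control, and the $L^1\cap L^\infty$ propagation of Proposition~\ref{prop:maximum principle} — has already been done. The present statement is just the point where the inf-convolutive nature of $\KFR$ (encoded in Proposition~\ref{prop:comparison_d_W_H}) is used to package the two independent $\W$- and $\FR$-bounds into a single uniform $\KFR$-bound, which will then be the main input for the time-compactness argument in the proof of Theorem~\ref{theo:existence differente energies}.
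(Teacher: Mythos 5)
Your proof is correct and follows essentially the same route as the paper: you combine the third inequality of Proposition~\ref{prop:comparison_d_W_H} (exploiting mass conservation from the Wasserstein step) with the two telescopic estimates \eqref{eq:sum telescopic MK} and \eqref{sum telescopic FR}, and then bound $-\F_1(\rho_h^N)$ from below using nonnegativity of the porous-medium energy or the bound $z\log z - z\geqslant -1$ on the bounded domain. The only minor slip is a bookkeeping of the generic constant (you wrote $C_T+4C_T$ where $4C_T+4C_T$ would be literal), which is of course immaterial once $C_T$ is relabeled.
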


\begin{proof}
Adding \eqref{eq:sum telescopic MK} and \eqref{sum telescopic FR} gives
$$
\frac{1}{h}\sum_{k=0}^{N-1} \W^2(\rho_h^k,\rho_h^{k+1/2})+\FR^2(\rho_h^{k+1/2},\rho_h^{k+1}) \leqslant 2\left( \F_1(\rho_0)-\F_1(\rho_h^{N})+C_T\right )+2C_T\leqslant 2\F_1(\rho_0) +C_T,
$$
since in any case $F_1(z)=\frac{1}{m_1-1}z^{m_1}\geqslant 0$ and $F_{1}(z)=z\log z-z\geqslant -1$ is bounded from below on the bounded domain $\Omega$, hence $\F_1(\rho_h^{N})\geqslant -C_{\Omega}$ uniformly.
It then follows from Proposition \ref{prop:comparison_d_W_H} that $\W^2(\rho_h^k,\rho_h^{k+1/2})+\FR^2(\rho_h^{k+1/2},\rho_h^{k+1})\geqslant \frac{1}{2}\KFR^2 \rho_h^k,\rho_h^{k+1}$ in the left-hand side, and the result immediately follows.

\end{proof}

\subsection{Estimates and convergences}

From the total-square distance estimate \eqref{estimate sum KFR} we recover as usual the approximate $\frac{1}{2}$-H\"older estimate
\begin{eqnarray}
\label{estimate holder}
\KFR(\rho_h(t),\rho_h(s))+\KFR(\trho_h(t),\trho_h(s)) \leqslant C_T|t-s+h|^{1/2}
\end{eqnarray}
for all fixed $T>0$ and $t,s\in[0,T]$.
From \eqref{sum telescopic FR} and Proposition~\ref{prop:comparison_d_W_H} we have moreover
\begin{eqnarray}
\label{estimate diff}
\KFR(\rho_h(t),\trho_h(t)) \leqslant \FR(\rho_h(t),\trho_h(t)) \leqslant C\sqrt{h}.
\end{eqnarray}
Using a refined version of Ascoli-Arzel\`a theorem, \cite[prop. 3.3.1]{AGS} and arguing exactly as in \cite[prop. 4.1]{GM}, we see that for all $T>0$ and up to extraction of a discrete subsequence, $\rho_h$ and $\trho_h$ converge uniformly to the same $\KFR$-continuous curve $\rho \in \mathcal{C}^{1/2}([0,T], \M^+_{\KFR})$ as 
$$
\sup_{t\in [0,T]} (\KFR(\rho_h(t),\rho(t))+ \KFR(\trho_h(t),\rho(t)) ) \rightarrow 0.
$$

In order to pass to the limit in the nonlinear terms, we first strengthen this $\KFR$-convergence into a more tractable $L^1$ convergence.
The first step is to retrieve compactness in space:
\begin{prop}
For all $T>0$, $\rho_h$ and $\trho_h$ satisfies 
\begin{equation}
 \label{estimate P1}
\|P_1(\trho_h)\|_{L^2([0,T];H^1(\Omega))} \leqslant C_T.
\end{equation}
\end{prop}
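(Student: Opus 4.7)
The plan is to combine the optimality condition \eqref{opt Wasserstein} with the $L^\infty$ bound from Proposition~\ref{prop:maximum principle} and the telescopic Wasserstein estimate \eqref{eq:sum telescopic MK}. The observation that makes this work is that $\nabla P_1(\trho_h)$ can be expressed (pointwise, up to $V_1$) as a multiple of $\rho_h^{k+1/2} \nabla \varphi_h^{k+1/2}/h$, whose $L^2(\Omega)$-norm-squared is exactly $\W^2(\rho_h^{k+1/2},\rho_h^k)/h^2$ by the Brenier characterization of the Kantorovich potential.

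More precisely, starting from \eqref{opt Wasserstein} I would rearrange as
\[
\nabla P_1(\rho_h^{k+1/2}) = -\rho_h^{k+1/2}\Bigl(\tfrac{1}{h}\nabla \varphi_h^{k+1/2} + \nabla V_1\Bigr),
\]
square both sides, and use $(a+b)^2 \leqslant 2a^2+2b^2$ together with the pointwise bound $\rho_h^{k+1/2}\leqslant M_T$ from Proposition~\ref{prop:maximum principle} to obtain
\[
|\nabla P_1(\rho_h^{k+1/2})|^2 \leqslant \frac{2M_T}{h^2}\rho_h^{k+1/2}|\nabla\varphi_h^{k+1/2}|^2 + 2M_T\,\rho_h^{k+1/2}|\nabla V_1|^2.
\]
Integrating over $\Omega$ the first term becomes $\frac{2M_T}{h^2}\W^2(\rho_h^{k+1/2},\rho_h^k)$ since $\varphi_h^{k+1/2}$ is an optimal Kantorovich potential, and the second is controlled by $2M_T\|\nabla V_1\|_\infty^2\,\|\rho_h^{k+1/2}\|_{L^1}\leqslant C_T$. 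Multiplying by $h$ and summing over $k=0,\dots,N-1$ with $N=\lfloor T/h\rfloor$ yields
\[
\int_0^T\!\!\int_\Omega|\nabla P_1(\trho_h)|^2\,\rd x\,\rd t \;\leqslant\; 2M_T\sum_{k=0}^{N-1}\frac{\W^2(\rho_h^{k+1/2},\rho_h^k)}{h} + C_T\,T \;\leqslant\; C_T,
\]
by the telescopic estimate \eqref{eq:sum telescopic MK}. For the $L^2$ bound on $P_1(\trho_h)$ itself I simply use $\|\trho_h(t)\|_\infty\leqslant M_T$ (Proposition~\ref{prop:maximum principle}) and the fact that $P_1$ is a continuous function with $P_1(0)=0$, so $\|P_1(\trho_h(t))\|_{L^\infty}\leqslant C(M_T)$, and integrating on the bounded domain $\Omega$ over $[0,T]$ gives the desired $L^2([0,T];L^2(\Omega))$ control.

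I do not anticipate a serious obstacle: the proof is essentially a careful bookkeeping exercise once the two key tools — the pointwise maximum principle and the Brenier identity $\int\rho_h^{k+1/2}|\nabla\varphi_h^{k+1/2}|^2=\W^2(\rho_h^{k+1/2},\rho_h^k)$ — are available. The only mild subtlety is invoking the correct interpretation of the optimality condition \eqref{opt Wasserstein} as an equation holding $\rho_h^{k+1/2}$-a.e. and transferring it to a Lebesgue-a.e. bound via the $L^\infty$ estimate; since the pressures involved ($P_1(z)=z^{m_1}$ in the porous medium case, $P_1(z)=z$ in the linear case) are both bounded smooth functions of a bounded density, this is routine.
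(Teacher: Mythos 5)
Your proof is correct and follows essentially the same route as the paper: rearrange the optimality condition \eqref{opt Wasserstein}, use $(a+b)^2\leqslant 2a^2+2b^2$ and the $L^\infty$ bound $\rho_h^{k+1/2}\leqslant M_T$ to pull out one power of $\rho_h^{k+1/2}$, identify $\int\rho_h^{k+1/2}|\nabla\varphi_h^{k+1/2}|^2=\W^2(\rho_h^{k+1/2},\rho_h^k)$, and sum via the telescopic estimate \eqref{eq:sum telescopic MK}. The one detail you gloss over is that invoking \eqref{eq:sum telescopic MK} requires the lower bound $\F_1(\rho_h^N)\geqslant -C_\Omega$ (used in the paper) to turn $\F_1(\rho_0)-\F_1(\rho_h^N)+C_T$ into a finite $C_T$; on the other hand you are more explicit than the paper about bounding the $L^2(Q_T)$ norm of $P_1(\trho_h)$ itself, which the paper leaves implicit.
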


\begin{proof}
From \eqref{opt Wasserstein} and the $L^1\cap L^\infty$ bounds from Proposition~\ref{prop:maximum principle} we see that
\begin{eqnarray*}
 \int_\Omega |\nabla P_1(\rho_h^{k+1/2})|^2
 &\leqslant & \frac{1}{2h^2}\int_\Omega|\nabla \varphi_h^{k+1/2}|^2 (\rho_h^{k+1/2})^2  + \frac{1}{2} \int_\Omega|\nabla V_1|^2(\rho_h^{k+1/2})^2\\
 &\leqslant & \frac{C_T}{2h^2}\int_\Omega|\nabla \varphi_h^{k+1/2}|^2 \rho_h^{k+1/2} + \frac{1}{2} \|\nabla V_1\|_{\infty}^2\int _\Omega(\rho_h^{k+1/2})^2\\
 &\leqslant & C_T\left(\frac{\W ^2(\rho_h^{k+1/2},\rho_h^{k})}{h^2} + 1\right)
\end{eqnarray*}
since $\varphi^{k+1/2}_h$ is the optimal (backward) Kantorovich potential from $\rho_h^{k+1/2}$ to $\rho_h^{k}$.
Multiplying by $h>0$, summing over $k$, and exploiting \eqref{eq:sum telescopic MK} gives
\begin{equation*}
\|P_1(\trho_h)\|^2_{L^2([0,T];H^1(\Omega))}\leqslant \sum _{k=0}^{N-1}h\|P_1(\rho^{k+1/2}_h)\|^2_{H^1}
\leqslant C_T (\F_1(\rho_0)-\F_1(\rho_h^{N})+1)\leqslant C_T,
\end{equation*}
where we used as before $\F_1(\rho_h^N)\geqslant -C_{\Omega}$ in the last inequality.
\end{proof}
We are now in position of proving our main result:
\begin{proof}[Proof of Theorem~\ref{theo:existence differente energies}]
Exploiting \eqref{estimate sum KFR} and \eqref{estimate P1}, we can apply the extension of the Aubin-Lions lemma established by Rossi and Savar\'e in \cite{RS} to obtain that $\trho_h$ converges to $\rho$ strongly in $L^1(Q_T)$ (see \cite{L}).
By diagonal extraction if needed, we can assume that the convergence holds in $L^1(Q_T)$ for all fixed $T>0$.
Then by Proposition~\ref{prop:maximum principle} we have

$$
\|\rho_h - \rho \|_{L^1(Q_T)} \leqslant \|\rho_h - \trho_h \|_{L^1(Q_T}+\|\trho_h - \rho \|_{L^1(Q_T)}\leqslant C_T h +\|\trho_h - \rho \|_{L^1(Q_T)}\to 0
$$
hence $\rho_h\to \rho$ as well.

Moreover, since $P_1(\trho_h)$ is bounded in $L^2((0,T),H^1(\Omega))$ we can assume that $\nabla P_1(\trho_h) \rightharpoonup \nabla P_1(\rho)$ in $L^2((0,T),H^1(\Omega))$ for all $T>0$. 
Exploiting the Euler-Lagrange equations \eqref{opt Wasserstein}\eqref{opt Fisher-Rao} and arguing exactly as in \cite[Theorem 4]{GM}, it is easy to pass to the limit to conclude that
$$
\int_\Omega \rho(t_2)\varphi -\rho(t_1)\varphi=-\int_{t_1}^{t_2}\int_\Omega \Big\{
\nabla P(\rho)\cdot \nabla\varphi 
+\rho \nabla V_1\cdot \nabla\varphi 
-\rho(F'_2(\rho)+V_2)\varphi\Big\}
$$
for all $0<t_1<t_2$ and $\varphi\in \mathcal C^1_b(\Omega)$.
Since $\rho\in\mathcal C([0,T];\M^+_{\KFR})$ takes the initial datum $\rho(0)=\rho_0$ and $\KFR$ metrizes the narrow convergence of measures, this is well-known to be equivalent to our weak formulation in Definition~\ref{def:weak_solutions}, and the proof is complete.
\end{proof}

\begin{rem}
In the above proofs one can check that Theorem \ref{theo:existence differente energies} extends in fact to all $\C ^1$ nonlinearities $F_2$ such that $F_2' \geqslant C$ for some $C \in \R$.
Likewise, we stated and proved our main result in bounded domains for convenience: all the above arguments immediately extend to $\Omega=\R^d$ at least for $F_1(z)=\frac{1}{m_1-1}z^{m_1}\geqslant 0$.
The only place where we actually used the boundedness of $\Omega$ was in the proof of Proposition~\ref{prop:1/2_holder}, when we bounded from below $\mathcal F_1(\rho^N_h)\geqslant -C_\Omega$ in order to retrieve the total-square distance estimate.
When $\Omega=\R^d$ and $F_1(z)=z\log z-z$ a lower bound $\mathcal F_1(\rho^N_h)\geqslant -C_T$ still holds, but the proof requires a tedious control of the second moments $\mathfrak m_2(\rho)=\int_{\R^d}|x|^2\rho$ hence we did not address this technical issue for the sake of brevity.
\end{rem}

\section{Application to systems}
\label{section:systems}
In this section we shall try to illustrate that the previous scheme is very tractable and allows to solve systems of the form

\begin{eqnarray}
\label{eq:KFR-general system}
\left\{\begin{array}{l}
\partial_t \rho_1 = \dive(\rho_1 \nabla(F_1'(\rho_1)+ V_1[\rho_1,\rho_2]))-\rho_1(G_1'(\rho_1) +U_1[\rho_1,\rho_2]),\\
\partial_t \rho_2 = \dive(\rho_2 \nabla(F_2'(\rho_2)+ V_2[\rho_1,\rho_2]))-\rho_2 (G_2'(\rho_2) +U_2[\rho_1,\rho_2]),\\
{\rho_1}_{|t=0}=\rho_{1,0}, \, {\rho_2}_{|t=0}=\rho_{2,0}.
\end{array}\right.
\end{eqnarray}
For simplicity we assume again that $\Omega$ is a smooth, bounded subset of $\Rn$.
Then the system \eqref{eq:KFR-general system} is endowed with Neumann boundary conditions,
$$ \rho_1 \nabla(F_1'(\rho_1)+ V_1[\rho_1,\rho_2]) \cdot \nu =0 \text{  and  } \rho_2 \nabla(F_2'(\rho_2)+ V_2[\rho_1,\rho_2]) \cdot \nu=0 \qquad \text{ on } \mathbb{R}^+ \times \partial \Omega,$$
where $\nu$ is the outward unit normal to $\partial \Omega$.
In system of the form \eqref{eq:KFR-general system}, we allow interactions between densities in the potential terms $V_i[\rho_1,\rho_2]$ and $U_i[\rho_1,\rho_2]$. In the mass-conservative case (without reaction terms), this system has already been studied in \cite{DFF,L,CL1}, using a semi-implicit JKO scheme introduced by Di Francesco and Fagioli, \cite{DFF}. This section combines the splitting scheme introduced in the previous section and semi-implicit schemes both for the Wasserstein JKO step and for the Fisher-Rao JKO step.

\smallskip
For the ease of exposition we keep the same assumptions for $F_i$ and $G_i$ as in the previous section, i.e the diffusion terms $F_i$ satisfy \eqref{assumption Diffusion} and the reaction terms $G_i$ satisfy \eqref{assumption reaction}.
Moreover, since the potentials depend now on the densities $\rho_1$ and $\rho_2$, we need stronger hypotheses:
we assume that $V_i \, :\, L^1(\Omega;\R^+)^2 \rightarrow \mathcal{C}^{1}(\Omega)$ are continuous and verify, uniformly in $\rho_1,\rho_2 \in L^1(\Omega;\R^+)$,
\begin{eqnarray}
\label{assumption V system}
\begin{array}{c}
 \|V_i[\rho_1,\rho_2]\|_{W^{1,\infty(\Omega)}} \leqslant K (1+ \|\rho_1\|_{L^1(\Omega)} +\|\rho_2\|_{L^1(\Omega)}),\\
 \|\nabla( V_i[\rho_1,\rho_2]) - \nabla (V_i[\mu_1,\mu_2]) \|_{L^\infty(\Omega)} \leqslant K(\|\rho_1 -\mu_1\|_{L^1(\Omega)} +\|\rho_2 -\mu_2\|_{L^1(\Omega)}).
\end{array}
\end{eqnarray}

The interacting potentials we have in mind are of the form $V_i[\rho_1,\rho_2]= K_{i,1} \ast \rho_1 + K_{i,2}\ast \rho_2$, where $K_{i,1},K_{i,2} \in W^{1,\infty}(\Omega)$ and then $V_i$ satisfies \eqref{assumption V system}. 
For the reaction, we assume that the potentials $U_i$ are continuous from $L^1(\Omega)_+^2$ to $L^1$ with moreover
 
\begin{equation}
\label{assumption U}
 U_i[\rho_1,\rho_2] \geqslant  -K, \qquad \forall\,\rho_1,\rho_2 \in L^1(\Omega;{\R}^+)
\end{equation}
for some $K\in\R$, and
\begin{equation}
 \label{assumption U max}
 \|  U_i[\rho_1,\rho_2] \|_{L^\infty(\Omega)} \leqslant  K_M,
 \qquad
 \forall \|\rho_1\|_{L^1(\Omega)},\|\rho_2\|_{L^1(\Omega)} \leqslant M
\end{equation}
for some nondecreasimg function $K_M\geqslant 0$ of $M$.
The examples we have in mind are of the form 
$$ U_1[\rho_1,\rho_2]=C_1\frac{\rho_2}{1+\rho_1}, 
\quad 
U_2[\rho_1,\rho_2]=-C_2\frac{\rho_1}{1+\rho_1}
$$
for some constants $C_i\geq 0$, or nonlocal reactions
$$
U_i[\rho_1, \rho_2](x) = \int_\Omega K_{i,1}(x,y) \rho_1(y) \,dy + \int_\Omega K_{i,2}(x,y)\rho_2(y) \,dy
$$
for some nonnegative kernels $K_{i,j}\in L^1\cap L^\infty$.
Such reaction models appear for example in biological adaptive dynamics \cite{Per}.

\begin{de}
\label{def:weak_solutions system}
We say that $(\rho_1,\rho_2)\, : \, \R^+ \rightarrow L^1_+ \cap L^\infty_+(\Omega)$ is a weak solution of \eqref{eq:KFR-general system} if, for $i\in \{1,2\}$ and all $T< +\infty$, the pressure $P_i(\rho_i):=\rho_i F_i'(\rho_i)-F_i(\rho_i)$ satisfies $\nabla P_i(\rho_i) \in L^2([0,T] \times \Omega)$, and
\begin{multline}
 \int_0^{+\infty}\left( \int_\Omega (\rho \partial_t \phi_i -\rho_i\nabla V_i[\rho_1,\rho_2] \cdot \nabla \phi_i  - \nabla P_i(\rho_i) \cdot \nabla \phi_i - \rho_i (G_i'(\rho_i) +U_i[\rho_1,\rho_2]) \phi_i) \,dx \right)\, dt\\
  = -\int_\Omega \phi_i(0,x) \rho_{i,0}(x) \,dx,
\end{multline}
for all $\phi_i \in \C^\infty_c([0,+\infty) \times \Rn)$.
\end{de}
Then, the following result holds,
\begin{thm}
\label{theo:existence system}
Assume that $\rho_{1,0},\rho_{2,0} \in L^1\cap L^\infty_+(\Omega)$ and that $V_i,U_i$ satisfy \eqref{assumption V system}\eqref{assumption U}\eqref{assumption U max}.
Then \eqref{eq:KFR-general system} admits at least one weak solution.
 \end{thm}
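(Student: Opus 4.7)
The plan is to combine the two-step splitting scheme of Section~\ref{sec:KFRsplitting} with a semi-implicit freezing of the cross-potentials, in the spirit of the Di Francesco--Fagioli scheme \cite{DFF,L}. Writing $\F_i(\rho):=\int_\Omega F_i(\rho)$ and $\G_i(\rho):=\int_\Omega G_i(\rho)$, we set $\rho_{i,h}^0=\rho_{i,0}$ and iteratively define, for $i=1,2$,
\begin{eqnarray*}
\rho_{i,h}^{k+1/2}&\in&\argmin\limits_{|\rho|=|\rho_{i,h}^k|}\Big\{\tfrac{1}{2h}\W^2(\rho,\rho_{i,h}^k)+\F_i(\rho)+\int_\Omega V_i[\rho_{1,h}^k,\rho_{2,h}^k]\,\rho\Big\},\\
\rho_{i,h}^{k+1}&\in&\argmin\limits_{\rho\in\M^+}\Big\{\tfrac{1}{2h}\FR^2(\rho,\rho_{i,h}^{k+1/2})+\G_i(\rho)+\int_\Omega U_i[\rho_{1,h}^{k+1/2},\rho_{2,h}^{k+1/2}]\,\rho\Big\}.
\end{eqnarray*}
Each half-step decouples into two independent scalar minimizations, one per species, with frozen potentials that lie in $W^{1,\infty}$ (by \eqref{assumption V system}) and $L^\infty$ (by \eqref{assumption U max}) respectively, so well-posedness is a direct consequence of the scalar analysis of Section~\ref{sec:KFRsplitting}. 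We then introduce piecewise-constant interpolants $\rho_{i,h},\trho_{i,h}$ as in \eqref{piecewise-constant curves}.

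The first step is to reproduce the pointwise estimates of Lemmas~\ref{lem:maximum_principle_W_step}--\ref{lem encadrement} species by species. The Wasserstein half-step preserves mass and propagates $L^\infty$ bounds controlled by $\|V_i[\rho_{1,h}^k,\rho_{2,h}^k]\|_{W^{1,\infty}}$, while the Fisher-Rao half-step yields $(1-ch)\rho_{i,h}^{k+1/2}\leqslant \rho_{i,h}^{k+1}\leqslant(1+Ch)\rho_{i,h}^{k+1/2}$ with $C$ governed by $\|U_i[\rho_{1,h}^{k+1/2},\rho_{2,h}^{k+1/2}]\|_\infty$. Both constants depend on the current $L^1$ masses of $\rho_{1,h},\rho_{2,h}$ through \eqref{assumption V system} and \eqref{assumption U max}, and those masses themselves grow at most multiplicatively by $(1+Ch)$ per step with a rate depending on the current $L^1$ norm. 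A discrete Gronwall argument then closes the loop and provides uniform $L^1\cap L^\infty$ bounds on every $[0,T]$, together with $\|\rho_{i,h}-\trho_{i,h}\|_{L^1}\leqslant C_Th$, the exact analogues of Proposition~\ref{prop:maximum principle}.

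With these a priori bounds, the energy-dissipation analysis of Section~\ref{sec:KFRsplitting} applies to $\F_i$ and $\G_i$ separately: the extra potential contributions $\int V_i(\rho_{i,h}^k-\rho_{i,h}^{k+1/2})$ and $\int U_i(\rho_{i,h}^{k+1/2}-\rho_{i,h}^{k+1})$ are absorbed exactly as in \eqref{sum telescopique MK 2} and \eqref{sum telescopique FR 2}, using the uniform $W^{1,\infty}$ and $L^\infty$ bounds on the frozen potentials. Summing telescopically and invoking Proposition~\ref{prop:comparison_d_W_H} produces
$$\frac{1}{h}\sum_{k=0}^{N-1}\KFR^2(\rho_{i,h}^k,\rho_{i,h}^{k+1})\leqslant C_T,$$
and hence an approximate $1/2$-H\"older estimate for each species. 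The pointwise optimality condition \eqref{opt Wasserstein} written for each $\rho_{i,h}^{k+1/2}$ then yields $\|\nabla P_i(\trho_{i,h})\|_{L^2(Q_T)}\leqslant C_T$ as in \eqref{estimate P1}, so that the extended Aubin--Lions lemma of Rossi--Savar\'e \cite{RS} delivers strong $L^1(Q_T)$ convergence of each $\trho_{i,h}$ to a limit $\rho_i\in L^1_+\cap L^\infty(Q_T)$, and then $\rho_{i,h}\to\rho_i$ in $L^1(Q_T)$ follows at once from $\|\rho_{i,h}-\trho_{i,h}\|_{L^1}\leqslant C_Th$.

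The main obstacle, and the reason the semi-implicit choice is essential, is the passage to the limit in the coupled potentials. The Lipschitz bound in \eqref{assumption V system} combined with the strong $L^1$ convergence of $\rho_{1,h},\rho_{2,h}$ gives $\nabla V_i[\rho_{1,h}^k,\rho_{2,h}^k]\to\nabla V_i[\rho_1,\rho_2]$ in $L^\infty(Q_T)$ (after piecewise-constant reinterpretation of the discrete data), while the continuity of $U_i:L^1_+^2\to L^1$ together with the uniform bound \eqref{assumption U max} gives, by dominated convergence, $U_i[\rho_{1,h}^{k+1/2},\rho_{2,h}^{k+1/2}]\to U_i[\rho_1,\rho_2]$ in $L^1(Q_T)$. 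With these two convergences in hand, the Euler-Lagrange equations \eqref{opt Wasserstein}--\eqref{opt Fisher-Rao} for each species pass to the limit verbatim as in the proof of Theorem~\ref{theo:existence differente energies}, yielding a weak solution in the sense of Definition~\ref{def:weak_solutions system}. A fully explicit coupling would destroy coercivity of the minimization, while a fully implicit coupling would force a joint minimization over both species and spoil the splitting structure; the semi-implicit freezing preserves both.
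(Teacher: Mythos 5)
Your proposal is correct and follows essentially the same route as the paper: a semi-implicit splitting scheme that freezes the cross-potentials at previously computed densities, uniform $L^1\cap L^\infty$ bounds obtained by iterating the maximum-principle estimate for the Wasserstein step and the $(1\pm Ch)$ bracketing for the Fisher--Rao step, then the telescopic total-square $\KFR$ estimate, the $H^1$ pressure bound, Rossi--Savar\'e compactness, and passage to the limit via the Lipschitz/continuity assumptions on $V_i,U_i$. The only cosmetic difference is that you freeze the reaction potentials $U_i$ at the half-step values $\rho_{j,h}^{k+1/2}$ while the paper freezes both $V_i$ and $U_i$ at $\rho_{j,h}^k$ for the whole double-step; both are legitimate semi-implicit choices and the analysis is unchanged, since the half-step densities are available before the Fisher--Rao minimization and the uniform $L^1$ bound on them is what matters. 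One small imprecision worth flagging: strong $L^1(Q_T)$ convergence of the densities gives $L^1(\Omega)$ convergence for a.e.\ $t$, hence (by \eqref{assumption V system}) a.e.-in-time convergence of $\nabla V_i$ in $L^\infty(\Omega)$ and, with the uniform bound, convergence in $L^p(Q_T)$ for every $p<\infty$; it does not yield $L^\infty(Q_T)$ convergence as written, but this suffices to pass to the limit in the weak formulation.
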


Note that this result can be easily adapted to systems with an arbitrary number of species $N \geqslant 2$, coupled by nonlocal terms $V_i[\rho_1,\dots,\rho_N]$ and $U_i[\rho_1,\dots,\rho_N]$.
\begin{rem}
A refined analysis shows that our approach would allow to handle systems of the form
$$\left\{\begin{array}{l}
\partial_t \rho_1 - \dive(\rho_1 \nabla(F_1'(\rho_1)+ V_1))=-\rho_1 h_1(\rho_1,\rho_2),\\
\partial_t \rho_2 - \dive(\rho_2 \nabla(F_2'(\rho_2)+ V_2))=+\rho_2 h_2(\rho_1),
\end{array}\right.$$
where $h_1$ is a nonnegative continuous function and $h_2$ is a continuous functions.

Indeed since $h_1\geq 0$ the reaction term is the first equation is nonpositive, hence
$
\|\rho_1(t) \|_{L^\infty(\Omega)} \leqslant C_T$.
Then it follows that $-h_2(\rho_1)$ satisfies assumptions \eqref{assumption U} and \eqref{assumption U max}.
A classical example is $ h_2(\rho_1)= \rho_1^\alpha$ and $h_1(\rho_1,\rho_2)= \rho_1^{\alpha-1} \rho_2$, where $\alpha \geqslant 1$, see for example \cite{MPsurvey} for more discussions.
\end{rem}

As already mentioned, the proof of theorem \ref{theo:existence system} is based on a semi-implicit splitting scheme.
More precisely, we construct four sequences $\rho_{1,h}^{k+1/2},\rho_{1,h}^{k+1},\rho_{2,h}^{k+1/2},\rho_{2,h}^{k+1}$ defined recursively as
\begin{eqnarray}
 \label{eq:splitting scheme Wass-FR syst}
\left\{\begin{array}{l}
\rho_{i,h}^{k+1/2} \in \argmin\limits_{\rho \in \M^+, |\rho|=|\rho_{i,h}^k|} \left\{ \frac{1}{2h} \W^2(\rho, \rho_{i,h}^k) + \F_i(\rho) +\V_i(\rho | \rho_{1,h}^k, \rho_{2,h}^k) \right\}\\
\\
\rho_{i,h}^{k+1} \in \argmin\limits_{\rho \in \M^+} \left\{ \frac{1}{2h} \FR^2(\rho, \rho_{i,h}^{k+1/2}) +  \G_i(\rho) +\mathcal{U}_i(\rho | \rho_{1,h}^k, \rho_{2,h}^k) \right\}
\end{array}\right.,
\end{eqnarray} 
where the fully implicit terms
$$ 
\F_i(\rho):= \left\{ \begin{array}{ll}
\int_\Omega F_i(\rho) & \text{    if } \rho \ll \mathcal{L}_{|\Omega}\\
+\infty & \text{    otherwise }
\end{array}\right.
\quad\text{and}\quad
\G_i(\rho):= \left\{ \begin{array}{ll}
\int_\Omega G_i(\rho) & \text{    if } \rho \ll \mathcal{L}_{|\Omega}\\
+\infty & \text{    otherwise }
\end{array}\right.,
$$
and the semi-implicit terms
$$
\V_i(\rho | \mu_1, \mu_2):= \int_\Omega V_i[\mu_1,\mu_2] \rho 
\quad\text{and}\quad
\mathcal{U}_i(\rho | \mu_1, \mu_2):= \int_\Omega U_i[\mu_1,\mu_2] \rho.
$$

In the previous section, the proof of theorem \ref{theo:existence differente energies} for scalar equations strongly leveraged the uniform $L^\infty(\Omega)$-bounds on the discrete solutions.
Here an additional difficulty arises due to the nonlocal terms $\nabla V_i[\rho_1,\rho_2]$ and $U_i[\rho_1,\rho_2]$, which are a priori not uniformly bounded in $L^\infty(\Omega)$.
Using assumption \eqref{assumption U} we will first obtain a uniform $L^1(\Omega)$-bound on $\rho_1,\rho_2$, and then extend proposition \ref{prop:maximum principle} to the system \eqref{eq:KFR-general system}.
This in turn will give a uniform $W^{1,\infty}$ control on $V_i[\rho_1,\rho_2]$ and $L^\infty$ control on $U_i[\rho_1,\rho_2]$ through our assumptions \eqref{assumption V system}-\eqref{assumption U}-\eqref{assumption U max}, which will finally allow to argue as in the previous section and give $L^\infty$ control on $\rho_1,\rho_2$.

Numerical simulations for a diffusive prey-predator system are presented at the end of this section.

\subsection{Properties of discrete solutions}
Arguing as in the case of one equation, the optimality conditions for the Wasserstein step and for the Fisher-Rao step first give

\begin{lem}
\label{lem:L1 bound}
For all $k \geqslant 0$ and $i\in \{1,2\}$, we have
\begin{equation}
\label{eq:PMAX_W system}
 \|\rho_{i,h}^{k+1/2} \|_{L^1} = \|\rho_{i,h}^k\|_{L^1}.
\end{equation}
Moreover, there exists $C_i\equiv C(U_i) >0$ (uniform in $k$) such that
\begin{equation}
\label{encadrement FR_sup system}
 \rho_{i,h}^{k+1}(x) \leqslant (1+ C_i h) \rho_{i,h}^{k+1/2}(x) \qquad a.e.
\end{equation}

\end{lem}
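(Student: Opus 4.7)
The plan is to mimic closely the scalar arguments from Lemma 3.5 and Lemma 3.6, exploiting the structural parallel between the optimality systems for the single-species scheme and the semi-implicit system scheme \eqref{eq:splitting scheme Wass-FR syst}. The two claims should be handled separately.

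For the mass conservation identity \eqref{eq:PMAX_W system}, there is essentially nothing to prove: the very definition of the Wasserstein step in \eqref{eq:splitting scheme Wass-FR syst} enforces the hard constraint $|\rho|=|\rho_{i,h}^k|$, so any minimizer $\rho_{i,h}^{k+1/2}$ trivially satisfies $\|\rho_{i,h}^{k+1/2}\|_{L^1}=\|\rho_{i,h}^k\|_{L^1}$.

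The crux is the upper bound \eqref{encadrement FR_sup system}. First I would write down the Euler--Lagrange condition for the semi-implicit Fisher--Rao step. Since $\mathcal U_i(\rho\,|\,\rho_{1,h}^k,\rho_{2,h}^k)=\int_\Omega U_i[\rho_{1,h}^k,\rho_{2,h}^k]\,\rho$ is linear in $\rho$, the frozen potential $U_i[\rho_{1,h}^k,\rho_{2,h}^k]$ plays exactly the role of $V_2$ in \eqref{opt Fisher-Rao}; similarly $G_i$ plays the role of $F_2$. Following the derivation in \cite[section 4.2]{GM}, the optimality condition should therefore read
$$
\bigl(\sqrt{\rho_{i,h}^{k+1}}-\sqrt{\rho_{i,h}^{k+1/2}}\bigr)\sqrt{\rho_{i,h}^{k+1}}=-\frac{h}{2}\,\rho_{i,h}^{k+1}\bigl(G_i'(\rho_{i,h}^{k+1})+U_i[\rho_{1,h}^k,\rho_{2,h}^k]\bigr)\qquad\text{a.e.}
$$
On the support of $\rho_{i,h}^{k+1}$ I would then divide by $\sqrt{\rho_{i,h}^{k+1}}$ and use $G_i'\geqslant 0$ (automatic from \eqref{assumption reaction} since $m_2>1$) together with the one-sided bound $U_i[\rho_{1,h}^k,\rho_{2,h}^k]\geqslant -K$ coming from \eqref{assumption U} to obtain
$$
\sqrt{\rho_{i,h}^{k+1}}-\sqrt{\rho_{i,h}^{k+1/2}}\leqslant hK\sqrt{\rho_{i,h}^{k+1}}\qquad\text{a.e.}
$$
Rearranging gives $\sqrt{\rho_{i,h}^{k+1}}\leqslant(1-hK)^{-1}\sqrt{\rho_{i,h}^{k+1/2}}$ for $h$ small enough, and squaring together with the elementary expansion $(1-hK)^{-2}\leqslant 1+C_i h$ valid for $h\leqslant h_0(K)$ yields \eqref{encadrement FR_sup system} with $C_i$ depending only on the constant $K$ from \eqref{assumption U}.

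The main thing to be careful about is that the estimate is genuinely \emph{uniform} in $k$: this is guaranteed because assumption \eqref{assumption U} provides a single constant $K$ that bounds $U_i[\rho_1,\rho_2]$ from below for \emph{all} nonnegative $\rho_1,\rho_2\in L^1(\Omega)$, without any a priori $L^1$ or $L^\infty$ control on the discrete solutions at step $k$. This is precisely why assumption \eqref{assumption U} is formulated as an unconditional lower bound rather than a conditional bound in terms of norms; the stronger assumption \eqref{assumption U max} will only be needed later, when one needs a two-sided estimate analogous to \eqref{encadrement FR_inf} and ultimately propagates $L^\infty$ bounds along the scheme.
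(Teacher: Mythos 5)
Your proof is correct and matches the paper's approach exactly: the paper itself disposes of the mass-conservation identity as a direct consequence of the hard constraint in the Wasserstein step, and handles \eqref{encadrement FR_sup system} by invoking the argument of Lemma \ref{lem encadrement} with the frozen potential $U_i[\rho_{1,h}^k,\rho_{2,h}^k]$ in the role of $V_2$, relying on the unconditional lower bound in \eqref{assumption U} to get a constant uniform in $k$. Your expansion of that reference — including the observation that linearity of $\mathcal U_i$ in $\rho$ yields an Euler--Lagrange condition of exactly the form \eqref{opt Fisher-Rao}, and the remark explaining why \eqref{assumption U} rather than \eqref{assumption U max} is what makes the bound $k$-uniform at this stage — is accurate and faithful to the intended argument.
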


\begin{proof}
The first part is simply the mass conservation in the Wasserstein step, and the second part follows the lines of the proof of \eqref{encadrement FR_sup} in Lemma \ref{lem encadrement} using assumption \eqref{assumption U}.
\end{proof} 

As a direct consequence we have uniform control on the $L^1$-norms:

\begin{lem}
\label{lem:L1_bound_systems}
For all $T>0$ there exist constants $C_T,C_T'>0$ such that, for all $t\in [0,T]$,
$$ \| \rho_{i,h}(t)\|_{L^1}, \| \trho_{i,h}(t)\|_{L^1} \leqslant C_T$$
and
\begin{equation}
\label{encadrement potential V system}
\| V_i[\rho_{1,h}(t), \rho_{2,h}(t)] \|_{W^{1,\infty}},\| V_i[\trho_{1,h}(t), \trho_{2,h}(t)] \|_{W^{1,\infty}}  \leqslant C'_T.
\end{equation}
\end{lem}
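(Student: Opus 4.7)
The plan is to combine the two ingredients provided by Lemma \ref{lem:L1 bound} into a Gronwall-type iteration on the discrete $L^1$ mass, and then feed the resulting bound into the continuity assumption \eqref{assumption V system} on the potentials.

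First, I would observe that the mass-preservation identity \eqref{eq:PMAX_W system} says the Wasserstein half-step does not change the total mass, while integrating the pointwise inequality \eqref{encadrement FR_sup system} over $\Omega$ gives
$$
\|\rho_{i,h}^{k+1}\|_{L^1}\leqslant (1+C_i h)\|\rho_{i,h}^{k+1/2}\|_{L^1}=(1+C_i h)\|\rho_{i,h}^{k}\|_{L^1}.
$$
Iterating in $k$ yields $\|\rho_{i,h}^{k}\|_{L^1}\leqslant (1+C_i h)^k\|\rho_{i,0}\|_{L^1}$ for every $k\geqslant 0$. For $t\in[0,T]$ we have by construction of the piecewise-constant interpolant (see \eqref{piecewise-constant curves}) that $\rho_{i,h}(t)=\rho_{i,h}^{k+1}$ with $k+1\leqslant \lfloor T/h\rfloor+1$, hence
$$
\|\rho_{i,h}(t)\|_{L^1}\leqslant (1+C_i h)^{\lfloor T/h\rfloor+1}\|\rho_{i,0}\|_{L^1}\leqslant e^{C_i T}(1+C_i h)\|\rho_{i,0}\|_{L^1},
$$
which is bounded uniformly in $h$ (say for $h\leqslant 1$) by a constant $C_T$ depending only on $T$, $C_i=C(U_i)$ and $\|\rho_{i,0}\|_{L^1}$. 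The same bound holds for $\tilde\rho_{i,h}(t)=\rho_{i,h}^{k+1/2}$ using the intermediate mass identity.

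Next, for the $W^{1,\infty}$ control of the potentials, it suffices to apply the first line of hypothesis \eqref{assumption V system} pointwise in $t$: for each $t\in[0,T]$,
$$
\|V_i[\rho_{1,h}(t),\rho_{2,h}(t)]\|_{W^{1,\infty}(\Omega)}\leqslant K\bigl(1+\|\rho_{1,h}(t)\|_{L^1}+\|\rho_{2,h}(t)\|_{L^1}\bigr)\leqslant K(1+2C_T)=:C'_T,
$$
and analogously with $\tilde\rho_{i,h}$ in place of $\rho_{i,h}$.

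There is no real obstacle here: the whole argument is a discrete Gronwall iteration starting from the one-step bound \eqref{encadrement FR_sup system}, exactly as in the scalar proof of Proposition \ref{prop:maximum principle}, followed by a direct use of the growth assumption on $V_i$. The only mild subtlety is making sure $h$ is taken small enough (say $h\leqslant h_0$) so that the constants from Lemma \ref{lem encadrement}/Lemma \ref{lem:L1 bound} can be applied at every step up to $k\leqslant \lfloor T/h\rfloor$; this is harmless since the constants $C_i$ depend only on $U_i$ and not on $k$ or $h$.
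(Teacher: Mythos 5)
Your proof is correct and follows essentially the same route as the paper: integrate the one-step bound \eqref{encadrement FR_sup system}, combine with the mass-conservation identity \eqref{eq:PMAX_W system}, iterate to get $(1+C_ih)^k\|\rho_{i,0}\|_{L^1}\leqslant e^{C_iT}\|\rho_{i,0}\|_{L^1}$, and then read off the $W^{1,\infty}$ bound from assumption \eqref{assumption V system}. Your added care with the interpolant index and the smallness of $h$ is a harmless elaboration of what the paper leaves implicit.
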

\begin{proof}
Integrating \eqref{encadrement FR_sup system} and iterating with \eqref{eq:PMAX_W system}, we obtain for all $t\leqslant T$ and $k\leqslant \lfloor T/h\rfloor$
$$
\| \rho_{i,h}^{k+1} \|_{L^1} \leqslant (1+C_ih)\| \rho_{i,h}^{k} \|_{L^1}  \leqslant (1+C_ih)^k\| \rho_{i,0}\|_{L^1} \leqslant e^{C_iT}\| \rho_{i,0}\|_{L^1}
.$$
Then \eqref{encadrement potential V system} follows from our assumption \eqref{assumption V system} on the interactions.
\end{proof}

Combining \eqref{encadrement FR_sup system} and \eqref{encadrement potential V system}, we deduce
\begin{prop}
\label{prop:encadrment L1}
For all $T>0$, there exists $M_T$ such that for all $t\in [0,T]$,
$$
\| \rho_{i,h}(t)\|_{L^\infty}, \| \trho_{i,h}(t)\|_{L^\infty} \leqslant M_T.
$$
Then, there exists $c_i \equiv c(M_T,U_i) \geq 0$, such that, for all $k \leqslant \lfloor T/h\rfloor$ and $h \leqslant h_0(U_1,U_2)$,
$$ 
(1-c_ih) \rho_{i,h}^{k+1/2}   \leqslant \rho_{i,h}^{k+1}.
$$
In particular, there exist $M_T'>0$ such that for all $t\in [0,T]$,
$$
\| \rho_{i,h}(t) - \trho_{i,h}(t) \|_{L^1} \leqslant hM_T'.
$$
\end{prop}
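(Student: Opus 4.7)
The plan is to mirror the proof of the scalar analogue Proposition~\ref{prop:maximum principle}, treating each equation $i\in\{1,2\}$ separately and regarding the coupling potentials $V_i^k:=V_i[\rho_{1,h}^k,\rho_{2,h}^k]$ and $U_i^k:=U_i[\rho_{1,h}^k,\rho_{2,h}^k]$ as frozen, semi-implicit data at step $k$. The key preliminary observation is that the $L^1$ bound already established in Lemma~\ref{lem:L1_bound_systems}, combined with assumptions \eqref{assumption V system} and \eqref{assumption U max}, yields uniform bounds $\|V_i^k\|_{W^{1,\infty}(\Omega)}\leqslant C_T'$ and $\|U_i^k\|_{L^\infty(\Omega)}\leqslant K_{C_T}$ for all $k\leqslant\lfloor T/h\rfloor$. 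This removes the nonlocal coupling issue and essentially reduces each elementary step to the scalar setting, with bounded (though $k$-dependent) data.

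For the $L^\infty$ bound I would argue by induction on $k$. On the Wasserstein step, the generalized Otto maximum principle from \cite[lemma 2]{PT} (already invoked in Lemma~\ref{lem:maximum_principle_W_step}) applies with the frozen potential $V_i^k$ and produces a pointwise Barenblatt-type bound $\rho_{i,h}^{k+1/2}(x)\leqslant (F_i')^{-1}(C_k-V_i^k(x))$. The subsequent Fisher-Rao step multiplies this pointwise by at most $(1+C_i h)$ via \eqref{encadrement FR_sup system}, and I would update to a new constant $C_{k+1}$ satisfying a recurrence of the form $C_{k+1}\leqslant(1+\alpha h)C_k+\beta h$. Iterating over $k\leqslant N=\lfloor T/h\rfloor$ then yields $C_N\leqslant e^{\alpha T}(C_0+\beta/\alpha)$, uniformly in $h$, whence the required bound $M_T$. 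This is the main technical obstacle: the additive contribution per step, which comes both from the change of potential $V_i^{k+1}-V_i^k$ and from the amplification factor $(1+C_i h)^{m_i-1}$ of $F_i'$ through the reaction, must itself be $O(h)$; otherwise the accumulated error over the $N\sim 1/h$ steps would blow up. This is precisely why one tracks the refined pointwise Barenblatt bound rather than only the scalar $L^\infty$ norm.

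With $M_T$ in hand, the lower bound $(1-c_i h)\rho_{i,h}^{k+1/2}\leqslant\rho_{i,h}^{k+1}$ follows exactly as in \eqref{encadrement FR_inf}: the Euler-Lagrange equation for the $i$-th Fisher-Rao step reads $(\sqrt{\rho_{i,h}^{k+1}}-\sqrt{\rho_{i,h}^{k+1/2}})\sqrt{\rho_{i,h}^{k+1}}=-\tfrac{h}{2}\rho_{i,h}^{k+1}(G_i'(\rho_{i,h}^{k+1})+U_i^k)$, and the estimates $\rho_{i,h}^{k+1}\leqslant 2M_T$ (from \eqref{encadrement FR_sup system} for small $h$), monotonicity of $G_i'$, and $\|U_i^k\|_{L^\infty}\leqslant K_{C_T}$ close the computation, with $c_i=c(M_T,U_i)$. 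Finally, combining this lower bound with the upper bound \eqref{encadrement FR_sup system} gives the two-sided pointwise estimate $-c_i h\,\rho_{i,h}^{k+1/2}\leqslant \rho_{i,h}^{k+1}-\rho_{i,h}^{k+1/2}\leqslant C_i h\,\rho_{i,h}^{k+1/2}$ almost everywhere, and integrating in $\Omega$ together with the uniform $L^1$ bound from Lemma~\ref{lem:L1_bound_systems} immediately yields $\|\rho_{i,h}(t)-\trho_{i,h}(t)\|_{L^1}\leqslant hM_T'$, in the same spirit as the end of the proof of Proposition~\ref{prop:maximum principle}.
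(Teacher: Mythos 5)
Your overall strategy matches what the paper gestures at: the paper's proof of this proposition is essentially a one-line reference ("The first $L^\infty$ estimate can be found in [PT, Lemma 2], and the rest of our statement can be proved exactly as in Lemma~\ref{lem encadrement} and Proposition~\ref{prop:maximum principle}"), and your elaboration — freezing the semi-implicit potentials, invoking the generalized Otto maximum principle in Barenblatt form for the $\W$ step, the one-step $(1+C_ih)$ amplification for the $\FR$ step, then the two-sided bound and $L^1$ integration at the end — is a faithful, more explicit version of that plan. The lower bound and the $L^1$ difference estimate are correct and follow the scalar template closely.

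The genuine gap is precisely the one you name and then leave open. You assert that one can "update to a new constant $C_{k+1}$ satisfying a recurrence of the form $C_{k+1}\leqslant(1+\alpha h)C_k+\beta h$," but this is not established, and it is not an automatic consequence of tracking the Barenblatt bound. Carrying out the computation for $F_i(z)=\tfrac{1}{m_i-1}z^{m_i}$, passing the inductive bound $\rho^{k+1}_{i,h}\leqslant(1+C_ih)(F_i')^{-1}(C_k-V_i^k(x))$ into the form $(F_i')^{-1}(C_{k+1}-V_i^{k+1}(x))$ requires
$$
C_{k+1}\ \geqslant\ (1+C_ih)^{m_i-1}C_k\ +\ \bigl(V_i^{k+1}(x)-V_i^k(x)\bigr)\ -\ \bigl((1+C_ih)^{m_i-1}-1\bigr)V_i^k(x)
\quad\text{for a.e.\ }x,
$$
and the third term is indeed $O(h)\,C_T'$, but the middle term is $\|V_i^{k+1}-V_i^k\|_{L^\infty}$, which is a priori only $O(1)$ (one Wasserstein step can move mass by $O(\sqrt h)$ in $\W$, not $O(h)$ in $L^1$). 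So the additive per-step contribution is not visibly $O(h)$ and the naive iteration blows up. Tracking the Barenblatt bound instead of the bare $L^\infty$ norm is necessary (it removes the $O(1)$ loss already present for a fixed potential), but it does not by itself dispose of the potential's dependence on $k$. Closing this would require either an argument showing $\sum_{k\leqslant N}\|V_i^{k+1}-V_i^k\|_{L^\infty}\leqslant C_T$ — for instance, combining the telescopic Wasserstein estimate (which must be obtained without appealing to the $L^\infty$ bound, via the multiplicative control of $\F_i$ along the $\FR$ step) with a Wasserstein-Lipschitz property of $V_i$ that holds for the convolution potentials $K_{i,j}\ast\rho_j$ but is strictly stronger than the $L^1$-Lipschitz hypothesis \eqref{assumption V system} — or a version of [PT, Lemma 2] that directly yields an $O(h)$ one-step increment in $L^\infty$. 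The paper itself leaves all of this to the citation, so your sketch is consistent with the paper's approach but shares the same unresolved point; it should not be presented as if the recurrence closes without further work.
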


\begin{proof}
The first $L^\infty$ estimate can be found in \cite[Lemma 2]{PT}, and the rest of our statement can be proved exactly as in Lemma \ref{lem encadrement} and Proposition \ref{prop:maximum principle}.
\end{proof}

\subsection{Estimates and convergences}

Since we proved that $V_1[\rho_{1,h},\rho_{2,h}]$ and $V_2[\rho_{1,h},\rho_{2,h}]$ are bounded in $L^\infty([0,T],W^{1,\infty}(\Omega))$, we can argue exactly as in the previous section for the Wasserstein step and obtain
\begin{eqnarray}
\label{eq:sum telescopic MK system}
\frac{1}{4h}\W^2(\rho_{i,h}^{k+1/2}, \rho_{i,h}^k) \leqslant \F_i(\rho_{i,h}^k)-\F_i(\rho_{i,h}^{k+1/2})+C_Th,
\end{eqnarray}
see \eqref{sum telescopique MK 1}-\eqref{sum telescopique MK 2} for details.
Since $\trho_{1,h}$ and $\trho_{2,h}$ are uniformly bounded in $L^1(\Omega)$ (Lemma~\ref{lem:L1_bound_systems}), our assumption \eqref{assumption U max} ensures that $U_1[\rho_{1,h}^{k+1/2},\rho_{2,h}^{k+1/2}]$ and $U_2[\rho_{1,h}^{k+1/2},\rho_{2,h}^{k+1/2}]$ are uniformly bounded in $L^\infty(\Omega)$.
Proposition \ref{prop:encadrment L1} then allows to argue exactly as in \eqref{sum telescopique FR 1}-\eqref{sum telescopique FR 2}-\eqref{décroissance F2} for the Fisher-Rao step, and we get
\begin{eqnarray}
\label{eq:sum telescopic FR system}
\frac{1}{2h}\FR^2(\rho_h^{k+1}, \rho_h^{k+1/2}) \leqslant \G_i(\rho_{i,h}^{k+1/2})-\G_i(\rho_{i,h}^{k+1}) +C_Th.
\end{eqnarray}

The dissipation of $\F_i$ along the Fisher-Rao step is obtained in the same way as Proposition \ref{prop:decr F1} and we omit the details:
\begin{prop}
For all $T>0$ and $i\in\{1,2\}$, there exist constants $C_T,C'_T>0$ such that, for all $k\geqslant 0$ with $hk \leqslant T$, 
$$\begin{array}{c}
\F_i(\rho_{i,h}^{k+1}) \leqslant \F_i(\rho_{i,h}^{k+1/2}) +C_Th,\\
\G_i(\rho_{i,h}^{k+1/2}) \leqslant \G_i(\rho_{i,h}^{k+1}) +C'_Th.
\end{array}$$
\end{prop}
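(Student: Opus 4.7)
The plan is to adapt the proof of Proposition~\ref{prop:decr F1} essentially verbatim. Both claimed inequalities quantify how the internal energies $\F_i$ and $\G_i$ can vary across a single Fisher-Rao step $\rho_{i,h}^{k+1/2}\to\rho_{i,h}^{k+1}$, and in both cases they should follow from an a.e.\ pointwise comparison between $\rho_{i,h}^{k+1/2}$ and $\rho_{i,h}^{k+1}$ combined with the uniform $L^1\cap L^\infty$ control already in hand. Note in particular that the first inequality is \emph{not} a gradient-flow dissipation statement (the Fisher-Rao step minimises $\G_i+\mathcal U_i$, not $\F_i+\V_i$), so $\F_i$ may well grow, but only by $O(h)$ per step; the second inequality, on the contrary, controls how fast the genuinely dissipated $\G_i$ can drop.

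The main input is the two-sided sandwich bound
$$
(1-c_ih)\,\rho_{i,h}^{k+1/2}(x)\leqslant \rho_{i,h}^{k+1}(x)\leqslant (1+C_ih)\,\rho_{i,h}^{k+1/2}(x)\qquad\text{a.e.,}
$$
delivered by Lemma~\ref{lem:L1 bound} and Proposition~\ref{prop:encadrment L1}, together with the uniform estimates $\|\rho_{i,h}^{k+1/2}\|_\infty,\|\rho_{i,h}^{k+1}\|_\infty\leqslant M_T$ and $\|\rho_{i,h}^{k+1}-\rho_{i,h}^{k+1/2}\|_{L^1}\leqslant h M'_T$, all valid for $k\leqslant \lfloor T/h\rfloor$ with constants independent of $k$ and $h$. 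For the first inequality I would split by the form of $F_i$. In the Porous Media case $F_i(z)=\frac{1}{m-1}z^m$, the monotonicity of $z\mapsto z^m$ and the upper sandwich bound give $\F_i(\rho_{i,h}^{k+1})\leqslant (1+C_ih)^m\F_i(\rho_{i,h}^{k+1/2})$, so a first-order expansion $(1+C_ih)^m-1=O(h)$ combined with the uniform $L^\infty$ bound produces the $C_Th$ increment. In the Boltzmann case $F_i(z)=z\log z-z$, I would reproduce the dichotomy of Proposition~\ref{prop:decr F1}: on $\{\rho_{i,h}^{k+1}\geqslant e^{-1}\}$ the function $z\log z$ is increasing and one invokes the upper sandwich bound, while on the complement it is decreasing and one invokes the lower sandwich bound; each contribution costs at most $C_Th$ after using $\log(1+Ch)\leqslant Ch$ together with the $L^1\cap L^\infty$ control, and the linear $-z$ term is absorbed via the $L^1$-closeness $\|\rho_{i,h}^{k+1}-\rho_{i,h}^{k+1/2}\|_{L^1}\leqslant h M'_T$.

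For the second inequality only the Porous Media argument is needed, since assumption \eqref{assumption reaction} forces $G_i(z)=\frac{1}{m-1}z^{m}$. Rewriting the lower sandwich bound as $\rho_{i,h}^{k+1/2}\leqslant (1-c_ih)^{-1}\rho_{i,h}^{k+1}\leqslant (1+\widetilde C_ih)\rho_{i,h}^{k+1}$ for $h$ small and using the monotonicity of $z\mapsto z^m$ yields $\G_i(\rho_{i,h}^{k+1/2})\leqslant (1+\widetilde C_ih)^m\G_i(\rho_{i,h}^{k+1})$; expanding to first order in $h$ and invoking the uniform $L^1\cap L^\infty$ control on $\rho_{i,h}^{k+1}$ delivers $\G_i(\rho_{i,h}^{k+1/2})-\G_i(\rho_{i,h}^{k+1})\leqslant C'_Th$.

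The only genuinely new technical point compared with the scalar case is that the sandwich constants $c_i,C_i$ a priori depend on $\|U_i[\rho_{1,h},\rho_{2,h}]\|_{L^\infty}$, which is not directly controlled through assumption~\eqref{assumption U} alone. This is precisely why Lemma~\ref{lem:L1_bound_systems} and Proposition~\ref{prop:encadrment L1} were proved first: the uniform $L^1$ bound on the densities propagates, via \eqref{assumption V system}--\eqref{assumption U max}, into a uniform $L^\infty$ control on the nonlocal potentials $V_i,U_i$, which then upgrades to a uniform $L^\infty$ bound on $\rho_{i,h}^{k+1/2},\rho_{i,h}^{k+1}$ and ultimately renders the sandwich constants uniform in $k\leqslant \lfloor T/h\rfloor$. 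With that preparatory chain in place the elementary monotonicity computation above closes the argument without further difficulty.
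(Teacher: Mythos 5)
Your proposal is correct and takes essentially the same route as the paper, which in fact omits the proof and simply states that it ``is obtained in the same way as Proposition~\ref{prop:decr F1}''; you supply exactly the intended argument, namely the two-sided sandwich bounds from Lemma~\ref{lem:L1 bound} and Proposition~\ref{prop:encadrment L1} together with the uniform $L^1\cap L^\infty$ control, applied to the Porous Media and Boltzmann nonlinearities as in Proposition~\ref{prop:decr F1}. Your closing remark on why the sandwich constants are uniform in $k$ (via \eqref{assumption V system}--\eqref{assumption U max} and Lemma~\ref{lem:L1_bound_systems}) correctly identifies the only genuinely new ingredient relative to the scalar case.
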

From \eqref{eq:sum telescopic MK system} and \eqref{eq:sum telescopic FR system} this immediately gives a telescopic sum
$$
\frac{1}{2h}\left(\W^2(\rho_{i,h}^{k}, \rho_{i,h}^{k+1/2})+ \FR^2(\rho_h^{k+1/2}, \rho_h^{k})\right)
\leqslant 2[\F_i(\rho_{i,h}^k)-\F_i(\rho_{i,h}^{k+1})] + C_Th
$$
which in turn yields an approximate $\frac 12$-H\"older estimate (with respect to the $\KFR$ distance) as in Proposition \ref{prop:1/2_holder}.
The rest of the proof of Theorem \ref{theo:existence system} is then identical to section \ref{sec:KFRsplitting} and we omit the details.

\subsection{Numerical application: prey-predator systems}
Our constructive scheme can be implemented numerically, by simply discretizing \eqref{eq:splitting scheme Wass-FR syst} in space.
We use the augmented Lagrangian method ALG-JKO from \cite{BCL} to solve the Wasserstein step, and the Fisher-Rao step is just a convex pointwise minimization problem.
Indeed, it is known \cite{GM,LMS} that $\FR^2(\rho,\mu)=4\|\sqrt{\rho}-\sqrt{\mu}\|^2_{L^2}$, hence the Fisher-Rao step in \eqref{eq:splitting scheme Wass-FR syst} is a mere convex pointwise minimization problem of the form: for all $x \in \Omega$ (and omitting all indexes $\rho_{i,h}$),
\begin{equation*}
\label{eq:pointwise_convex_FR}
\rho^{k+1}(x) = \argmin\limits_{\rho \geq 0} \left\{ 4\left| \sqrt{\rho} - \sqrt{\rho^{k+1/2}(x)} \right|^2 + 2h F(\rho)\right\}.
\end{equation*}
This is easily solved using any simple Newton procedure.

Figure \eqref{alg2-figure prey-predator} shows the numerical solution of the following diffusive prey-predator system
$$
\left\{\begin{array}{l}
\partial_t \rho_1 - \Delta \rho_1 - \dive(\rho_1 \nabla V_1[\rho_1,\rho_2])=A\rho_1\left( 1-\rho_1 \right) - B\frac{\rho_1 \rho_2}{1+\rho_1},\\
\partial_t \rho_2 - \Delta \rho_2 - \dive(\rho_2 \nabla V_2[\rho_1,\rho_2])=\frac{B\rho_1 \rho_2}{1+\rho_1}-C\rho_2,
\end{array}\right..
$$ 
Here the $\rho_1$ species are preys and $\rho_2$ are predators, see for example \cite{M}, the parameters $A=10,C=5,B=70$, and the interactions are chosen as
$$
V_{1}[\rho_1,\rho_2]=|x|^2 \ast \rho_1 - |x|^2 \ast \rho_2,
\quad
V_2[\rho_1,\rho_2]=|x|^2 \ast \rho_1 + |x|^2 \ast \rho_2.
$$
In \eqref{eq:KFR-general system} this corresponds to
$$
G_1(\rho_1) =A\frac{\rho_1^2}{2}, \quad G_2(\rho_2) =0, 
\quad 
U_1[\rho_1,\rho_2]=\frac{B\rho_2}{1+\rho_1}-A, 
\quad 
U_2[\rho_1,\rho_2]=-\frac{B\rho_1}{1+\rho_1}+C.
$$
Of course, $U_1$ and $U_2$ satisfy assumptions \eqref{assumption U} and \eqref{assumption U max}, and then Theorem \ref{theo:existence system} gives a solution of the prey-predator system.
As before, we shall disregard the uniqueness issue for the sake of simplicity.
Figure \eqref{figure masse prey-predator} depicts the mass evolution of the prey and predator species: we observe the usual oscillations in time with phase opposition, a characteristic behaviour for Lotka-Volterra types of systems.

\begin{figure}[h!]

\begin{tabular}{c@{\hspace{0mm}}c@{\hspace{1mm}}c@{\hspace{1mm}}c@{\hspace{1mm}}c@{\hspace{1mm}}c@{\hspace{1mm}}c@{\hspace{1mm}}c@{\hspace{10mm}}}

\centering
$t=0$ & $t=0.15$ & $t=0.35$ & $t=0.5$ &$t=0.65$ & $t=0.85$ & $t=1$\\

\includegraphics[ scale=0.15]{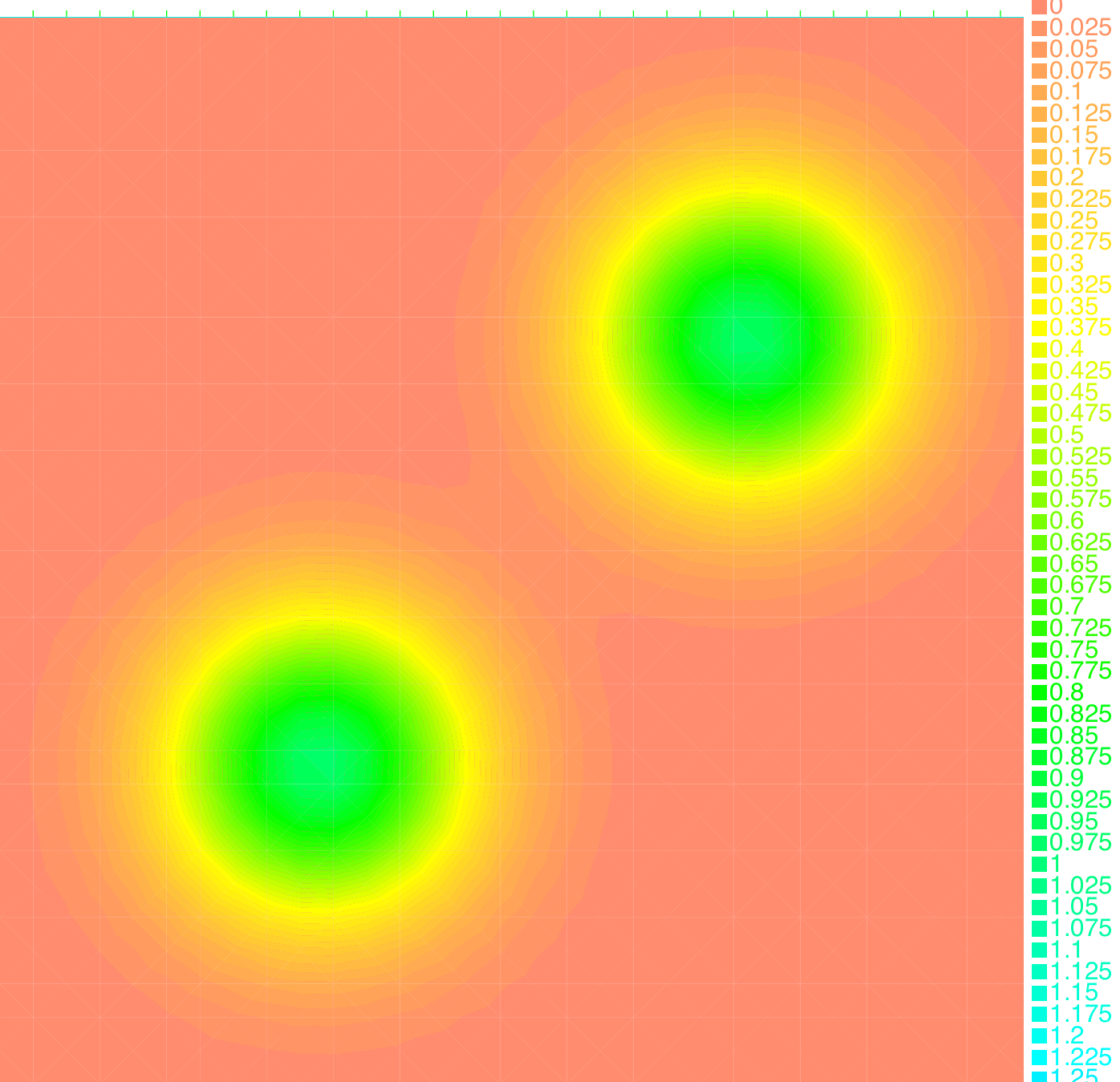}&
\includegraphics[ scale=0.15]{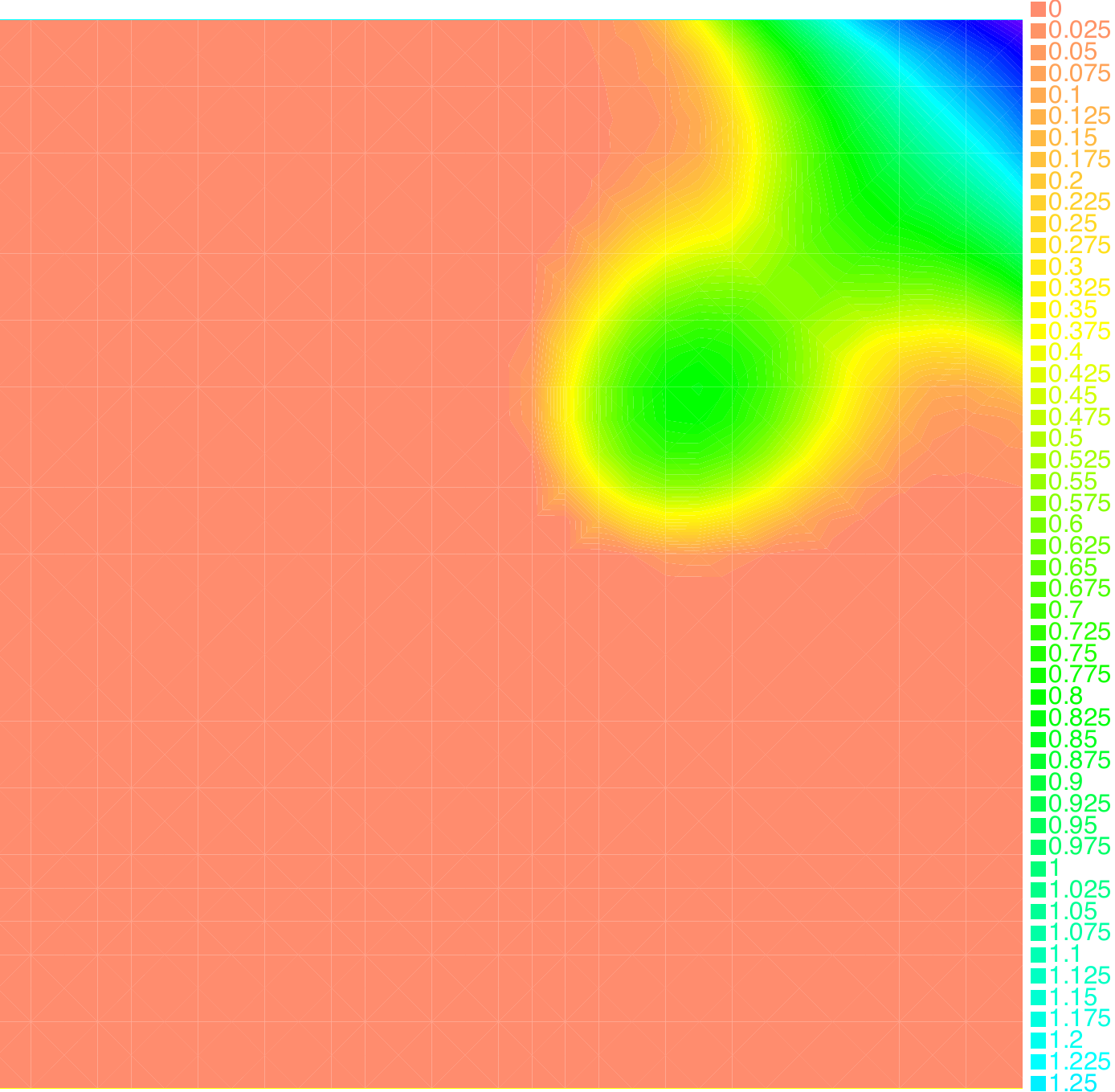}&
\includegraphics[ scale=0.15]{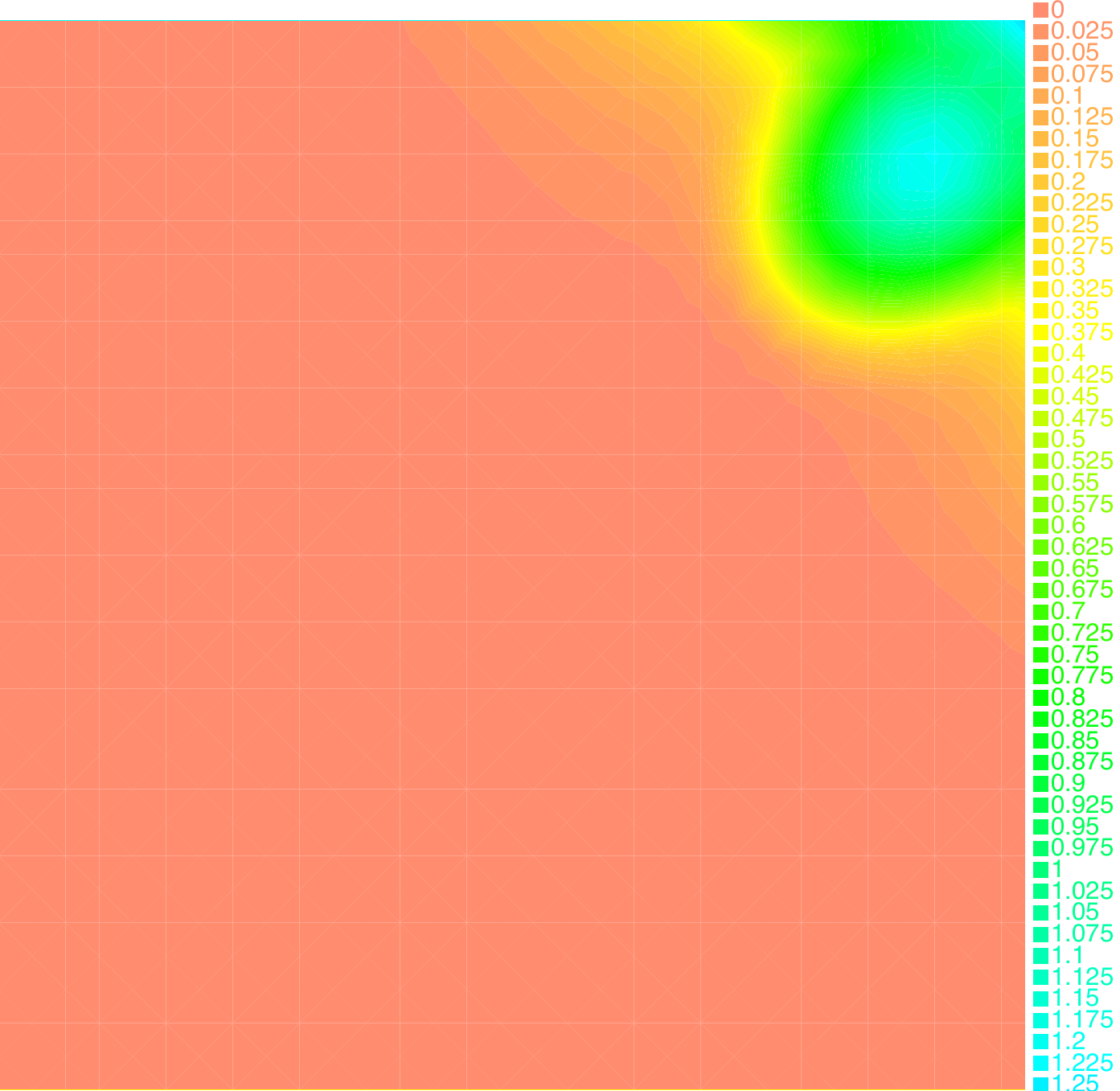}&
\includegraphics[ scale=0.15]{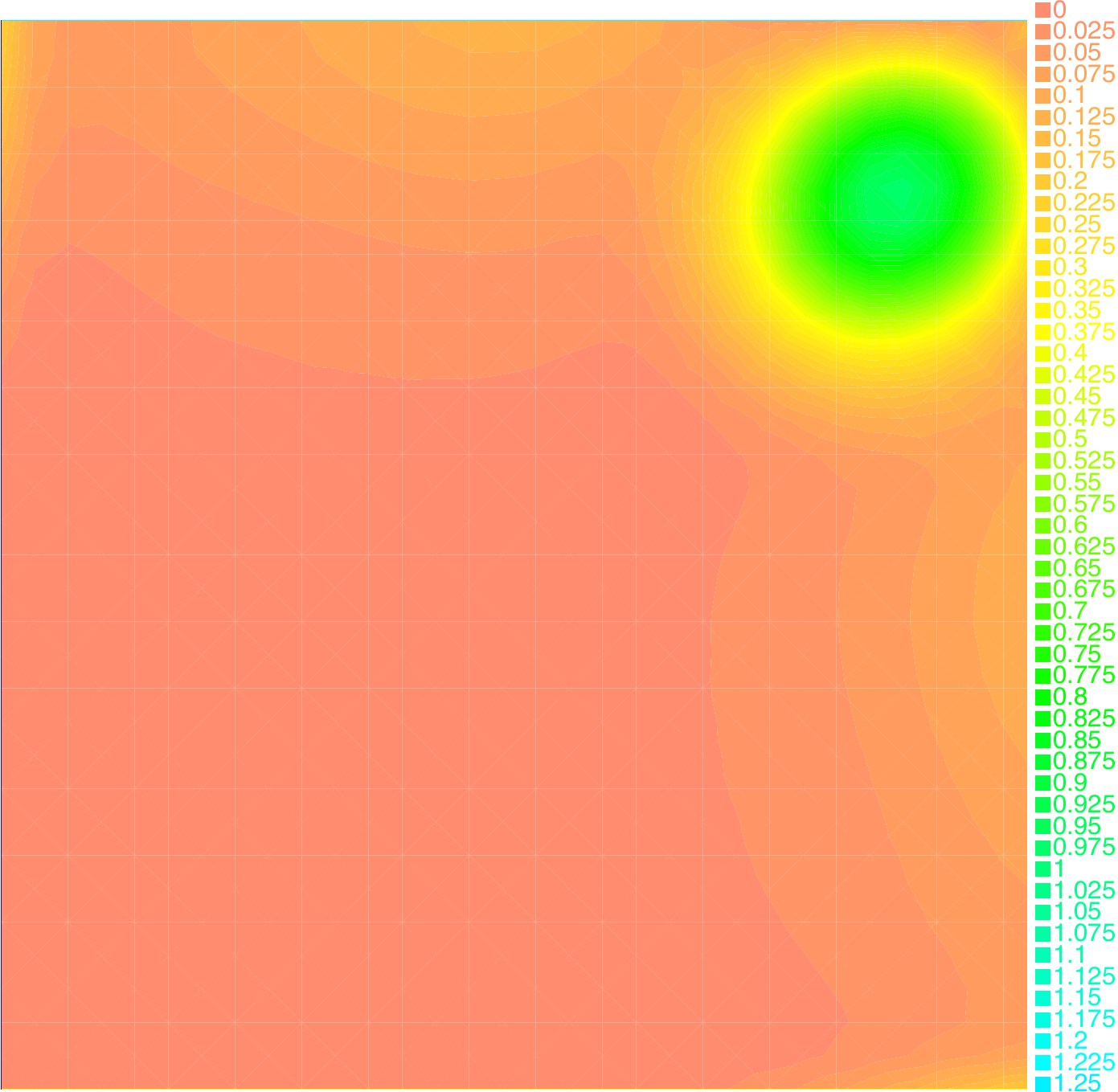}&
\includegraphics[ scale=0.15]{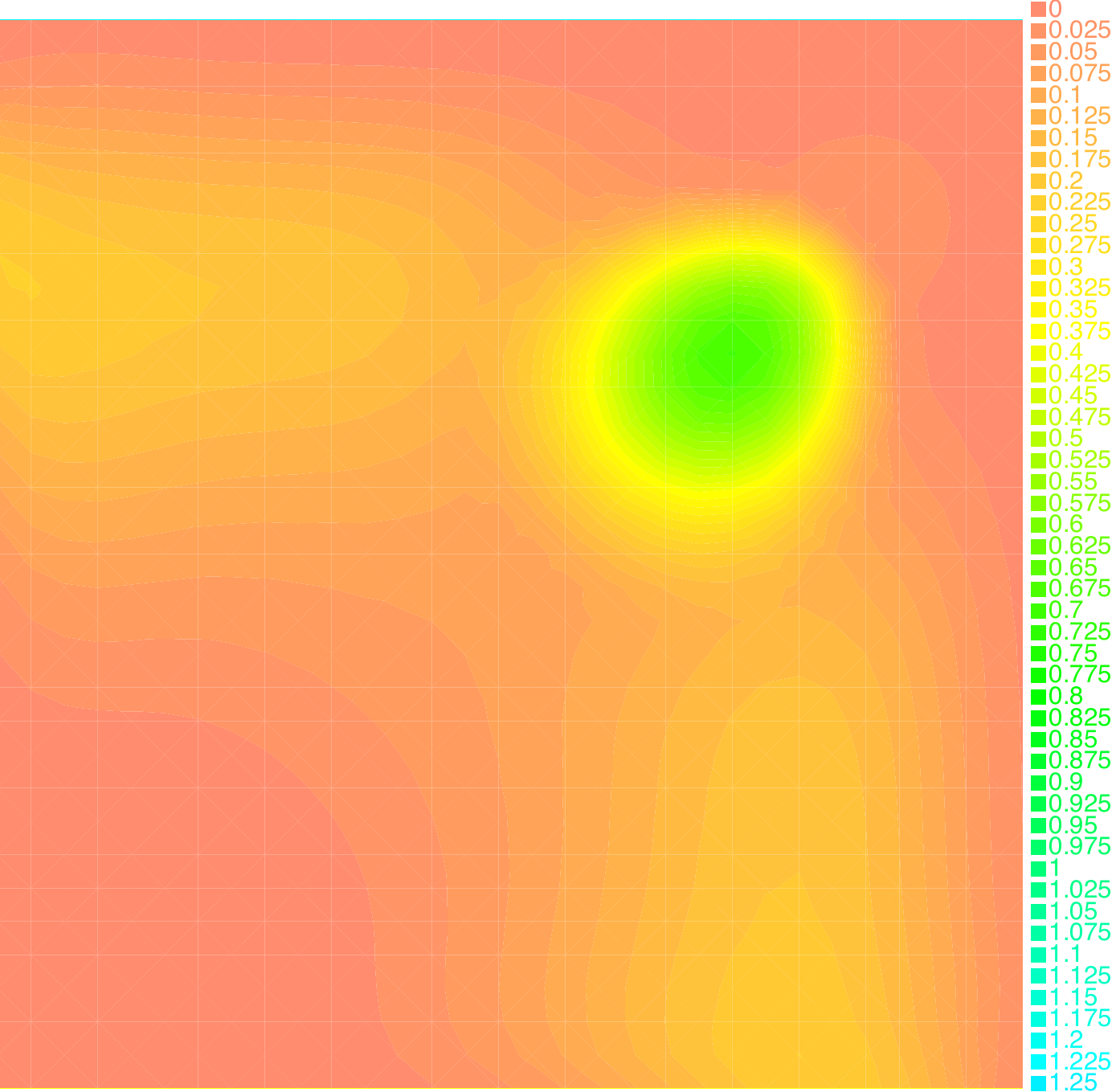}&
\includegraphics[ scale=0.15]{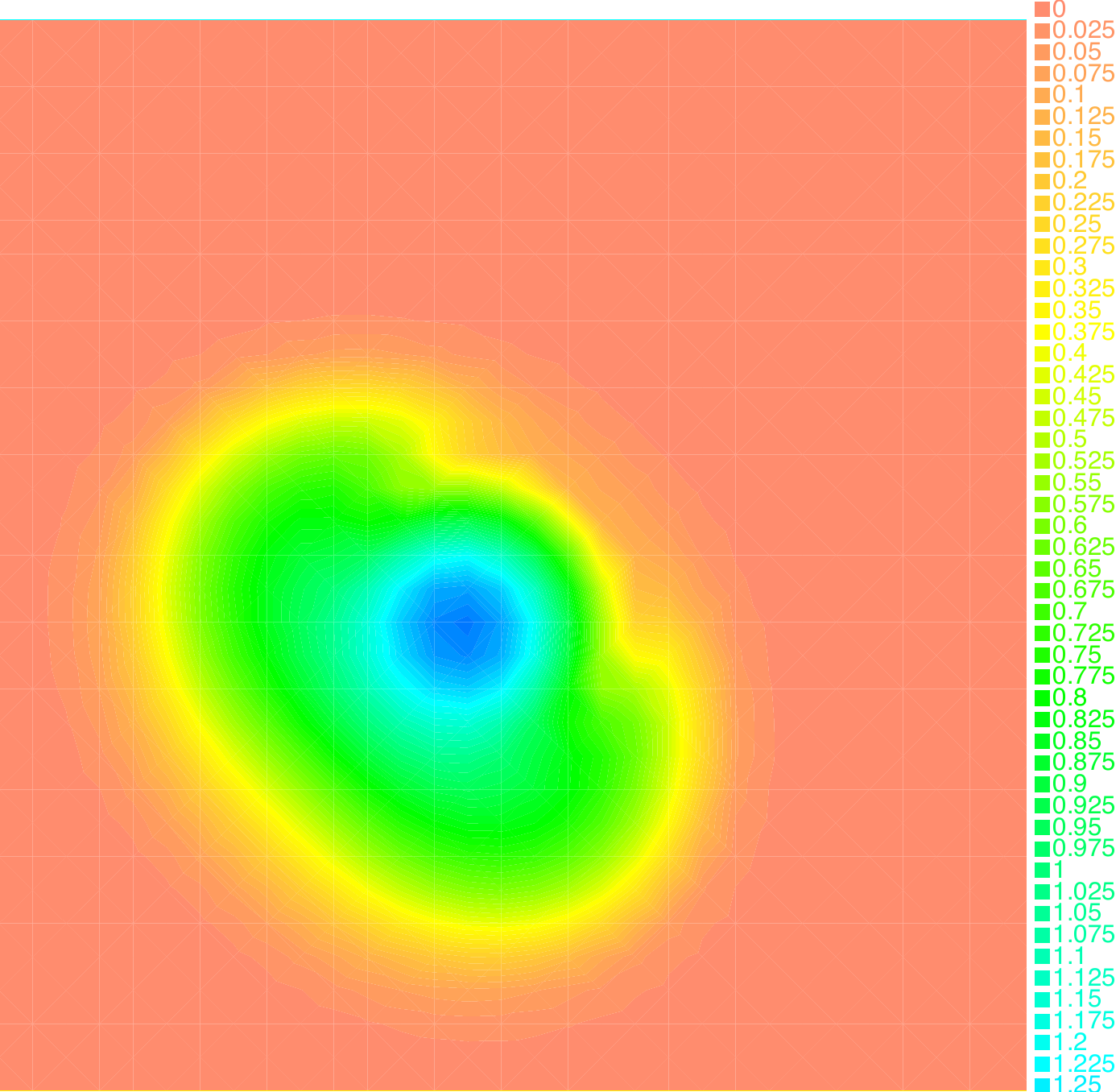}&
\includegraphics[ scale=0.15]{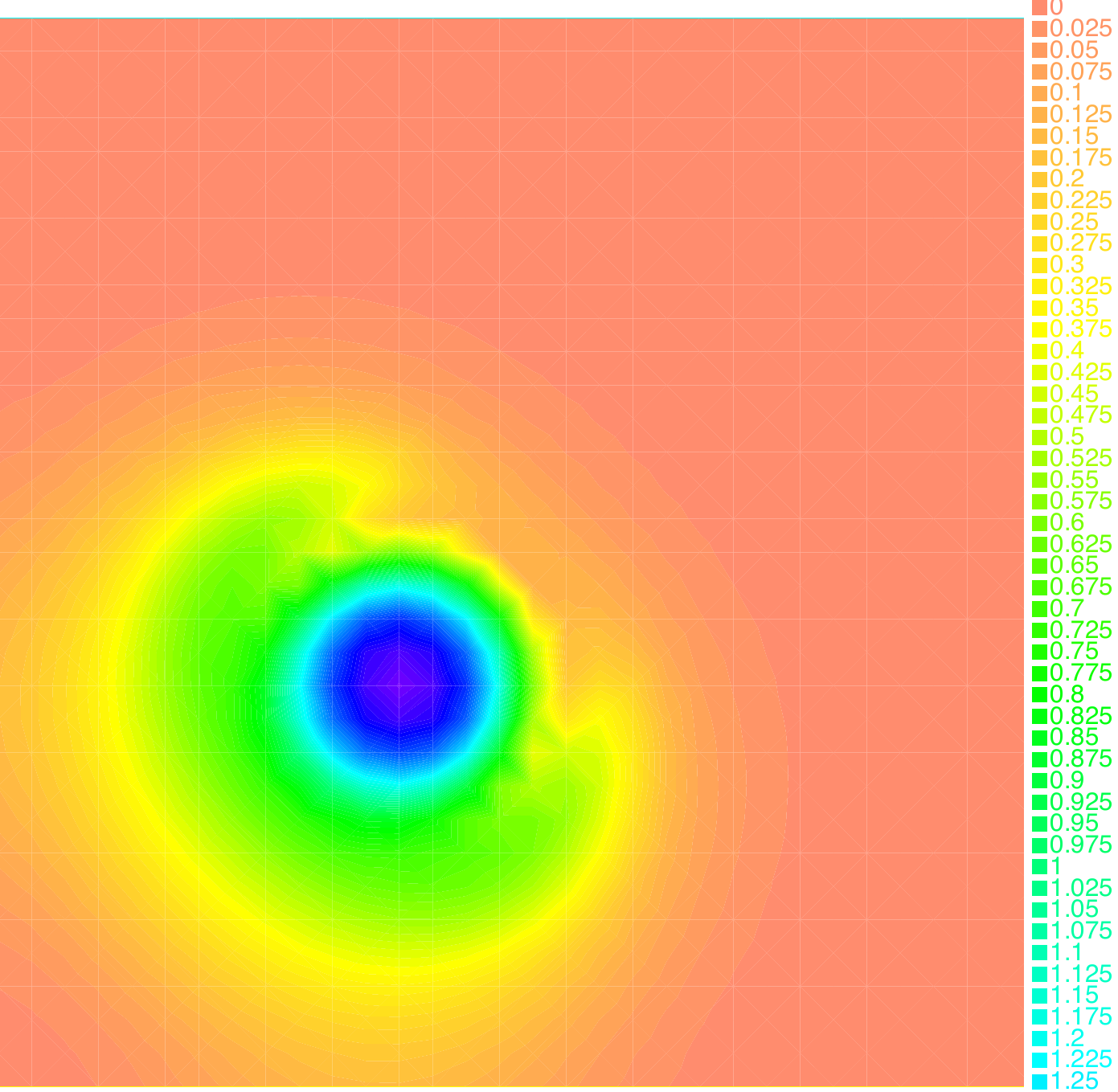}\\

\includegraphics[ scale=0.15]{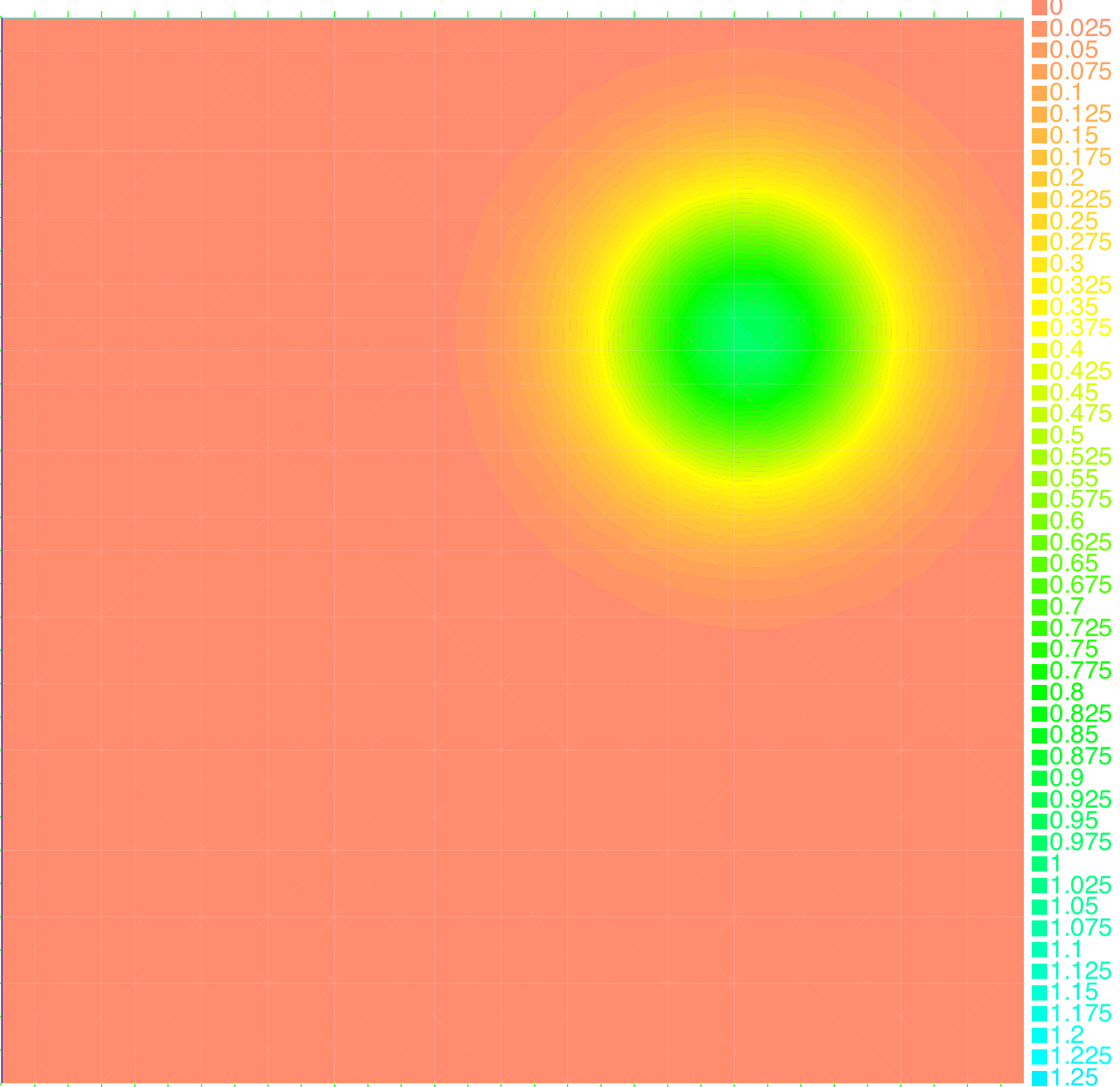}&
\includegraphics[ scale=0.15]{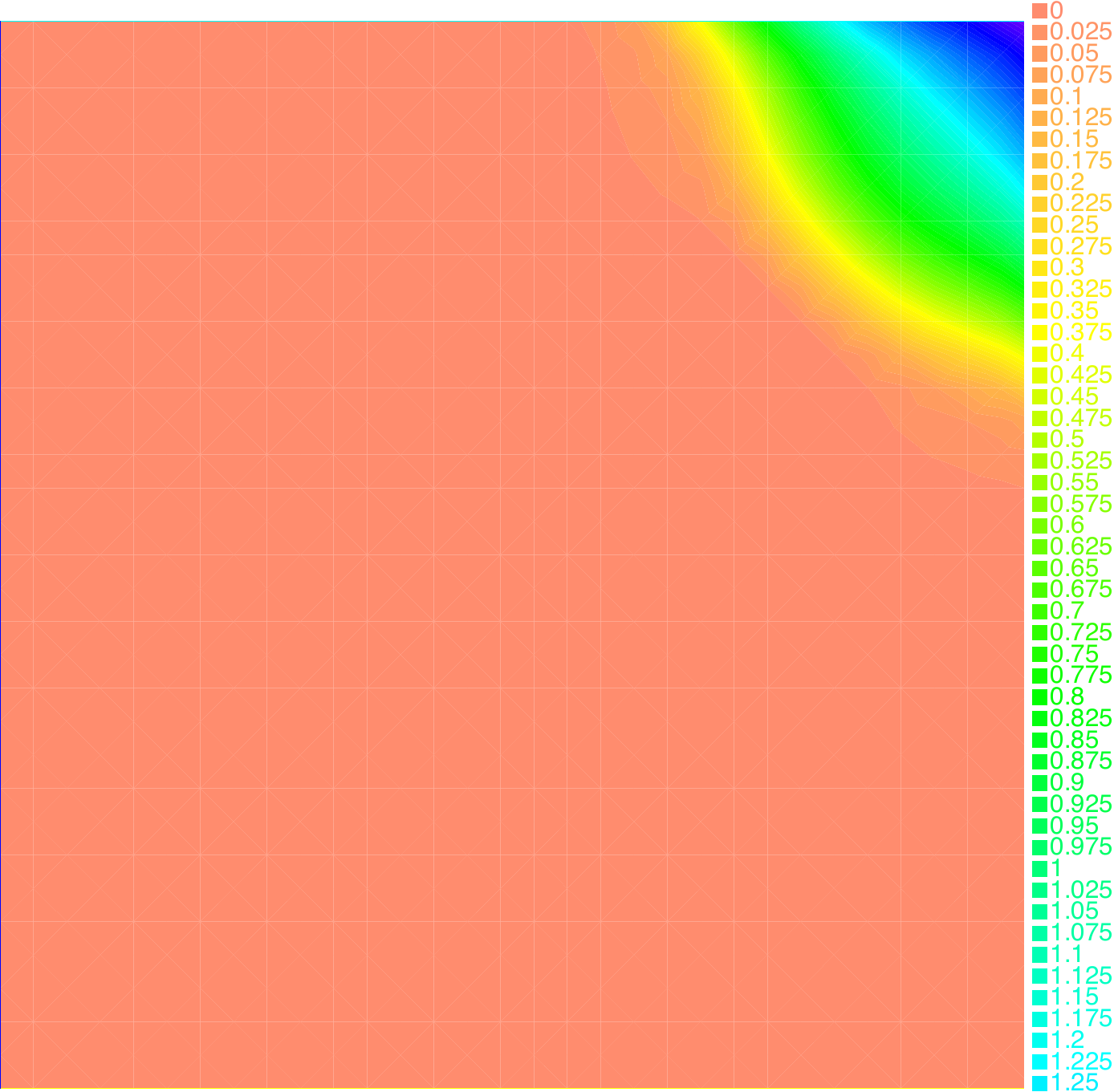}&
\includegraphics[ scale=0.15]{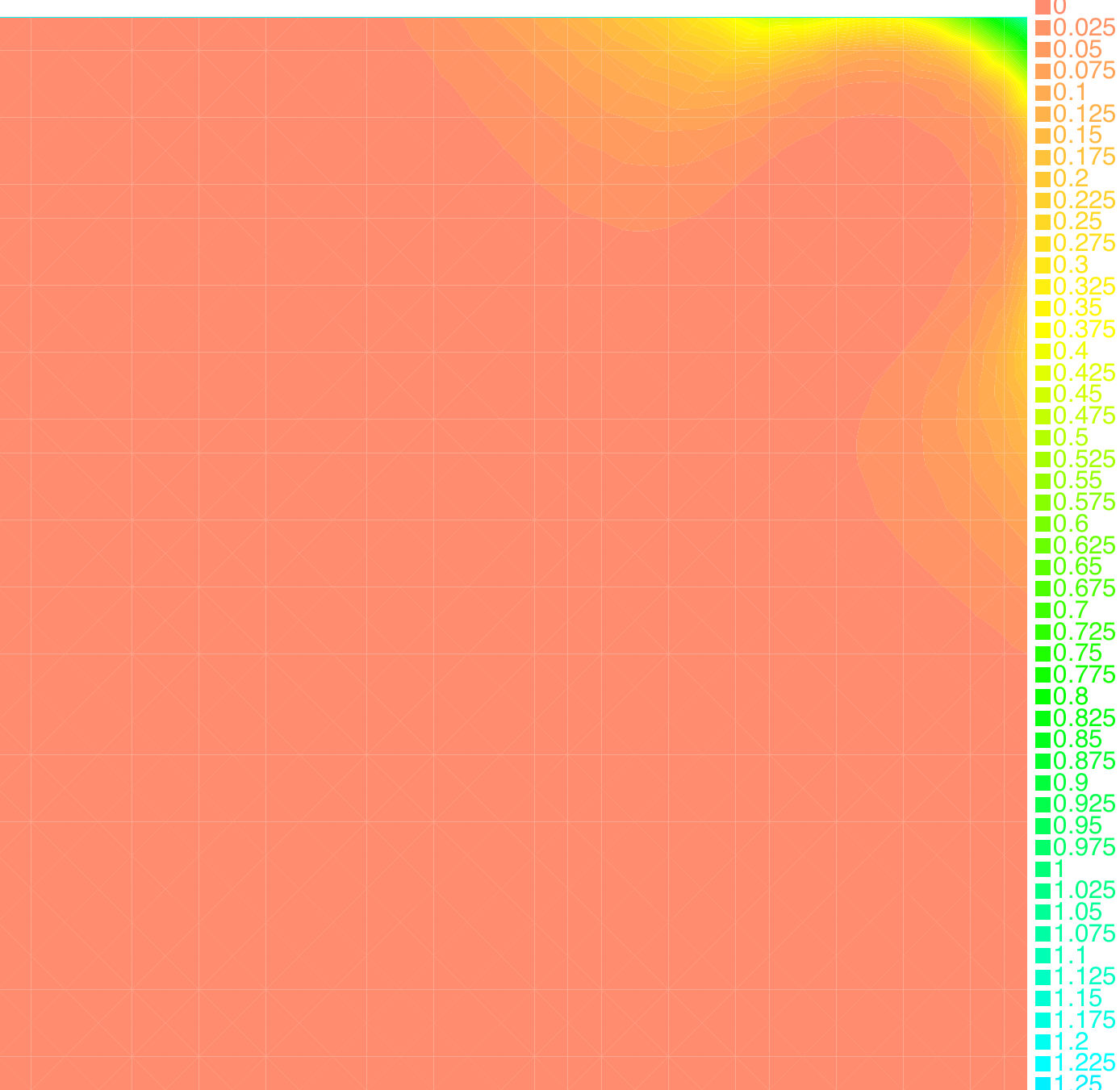}&
\includegraphics[ scale=0.15]{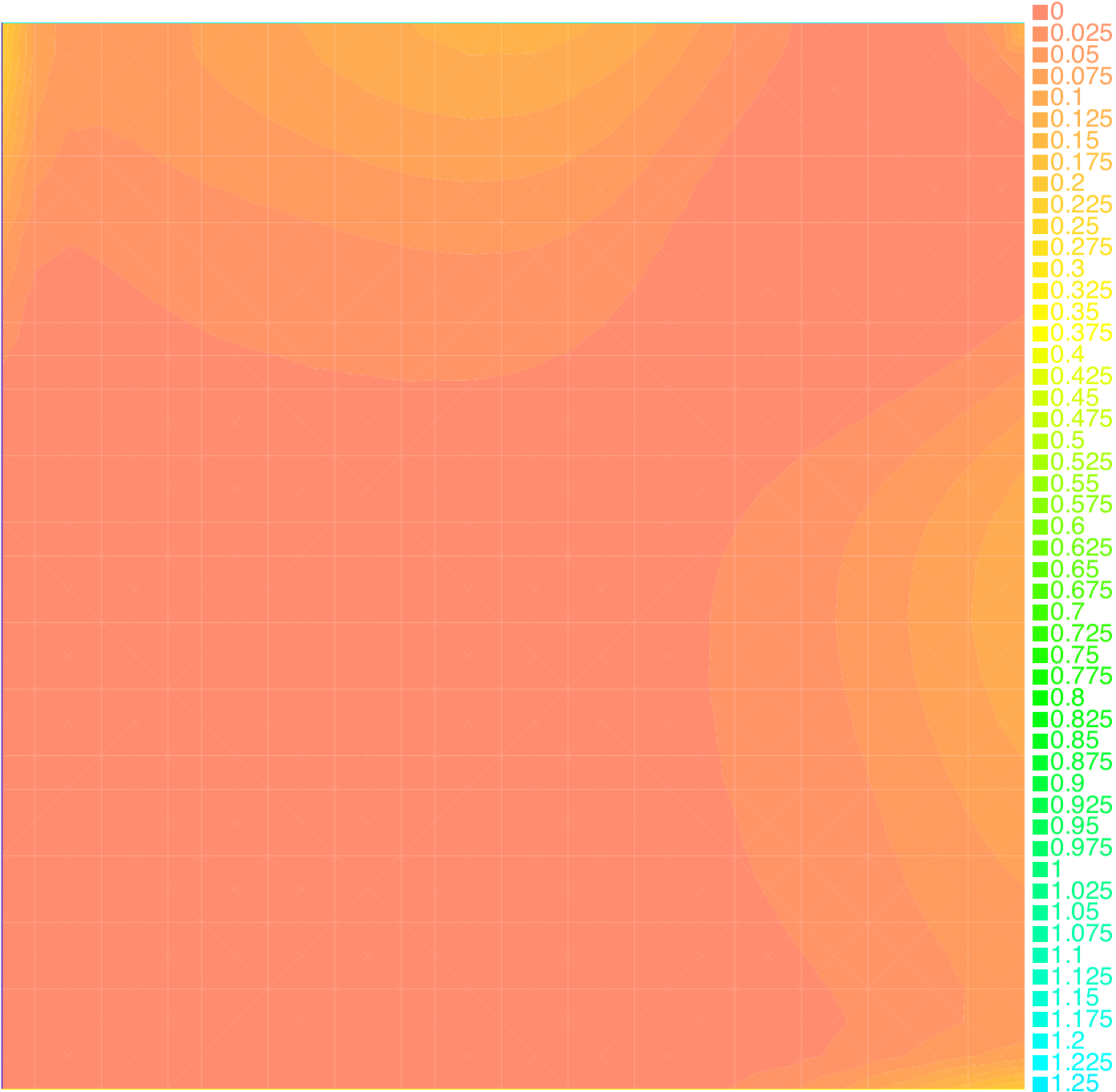}&
\includegraphics[ scale=0.15]{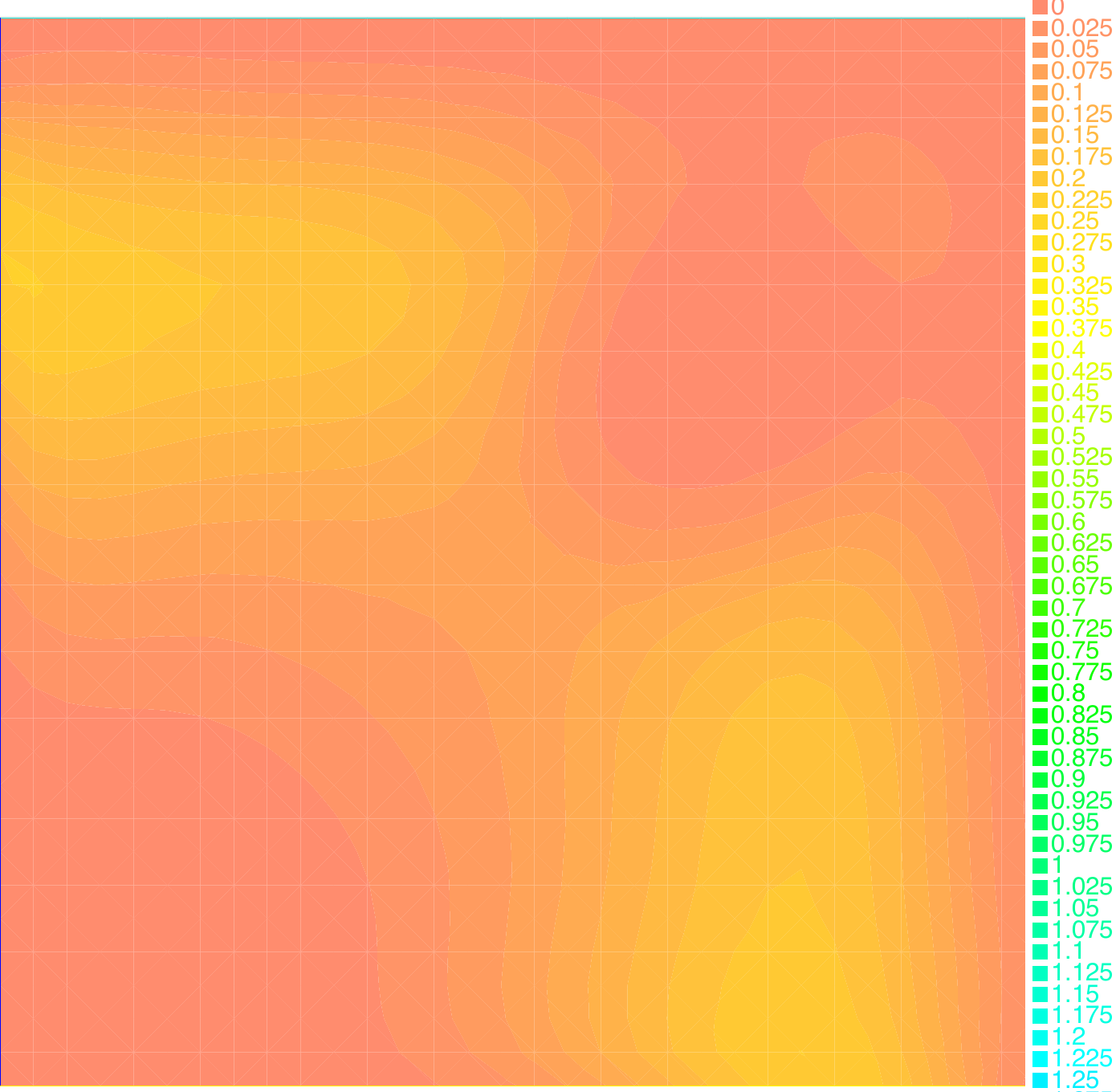}&
\includegraphics[ scale=0.15]{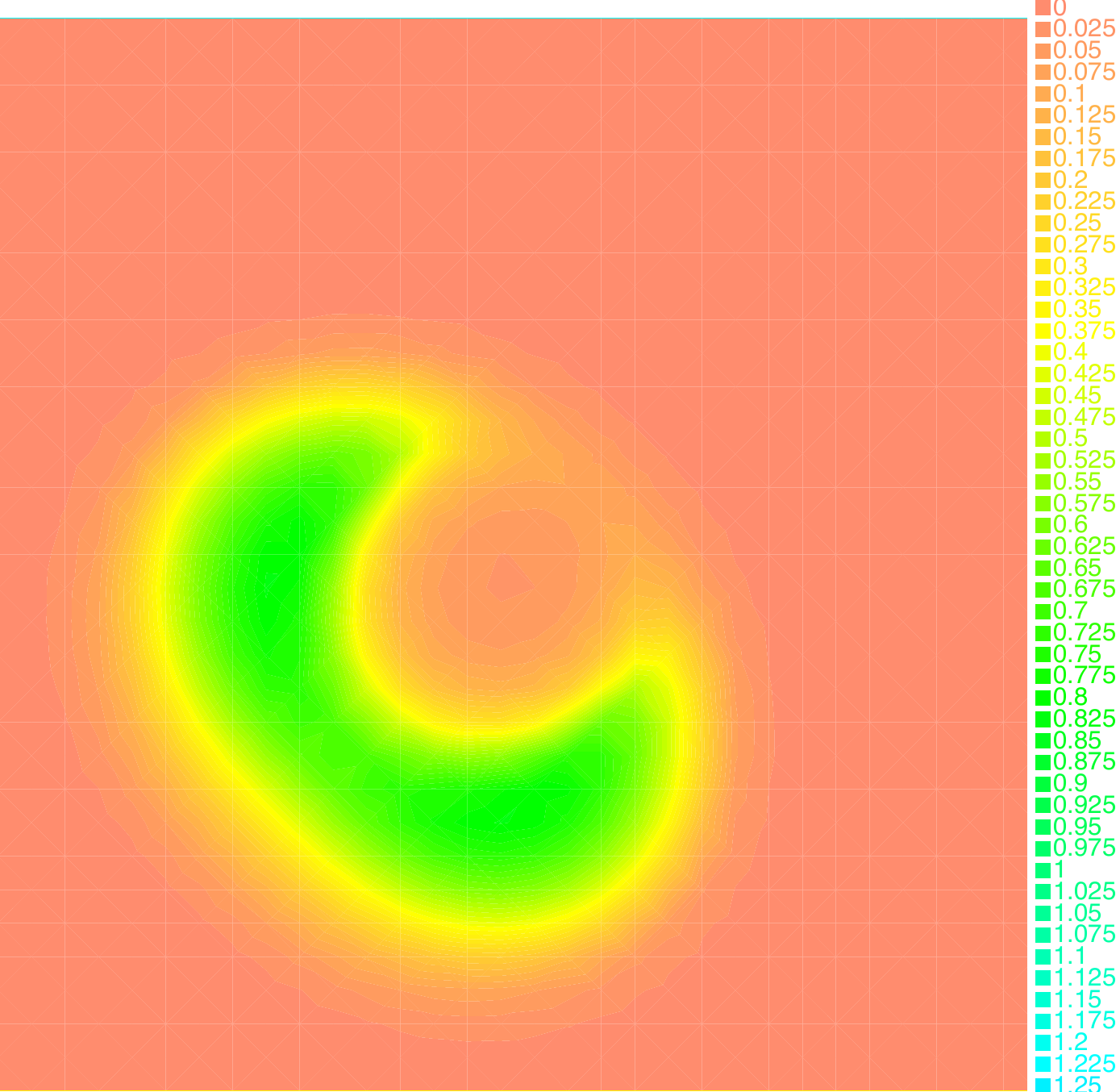}&
\includegraphics[ scale=0.15]{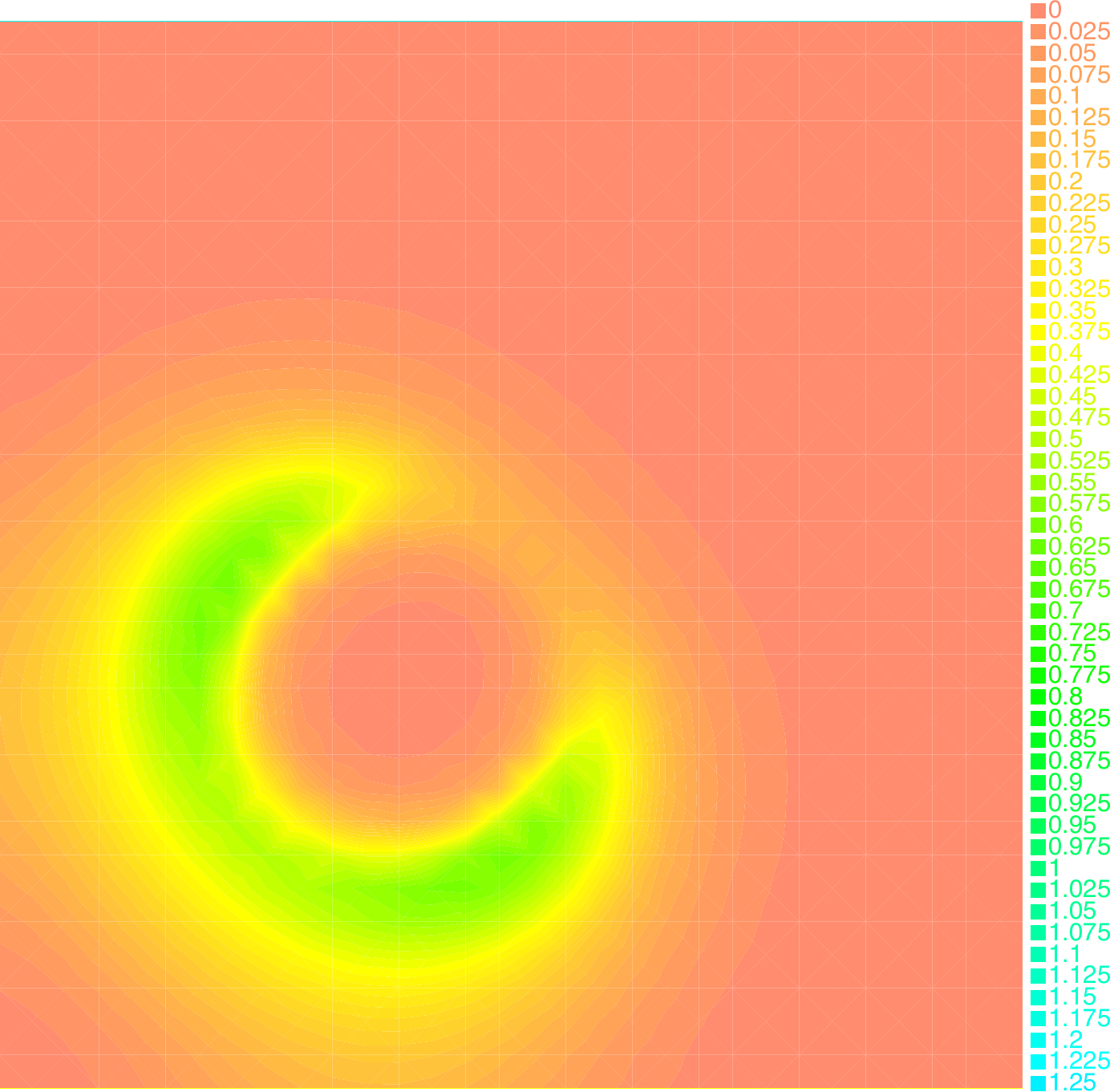}\\

\includegraphics[ scale=0.15]{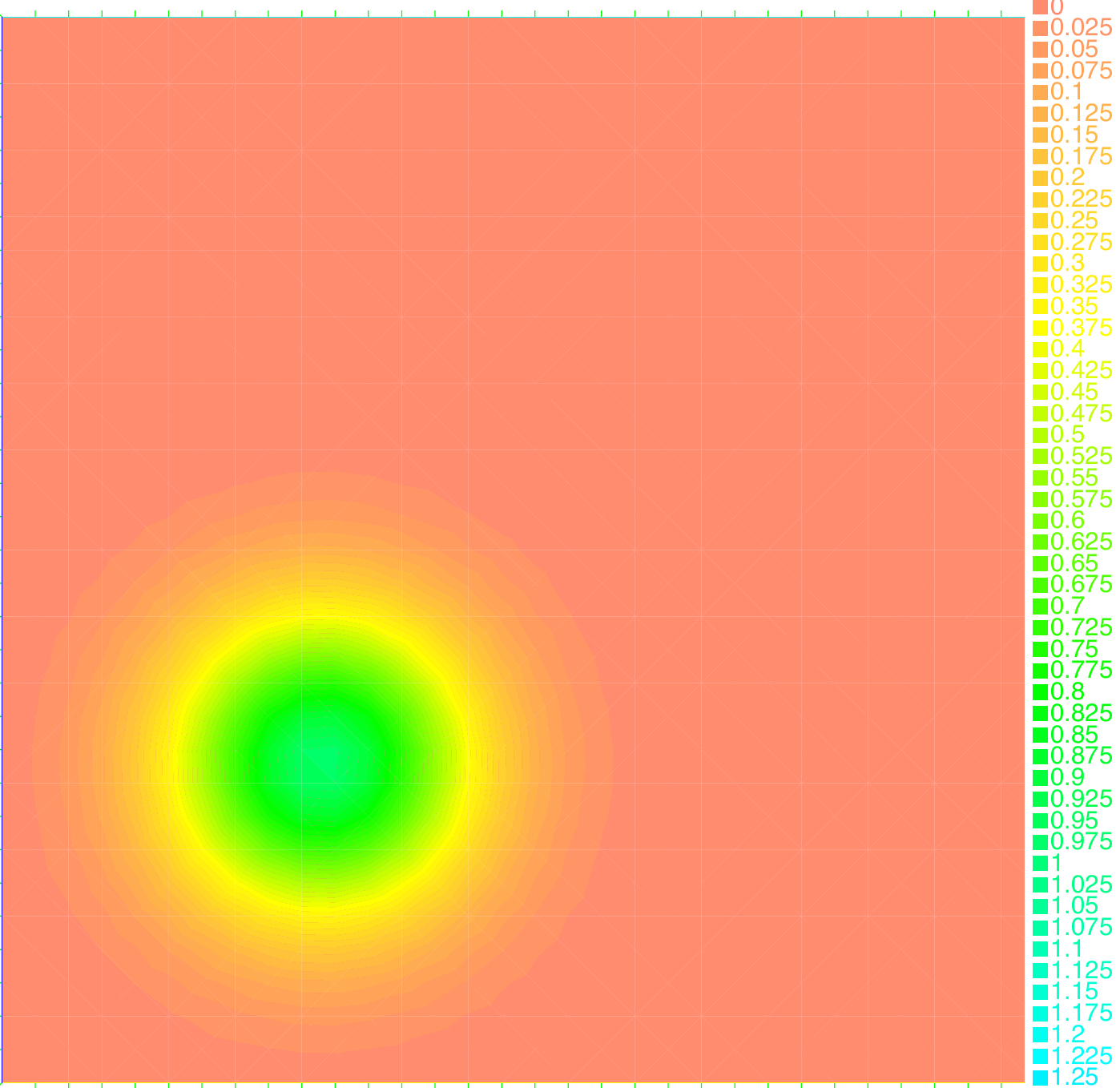}&
\includegraphics[ scale=0.15]{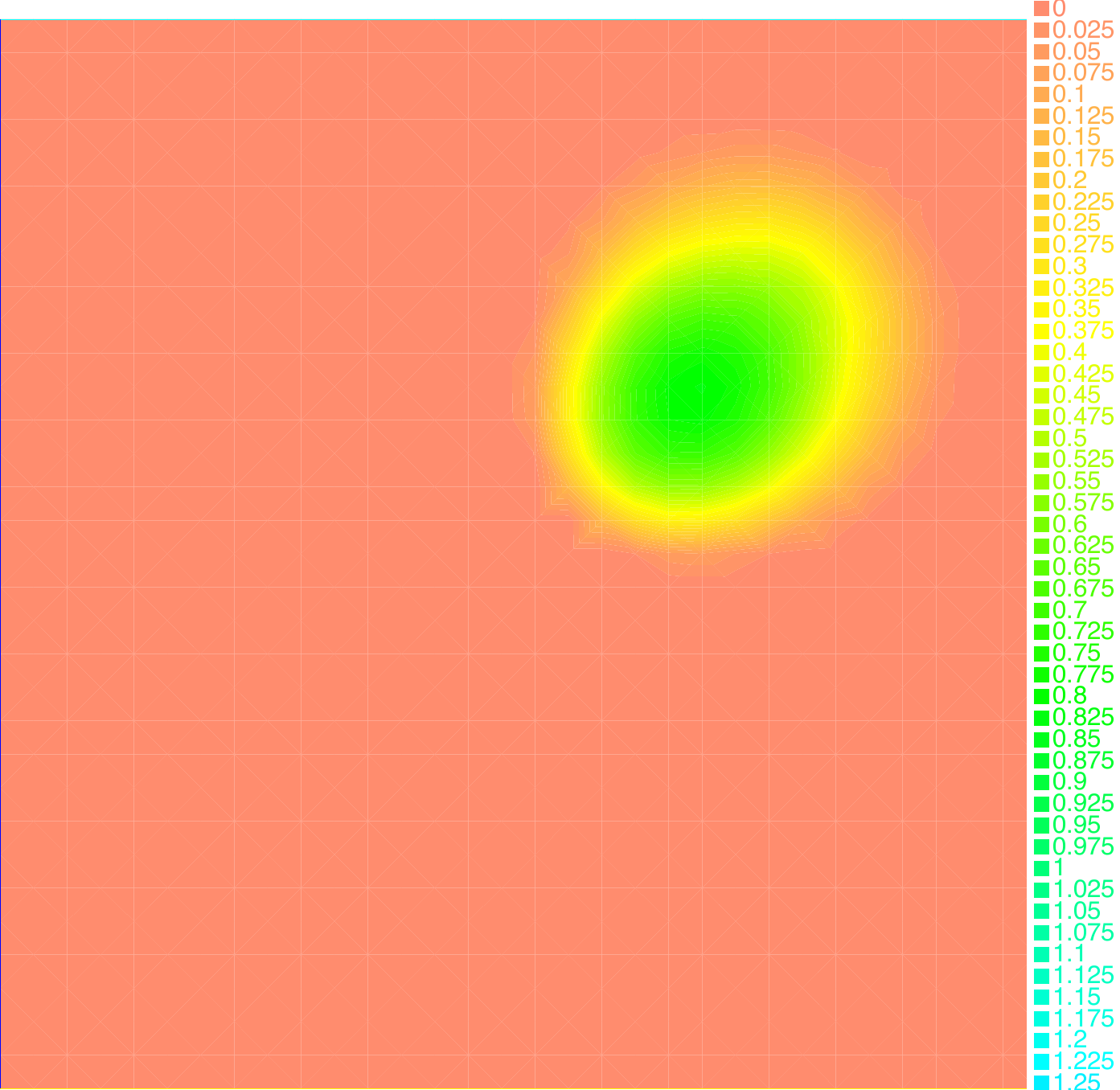}&
\includegraphics[ scale=0.15]{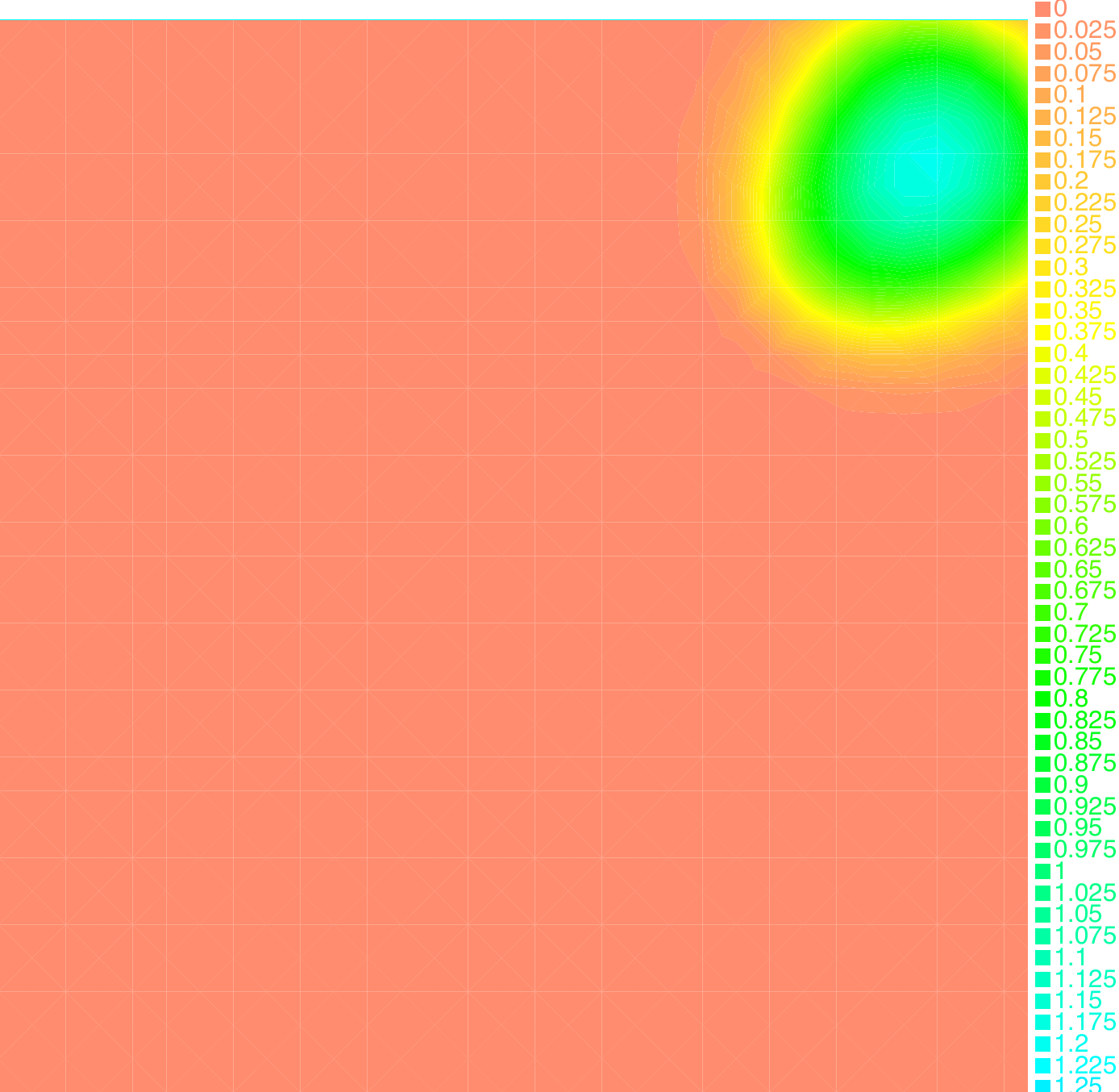}&
\includegraphics[ scale=0.15]{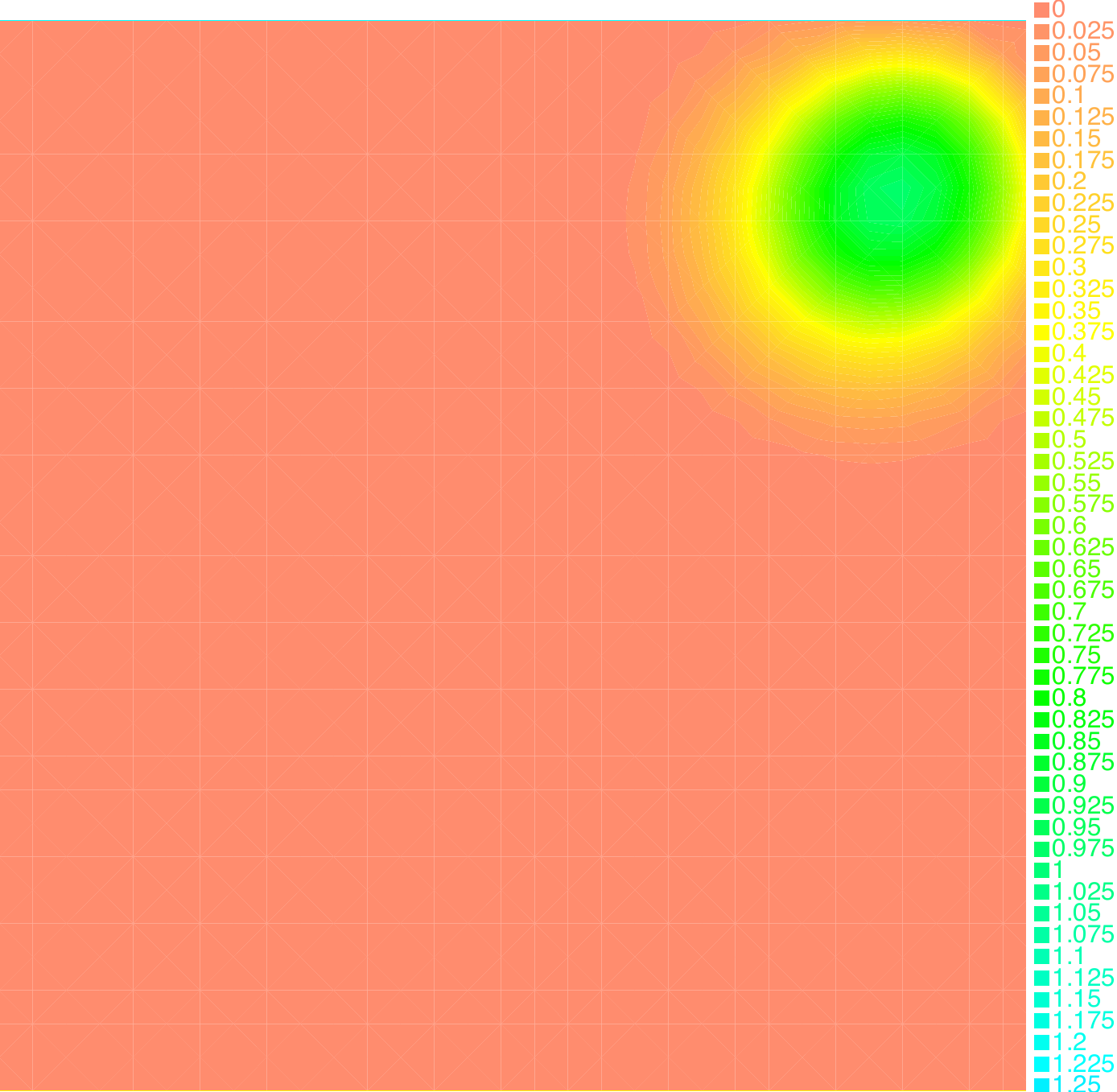}&
\includegraphics[ scale=0.15]{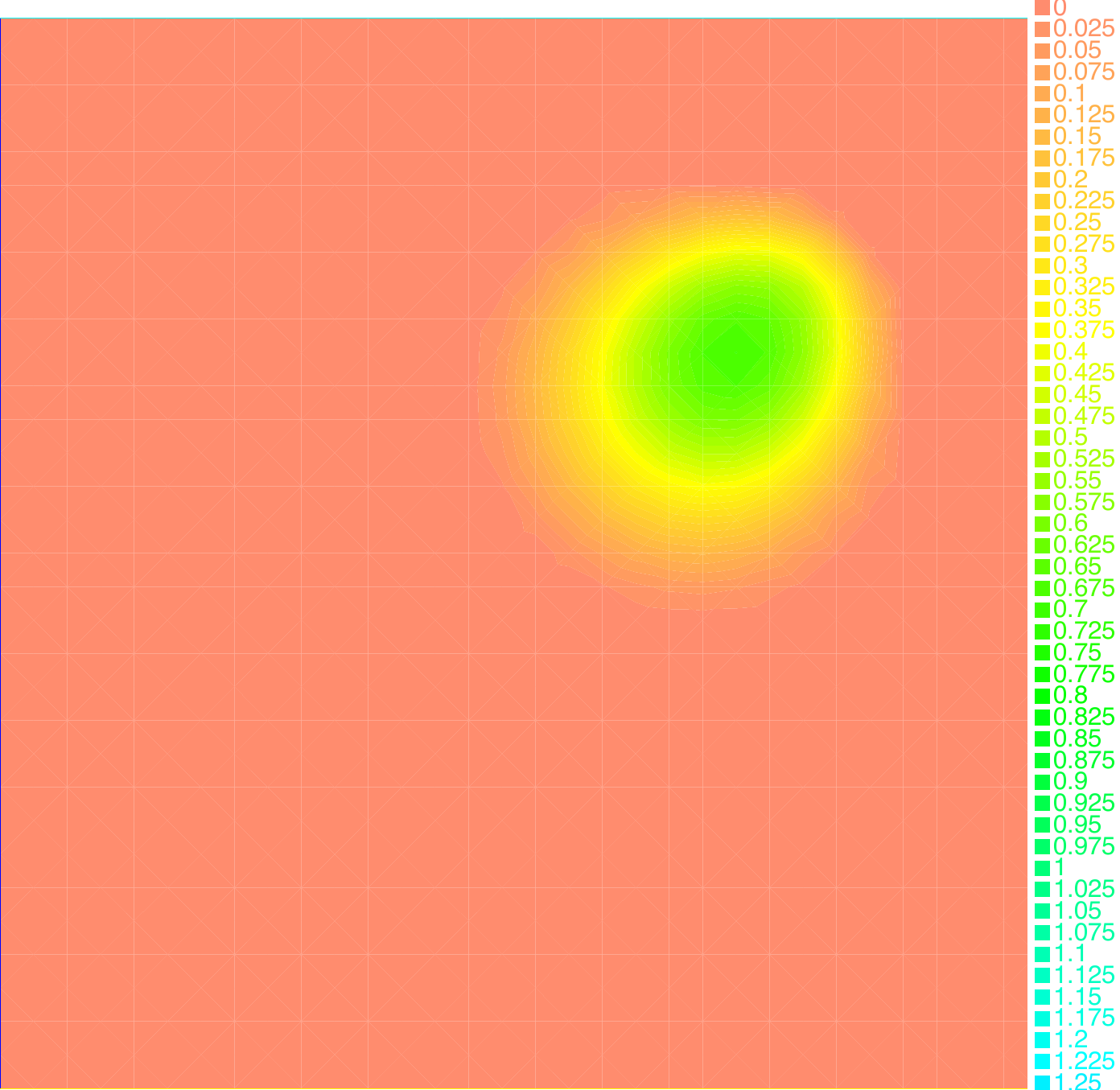}&
\includegraphics[ scale=0.15]{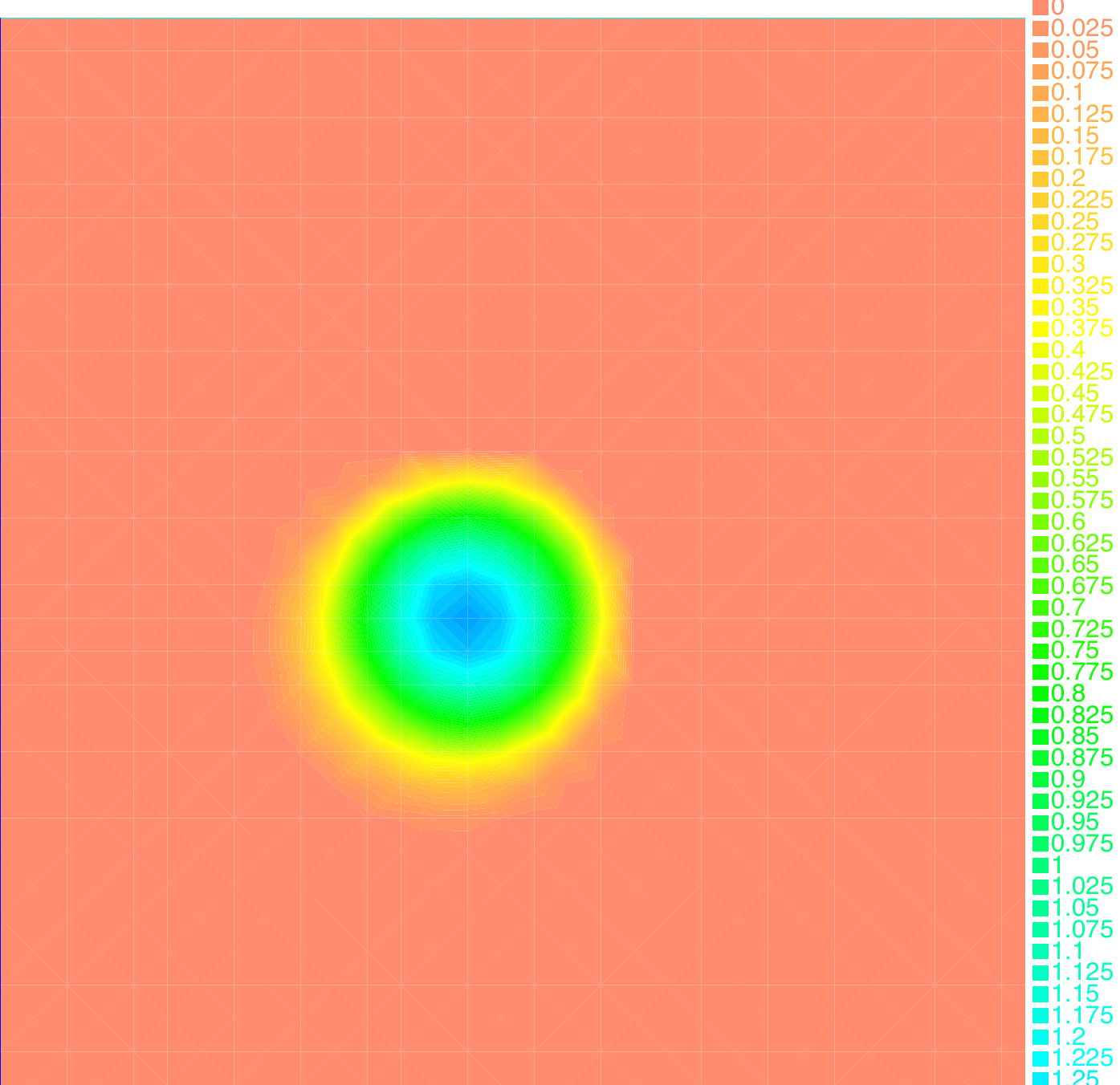}&
\includegraphics[ scale=0.15]{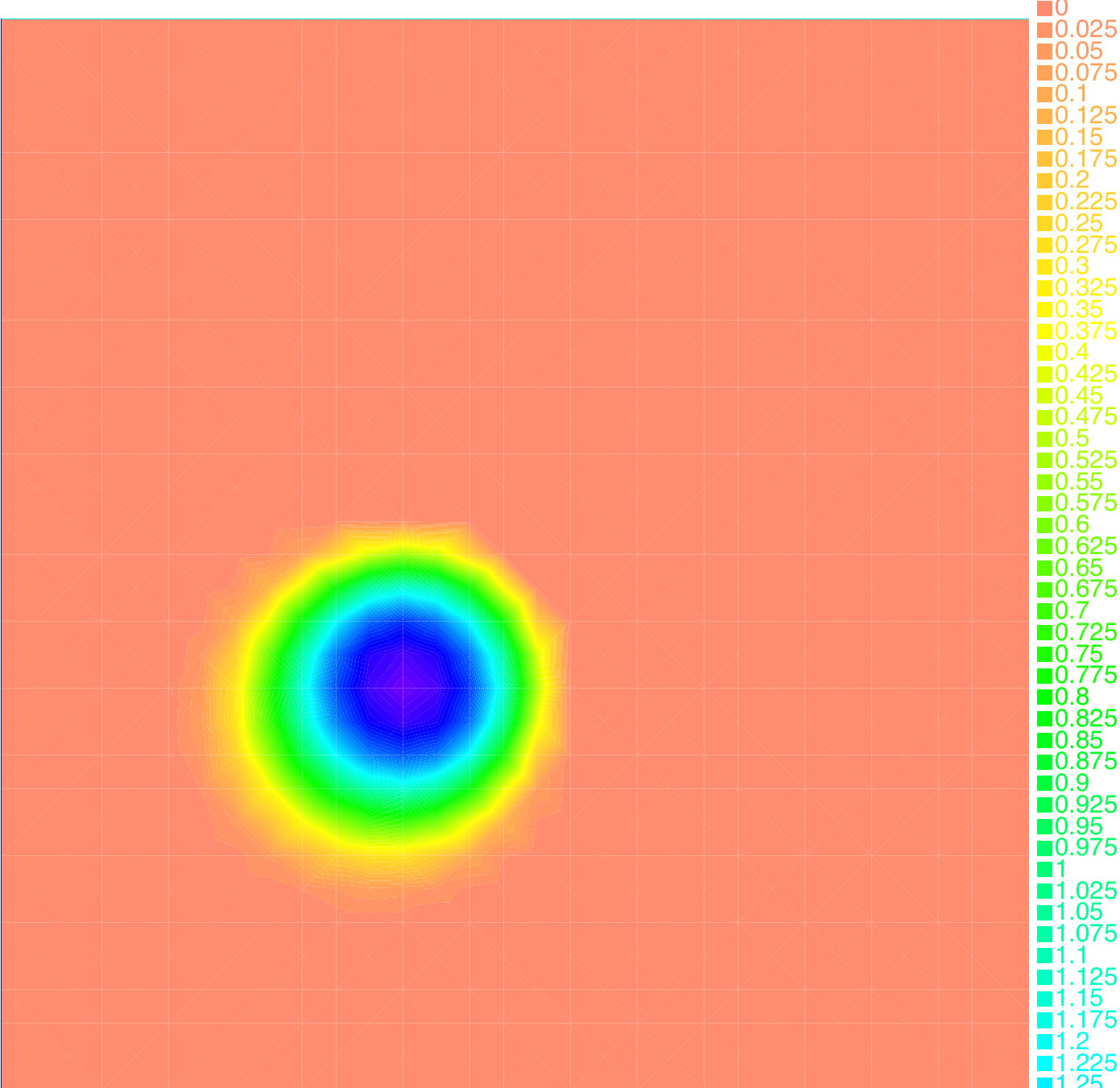}\\

\end{tabular}

\caption{\textit{Evolution of two species with prey-predator interactions. First row: display of $\rho_1+\rho_2$. Second row: display of the prey $\rho_1$. Third row: display of the predator $\rho_1$.}}
\label{alg2-figure prey-predator}
\end{figure}

\begin{figure}[h!]
\centering 
\includegraphics[ scale=0.35]{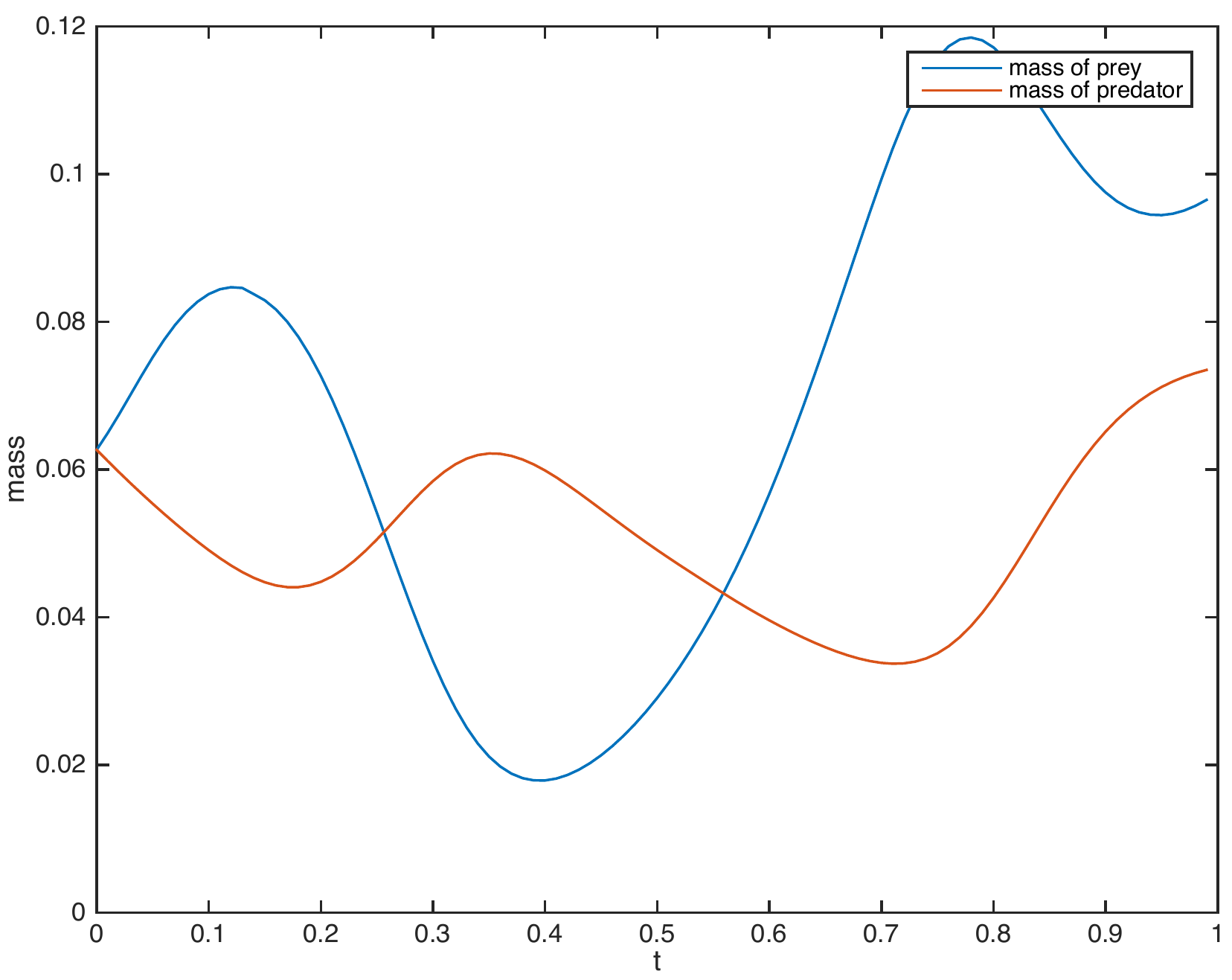}
\caption{\textit{Mass evolution for two-species prey-predator interactions.}}
\label{figure masse prey-predator}
\end{figure}

\section{Application to a tumor growth model with very degenerate enery}
\label{part4-section2HS}

In this section we take interest in the equation 
\begin{eqnarray}
\label{equation HS}
\left\{\begin{array}{l}
\partial_t \rho =\dive( \rho \nabla p)+ \rho (1-p),\\
p \geqslant 0\quad\mbox{and}\quad p(1-\rho)=0\\
0\leqslant \rho \leqslant 1,\\
\rho_{|t=0}=\rho_0.
\end{array} \right.
\end{eqnarray}
This equation is motivated by tumor growth models \cite{PQV,PTV} and exhibits a Hele-Shaw patch dynamics: if $\rho_0=\chi_{\Omega_0}$ then the solution remains an indicator $\rho(t)=\chi_{\Omega(t)}$ and the boundary moves with normal velocity $V=-\nabla p|_{\partial\Omega(t)}$, see \cite{alexander2014quasi} for a rigorous analysis in the framework of viscosity solutions.

At least formally, we remark that \eqref{equation HS} is the Wasserstein-Fisher-Rao gradient flow of the singular functional
$$
\F(\rho) :=\F_\infty(\rho) -\int_\Omega \rho,
$$
where
$$
\F_\infty(\rho):= \left\{\begin{array}{ll}
0 & \text{ if }   \rho \leqslant 1 \,\mbox{ a.e},\\
+\infty & \text{ otherwise.}
\end{array}\right.
$$
Indeed, the compatibility conditions $p\geqslant 0$ and $p(1-\rho)=0$ in \eqref{equation HS} really mean that the pressure $p$ belongs to the subdifferential $\partial \F_\infty(\rho)$, and \eqref{equation HS} thus reads as the gradient flow
$$
\partial_t\rho =\dive(\rho \nabla u)-\rho u,\qquad u=p-1\in -\partial \F(\rho).
$$
However, this functional is too singular for the previous splitting scheme to correctly capture the very degenerate diffusion.
Indeed, the naive and direct approach from section \ref{sec:KFRsplitting} would lead to
$$ \left\{\begin{array}{l}
\rho_h^{k+1/2} \in \argmin\limits_{\rho \leqslant1, \, |\rho|=|\rho_h^k|}\left\{  \frac{1}{2h} \W^2(\rho, \rho_{h}^k) -\int_\Omega \rho \right\},\\
\\
\rho_h^{k+1} \in \argmin\limits_{\rho \leqslant 1} \left\{ \frac{1}{2h} \FR^2(\rho, \rho_h^{k+1/2}) -\int_{\Omega} \rho \right\}.
\end{array}\right. $$
Since the Wasserstein step is mass-conservative by definition, the $\int\rho$ term has no effect in the first step and the latter reads as ``project $\rho_{h}^k$ on $\{\rho\leqslant 1\}$ w.r.t to the $\W$ distance''.
Since the output of the reaction step $\rho^{k+1}_h\leqslant 1$, the Wasserstein step will never actually project anything, and the diffusion is completly shut down.
As an example, it is easy to see that if the initial datum is an indicator $\rho_0=\chi_{\Omega_0}$ then the above naive scheme leads to a stationary solution $\rho^{k+1}_h=\rho^{k+1/2}_h=\rho_0$ for all $k\geqslant 0$, while the real solution should evolve according to the aforementioned Hele-Shaw dynamics $\rho(t)=\chi_{\Omega(t)}$ \cite{alexander2014quasi,PQV}.
One could otherwise try to write a semi-implicit scheme as follows: 1) keep the projection on $\{\rho\leqslant 1 \}$ in the first Wasserstein step.
As in \cite{MRCSV} a pressure term $p^{k+1/2}_h$ appears as a Lagrange multiplier in the Wasserstein projection.
2) in the $\FR$/reaction step, relax the constraint $\rho\leqslant 1$ and minimize instead $\rho^{k+1}\in\argmin \left\{\frac{1}{2h}\FR^2(\rho)+\int\rho p^{k+1/2}-\int\rho\right\}$, and keep iterating.
This seems to correctly capture the diffusion at least numerically speaking, but raises technical issues in the rigorous proof of convergence and most importantly destroys the variational structure at the discrete level (due to the fact that the reaction step becomes semi-explicit).

We shall use instead an approximation procedure, which preserves the variational structure at the discrete level: it is well-known that the Porous-Medium functional
$$
\F_m(\rho):=\left\{\begin{array}{ll}
\int_\Omega \frac{\rho^m}{m-1} & \text{ if } \rho^m \in L^1(\Omega)\\
+\infty& \text{ otherwise}
\end{array}\right. 
$$
$\Gamma$-converges to $\F_\infty$ as $m\to\infty$, see \cite{Braides}.
In the spirit of \cite{serfaty_sandier_gamma}, one should therefore expect that the gradient flow $\rho_m$ of $\F_m(\rho)-\int\rho$ converges to the gradient flow $\rho_\infty$ of the limiting functional $\F(\rho)=\F_\infty(\rho)-\int\rho$.
Implementing the splitting scheme for the regular energy functional $\F_m(\rho)-\int\rho$ gives a sequence $\rho_{h,m}$, and we shall prove below that $\rho_{h,m}$ converges to a solution of the limiting gradient flow as $m\to\infty$ and $h\to 0$.
However, it is known \cite{fleissner2016gamma} that the limit depends in general on the interplay between the time-step $h$ and the regularization parameter ($m\to\infty$ here), and for technical reasons we shall enforce the condition
$$
mh\to 0
\qquad \mbox{as }m\to\infty\mbox{ and }h\to 0.
$$
Note that \cite{PQV} already contained a similar approximation $m\to\infty$ but without exploiting the variational structure of the $m$- gradient flow, and our approach is thus different.
The above gradient-flow structure was already noticed and fully exploited in the ongoing work \cite{simone_lenaic}, where existence and uniqueness of weak solutions is proved and numerical simulations are performed needless of any splitting an using directly the $\KFR$ structure.
Here we rather emphasize the fact that the splitting does capture delicate $\Gamma$-convergence phenomena.\\

In order to make this rigorous, we fix a time step $h>0$ and construct two sequences $(\rho_{h,m}^{k+1/2})_k$ and $(\rho_{h,m}^{k})_k$, with $\rho_{h,m}^0=\rho_{0}$, defined recursively as
 \begin{eqnarray}
 \label{splitting scheme implicit}
\left\{\begin{array}{l}
\rho_h^{k+1/2} \in \argmin\limits_{ \rho\in \M^+,\,|\rho|=|\rho_h^k|} \left\{ \frac{1}{2h} \W^2(\rho, \rho_{h,m}^k) + \F_m(\rho) -\int_\Omega\rho\right\} ,\\
\\
\rho_h^{k+1} \in \argmin\limits_{\rho \in \M^+} \left\{ \frac{1}{2h} \FR^2(\rho, \rho_h^{k+1/2}) +\F_m(\rho) -\int_{\Omega} \rho \right\}.
\end{array}\right.
\end{eqnarray}

As is common in the classical theory of Porous Media Equations \cite{vazquez2007porous}, we define the pressure as the first variation
$$
p_m:=F_m'(\rho)=\frac{m}{m-1}\rho^{m-1}.
$$
We accordingly write
$$
p_{h,m}^{k+1/2}:=\frac{m}{m-1}(\rho_{h,m}^{k+1/2})^{m-1}
\qquad \text{and}\qquad 
p_{h,m}^{k+1}:=\frac{m}{m-1}(\rho_{h,m}^{k+1})^{m-1}
$$ 
for the discrete pressures.
As in section \ref{sec:KFRsplitting} we denote by $\rho_{h,m}(t), p_{h,m}(t)$ and $\trho_{h,m}(t),\tilde{p}_{h,m}(t)$ the piecewise constant interpolations of $\rho_{h,m}^{k+1}, p_{h,m}^{k+1}$ and $\rho_{h,m}^{k+1/2}, p_{h,m}^{k+1/2}$, respectively.

Our main result is
\begin{thm}
\label{existence HS implicit}
Assume that $\rho_0 \in  BV(\Omega)$, $\rho_0 \leqslant 1$, and $mh\to 0$ as $h\to 0$ and $m\to \infty$.
Then
for all $T>0$, $\rho_{h,m},\trho_{h,m}$ both converge to some $\rho$ strongly in $L^1((0,T)\times \Omega)$, the pressures $p_{h,m},\tilde{p}_{h,m}$ both converge to some $p$ weakly in $L^2((0,T) , H^1(\Omega))$, and $(\rho,p)$ is the unique weak solution of \eqref{equation HS}.
\end{thm}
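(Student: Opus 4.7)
The plan is to adapt the scheme-level analysis of Theorem \ref{theo:existence differente energies} to accommodate the joint limit $m\to\infty$, $h\to 0$ and to extract the Hele-Shaw complementarity structure $p\geq 0$, $p(1-\rho)=0$, $\rho\leq 1$ in the limit.

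First I would prove the pointwise bound $\rho_{h,m}\leq 1$ a.e.\ uniformly in $h,m$. Otto's maximum principle (Lemma \ref{lem:maximum_principle_W_step} with $V_1=0$) gives $\|\rho^{k+1/2}_{h,m}\|_\infty\leq\|\rho^k_{h,m}\|_\infty$. For the reaction step the optimality condition rewrites as a monotone fixed-point equation $\sqrt{\rho^{k+1}_{h,m}}\bigl(1+\tfrac{h}{2}(p_m(\rho^{k+1}_{h,m})-1)\bigr)=\sqrt{\rho^{k+1/2}_{h,m}}$; since $p_m(1)=\tfrac{m}{m-1}>1$, evaluating the LHS at $\rho^{k+1}_{h,m}=1$ gives a value strictly larger than $\sqrt{\rho^{k+1/2}_{h,m}}$ whenever $\rho^{k+1/2}_{h,m}\leq 1$, so by monotonicity the unique root satisfies $\rho^{k+1}_{h,m}\leq 1$. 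Inductively $\rho_{h,m},\trho_{h,m}\leq 1$ and the discrete pressures $p_{h,m}\leq \tfrac{m}{m-1}\leq 2$ uniformly.

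Next I would derive the energy dissipation and pressure estimates. The Wasserstein step dissipates $\F_m$; for the reaction step, the one-step bound $\rho^{k+1}_{h,m}\leq(1+Ch)\rho^{k+1/2}_{h,m}$ from Lemma \ref{lem encadrement} (with $V_2=-1$) yields $\F_m(\rho^{k+1}_{h,m})-\F_m(\rho^{k+1/2}_{h,m})\leq[(1+Ch)^m-1]\F_m(\rho^{k+1/2}_{h,m})$. This is where the hypothesis $mh\to 0$ becomes crucial: it linearizes $(1+Ch)^m-1\lesssim Cmh$ and, combined with the trivial bound $\F_m(\rho^{k+1/2}_{h,m})\leq\tfrac{|\Omega|}{m-1}$ from the previous paragraph, controls the cumulative $\F_m$-increase along all reaction steps by $CmhN\cdot\tfrac{|\Omega|}{m-1}\lesssim C_T$ uniformly in $m,h$. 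This restores a telescoping estimate $\frac{1}{h}\sum_k\W^2(\rho^{k+1/2}_{h,m},\rho^k_{h,m})\leq C_T$ exactly as in Section \ref{sec:KFRsplitting}, and the Wasserstein Euler-Lagrange \eqref{opt Wasserstein} then yields $\rho^m_{h,m}$ bounded in $L^2([0,T];H^1(\Omega))$, from which the $L^2(H^1)$ estimate on $p_{h,m}$ claimed in the theorem is recovered via the uniform $L^\infty$ bound. Similarly $\frac{1}{h}\sum\FR^2(\rho^{k+1}_{h,m},\rho^{k+1/2}_{h,m})\leq C_T$, and Proposition \ref{prop:comparison_d_W_H} combined with the argument of Proposition \ref{prop:1/2_holder} produces the approximate $\tfrac12$-H\"older $\KFR$-equicontinuity in time. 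Combining $\KFR$-time-equicontinuity with the spatial regularity of $\rho^m_{h,m}$ via a refined Aubin-Lions result \`a la \cite{RS} (and using $\rho_0\in BV$ to propagate the initial regularity) then delivers strong $L^1((0,T)\times\Omega)$ convergence $\rho_{h,m},\trho_{h,m}\to\rho$ and weak $L^2([0,T];H^1(\Omega))$ convergence $p_{h,m}\rightharpoonup p$, up to subsequence.

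Passing to the limit in \eqref{opt Wasserstein}\eqref{opt Fisher-Rao} exactly as in Theorem \ref{theo:existence differente energies} delivers $\partial_t\rho=\dive(\rho\nabla p)+\rho(1-p)$ in the weak sense, with $\rho\leq 1$ from the strong $L^1$ limit and $p\geq 0$ from $p_{h,m}\geq 0$. The crucial remaining point is the complementarity $p(1-\rho)=0$, which I would extract from
\[
\int_0^T\!\!\int_\Omega p_{h,m}(1-\rho_{h,m})\,\rd x\,\rd t=\tfrac{m}{m-1}\int_0^T\!\!\int_\Omega\rho_{h,m}^{m-1}(1-\rho_{h,m})\,\rd x\,\rd t\longrightarrow 0,
\]
since pointwise $\rho_{h,m}^{m-1}(1-\rho_{h,m})\to 0$ (if $\rho(t,x)<1$ then $\rho_{h,m}^{m-1}\to 0$; if $\rho(t,x)=1$ then $1-\rho_{h,m}\to 0$) with the integrand dominated by $1$, so dominated convergence gives $\int_0^T\!\!\int_\Omega p(1-\rho)=0$, whence $p(1-\rho)=0$ a.e.\ by nonnegativity of all factors. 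Uniqueness of the weak solution to \eqref{equation HS}, established in \cite{PQV,alexander2014quasi}, finally upgrades subsequential convergence to convergence of the full sequence. The main obstacle I anticipate is exactly this uniform control of the $\F_m$-increase along the reaction step as $m\to\infty$: this is precisely the place where the compatibility hypothesis $mh\to 0$ is unavoidable, in the spirit of the $\Gamma$-convergence subtleties analysed in \cite{fleissner2016gamma}.
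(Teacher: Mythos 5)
Your proposal misidentifies where the hypothesis $mh\to 0$ is needed, and two of your key estimates do not hold as stated. Let me point out the three main issues.

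\emph{Energy estimate and the role of $mh\to 0$.} In the scheme~\eqref{splitting scheme implicit} both the Wasserstein and Fisher--Rao steps minimize the \emph{same} functional $\F_m(\rho)-\int\rho$. Consequently the telescoping is automatic: testing $\rho=\rho^{k+1/2}$ in the Fisher--Rao step gives directly $\F_m(\rho^{k+1})-\int\rho^{k+1}\leq \F_m(\rho^{k+1/2})-\int\rho^{k+1/2}$, hence the increase of $\F_m$ in the reaction step is controlled by $\int(\rho^{k+1}-\rho^{k+1/2})\leq Ch\int\rho^{k+1/2}$ with no $m$-dependence. Your detour through $(1+Ch)^m-1\lesssim Cmh$ is unnecessary and introduces an $m$-dependence that is not actually there. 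The hypothesis $mh\to 0$ is used for a completely different purpose in the paper: to show that the two piecewise-constant pressure interpolations $p_{h,m}$ and $\tilde p_{h,m}$ converge to the \emph{same} limit, via $\int|p_{h,m}-\tilde p_{h,m}|\leq m\int|\rho_{h,m}-\trho_{h,m}|\leq C_T hm\to 0$ (using that $z\mapsto z^{m-1}$ is $(m-1)$-Lipschitz on $[0,1]$). This identification is essential: the Euler--Lagrange equation for the Wasserstein step involves $\nabla\tilde p_{h,m}$, while the Fisher--Rao step involves $p_{h,m}$, so without $p=\tilde p$ the limit equation would contain two different pressures. Your sketch glosses over this entirely.

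\emph{The $L^2(H^1)$ bound on $p_{h,m}$.} You claim the Wasserstein Euler--Lagrange equation plus the $L^\infty$ bound gives $\rho^m_{h,m}\in L^2(H^1)$, and that this yields the bound on $p_{h,m}$. The first part is correct (note $P_1(\rho)=\rho F_m'(\rho)-F_m(\rho)=\rho^m$), but the second does not follow: since $p_m=\tfrac{m}{m-1}\rho^{m-1}$, one has $\nabla p_m=\nabla\rho^m/\rho$, which is \emph{not} controlled by $\nabla\rho^m$ when $\rho$ is small, regardless of the upper bound $\rho\leq 1$. The paper instead obtains $\|\nabla\tilde p_{h,m}\|_{L^2(Q_T)}\leq C_T$ by a flow-interchange argument against the auxiliary energy $\G(\rho)=\int\tfrac{\rho^{m-1}}{m-2}+\eps\int\rho\log\rho$.

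\emph{Spatial compactness.} Knowing $\rho^m_{h,m}$ bounded in $L^2(H^1)$ does \emph{not} give spatial compactness on $\rho_{h,m}$ as $m\to\infty$: as $m$ grows, $\|\rho^m\|_{H^1}$ loses all coercivity on $\rho$ wherever $\rho<1$, so the Rossi--Savar\'e criterion cannot be applied in that form. The paper instead proves a uniform $BV$ bound $\sup_{[0,T]}\|\rho_{h,m}\|_{BV}\leq e^T\|\rho_0\|_{BV}$ (Lemma~\ref{prop:BV_estimate}, combining monotonicity of the $BV$-norm under the Wasserstein step with a Lipschitz-composition argument for the Fisher--Rao step), which supplies the necessary compactness. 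You mention $\rho_0\in BV$ only in passing; in fact this is the linchpin of the spatial compactness. The remaining steps in your sketch (uniform $L^\infty$ bound, complementarity $p(1-\rho)=0$ via weak-strong convergence) are essentially in line with the paper.
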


Since we have a $\KFR$ gradient-flow structure, uniqueness should formally follows from the $-1$ geodesic convexity of the driving functional $\E_\infty(\rho)-\int_\Omega\rho$ with respect to the $\KFR$ distance \cite{liero2013gradient,LMS_small_15} and the resulting contractivity estimate $\KFR(\rho^1(t),\rho^2(t))\leq e^t\KFR(\rho^1_0,\rho^2_0)$.
This is proved rigorously in \cite{simone_lenaic}, and therefore we retrieve convergence of the whole sequence $\rho_{h,m}\to \rho$ in Theorem~\ref{existence HS implicit} (and not only for subsequences).
Given this uniqueness, it is clearly enough to prove convergence along any discrete (sub)sequence, and this is exactly what we show below.

The strategy of proof for Theorem~\ref{existence HS implicit} is exactly as in section \ref{sec:KFRsplitting}, except that we need now the estimates to be uniform in both in $h\to 0$ and $m\to\infty$.
\subsection{Estimates and convergences}

In this section, we improve the previous estimates from section \ref{sec:KFRsplitting}.
We start with an explicit $L^\infty$-bound:

\begin{lem}
\label{lem:borne sup inf 1}
Assume that $\rho_0 \leqslant 1$, then for all $t \in \R^+$,
$$ \| \rho_{h,m}(t,\cdot) \|_\infty,\| \trho_{h,m}(t,\cdot) \|_\infty \leqslant 1 .$$
\end{lem}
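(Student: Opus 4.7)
The plan is to proceed by induction on $k$, showing that both substeps of \eqref{splitting scheme implicit} preserve the pointwise bound $\rho\leq 1$; since $\rho_{h,m}$ and $\tilde{\rho}_{h,m}$ are piecewise-constant interpolants of the sequences $(\rho^{k+1}_{h,m})$ and $(\rho^{k+1/2}_{h,m})$, the conclusion then follows at once. The base case is the assumption $\rho_0\leq 1$.

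For the Wasserstein substep, the key observation is that the linear term $-\int\rho$ is constant on the admissible set $\{\rho:|\rho|=|\rho^k_{h,m}|\}$, so $\rho^{k+1/2}_{h,m}$ is in fact the minimizer of a pure Porous-Media JKO problem for the functional $\F_m$. Otto's maximum principle (the case $V_1\equiv 0$ recalled in the remark following Lemma~\ref{lem:maximum_principle_W_step}, equivalently \cite[Lemma~2]{PT} or \cite{O1}) then yields $\|\rho^{k+1/2}_{h,m}\|_{L^\infty}\leq \|\rho^k_{h,m}\|_{L^\infty}\leq 1$.

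For the Fisher-Rao substep, the analogue of the optimality condition \eqref{opt Fisher-Rao} with $F_2=F_m$ and constant potential $-1$ reads, on the support of $\rho^{k+1}_{h,m}$,
$$
\bigl(\sqrt{\rho^{k+1}_{h,m}}-\sqrt{\rho^{k+1/2}_{h,m}}\bigr)\sqrt{\rho^{k+1}_{h,m}}=-\tfrac{h}{2}\,\rho^{k+1}_{h,m}\bigl(F_m'(\rho^{k+1}_{h,m})-1\bigr),
$$
which rewrites as $\Phi(\rho^{k+1}_{h,m})=\sqrt{\rho^{k+1/2}_{h,m}}$ with
$$
\Phi(z):=\sqrt{z}\Bigl[1+\tfrac{h}{2}\bigl(F_m'(z)-1\bigr)\Bigr],\qquad F_m'(z)=\tfrac{m}{m-1}z^{m-1}.
$$
For every $z\geq 1$ one has $F_m'(z)-1\geq \tfrac{1}{m-1}$, hence $\Phi(z)\geq 1+\tfrac{h}{2(m-1)}>1$. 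Combined with the inductive bound $\sqrt{\rho^{k+1/2}_{h,m}}\leq 1$, the equation $\Phi(\rho^{k+1}_{h,m})=\sqrt{\rho^{k+1/2}_{h,m}}$ thus forces $\rho^{k+1}_{h,m}<1$ on its support; outside the support the bound $\rho^{k+1}_{h,m}=0\leq 1$ is trivial.

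I expect no serious obstacle here: both substeps reduce to essentially one-line estimates once one notes that the mass constraint trivializes the linear potential term in the Wasserstein problem and that the Fisher-Rao step boils down to monotonicity of the scalar map $\Phi$ near $z=1$. The threshold value $1$ is sharp and arises precisely because $F_m'(1)=\tfrac{m}{m-1}>1$ makes the reaction $z\mapsto z(1-F_m'(z))$ strictly negative for $z\geq 1$, in line with the limiting hard constraint $\rho\leq 1$ in \eqref{equation HS}.
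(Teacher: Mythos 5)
Your proof is correct and takes essentially the same route as the paper: induction on $k$, with Otto's maximum principle handling the Wasserstein substep and the Fisher--Rao optimality condition forcing $\rho^{k+1}_{h,m}<1$ because $F_m'(z)-1\geq\tfrac{1}{m-1}>0$ once $z\geq 1$. The paper phrases the Fisher--Rao step as a contradiction on the set $\{\rho^{k+1}_{h,m}>1\}$ rather than via the monotone scalar map $\Phi$, but the two arguments are the same estimate in different clothing; your explicit remark that the linear term $-\int\rho$ is constant on the mass constraint set (so the $\W$ step is a pure $\F_m$ JKO step) is a small but useful clarification that the paper leaves implicit.
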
 

\begin{proof}
We argue by induction at the discrete level, starting from $\rho_0=\rho^0_{h,m}\leqslant 1$ by assumption.
If $\| \rho_{h,m}^k \|_\infty \leqslant 1$, Otto's maximum principle \cite{O} implies that $\| \rho_{h,m}^{k+1/2} \|_\infty \leqslant\| \rho_{h,m}^{k} \|_\infty\leqslant 1$ in the Wasserstein step.

Assume now by contradiction that $E:=\{\rho_{h,m}^{k+1} >1\}$ has positive Lebesgue measure.
The optimality condition \eqref{opt Fisher-Rao} for the Fisher-Rao minimization step gives, dividing by $\sqrt{\rho_{h,m}^{k+1}}>0$ in $E$,
$$
\sqrt{\rho_{h,m}^{k+1}}- \sqrt{\rho_{h,m}^{k+1/2}}= \frac{h}{2}\sqrt{\rho_{h,m}^{k+1}} \left(1- \frac{m}{m-1} (\rho_{h,m}^{k+1})^{m-1} \right)
$$
Then $1- \frac{m}{m-1} (\rho_{h,m}^{k+1})^{m-1} \leqslant 1-\frac{m}{m-1}< 0$ in the right-hand side, hence the desired contradiction $\rho_{h,m}^{k+1} < \rho_{h,m}^{k+1/2} \leqslant 1$.
\end{proof}
%
Noticing that the functional $\frac{1}{m-1}\int \rho^m-\int\rho$ corresponds to taking explicitly $F_2(z)=z^{m}/m-1$ and $V_2(x)\equiv-1$ in section~\ref{sec:KFRsplitting}, it is easy to reproduce the computations from the proof of Lemma~\ref{lem encadrement} and carefully track the dependence of the constants w.r.t $m>1$ to obtain
\begin{lem}
There exists $c>0$ such that, for all $m>m_0$ large enough and all $h\leq h_0$ small enough,
 \begin{equation}
 \label{eq:encadrement_FR_HS}
 (1-ch)\rho^{k+1/2}_{h,m}(x)\leqslant \rho^{k+1}_{h,m}(x)\leqslant (1+h)\rho^{k+1/2}_{h,m}(x)\qquad 
 \mbox{a.e.}
 \end{equation}
\end{lem}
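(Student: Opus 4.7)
The plan is to specialize the computation from the proof of Lemma~\ref{lem encadrement} to the concrete choice $F_2(z) = z^m/(m-1)$, $V_2 \equiv -1$, paying close attention to the $m$-dependence of the implicit constants. The crucial new ingredient compared to that earlier proof is the pointwise bound $\rho^{k+1}_{h,m} \leq 1$ from Lemma~\ref{lem:borne sup inf 1}, which is exactly what keeps everything uniform in $m$.

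First I would work on the common support $\supp \rho^{k+1}_{h,m} = \supp \rho^{k+1/2}_{h,m}$ provided by \eqref{eq:supports}: dividing the optimality condition \eqref{opt Fisher-Rao} by $\sqrt{\rho^{k+1}_{h,m}}>0$ gives
$$
\sqrt{\rho^{k+1}_{h,m}} - \sqrt{\rho^{k+1/2}_{h,m}} \;=\; \tfrac{h}{2}\sqrt{\rho^{k+1}_{h,m}}\Big(1 - \tfrac{m}{m-1}(\rho^{k+1}_{h,m})^{m-1}\Big).
$$
Since $0\leq \rho^{k+1}_{h,m}\leq 1$, the bracket lies in $[-1/(m-1),\,1]$ uniformly. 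Off this common support both $\rho^{k+1}_{h,m}$ and $\rho^{k+1/2}_{h,m}$ vanish and the desired double inequality is trivial, so it suffices to argue on the support.

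For the upper bound, using bracket $\leq 1$ exactly as in the derivation of \eqref{encadrement FR_sup} yields $\sqrt{\rho^{k+1}_{h,m}}(1-h/2)\leq \sqrt{\rho^{k+1/2}_{h,m}}$; squaring and Taylor-expanding $(1-h/2)^{-2}$ absorbs the remainder into a clean bound of the form $\rho^{k+1}_{h,m}\leq (1+h)\rho^{k+1/2}_{h,m}$ for $h\leq h_0$ small enough. For the lower bound, the bracket $\geq -1/(m-1)$ gives $\sqrt{\rho^{k+1}_{h,m}}\big(1+h/(2(m-1))\big)\geq \sqrt{\rho^{k+1/2}_{h,m}}$, and squaring produces the lower estimate $\rho^{k+1}_{h,m}\geq (1-ch)\rho^{k+1/2}_{h,m}$ where in fact $c=O(1/(m-1))$ — a fortiori uniform in $m\geq m_0$.

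The only delicate aspect is that \emph{all} implicit constants must stay uniform in $m$; this is precisely where Lemma~\ref{lem:borne sup inf 1} plays the central role, since without the $L^\infty$ cap on $\rho^{k+1}_{h,m}$ the term $F_2'(\rho^{k+1}_{h,m}) = \tfrac{m}{m-1}(\rho^{k+1}_{h,m})^{m-1}$ could blow up with $m$ and the bounds from the generic Lemma~\ref{lem encadrement} would degenerate. With the cap in place, the only nontrivial choice is of $m_0$ and $h_0$ large/small enough to ensure $h/(m-1)\ll 1$, after which both squarings are licit and no serious obstacle remains.
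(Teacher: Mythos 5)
Your proposal matches the paper's approach exactly: the paper itself omits the proof, saying only that one should ``reproduce the computations from the proof of Lemma~\ref{lem encadrement} and carefully track the dependence of the constants w.r.t $m>1$,'' and points to the universal bound $\rho^{k+1}_{h,m}\leqslant 1$ from Lemma~\ref{lem:borne sup inf 1} as the reason the constants are now uniform in $m$. That is precisely what you do, and your use of the cap $\rho^{k+1}_{h,m}\leqslant 1$ directly (so that the bracket sits in $[-1/(m-1),1]$) is if anything cleaner than literally re-running Lemma~\ref{lem encadrement}, which routes the lower bound through the upper bound and an $F_2'(2M)$ estimate that would not be $m$-uniform without the cap.

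One small arithmetic slip in the upper bound: from $\sqrt{\rho^{k+1}_{h,m}}\,(1-h/2)\leqslant \sqrt{\rho^{k+1/2}_{h,m}}$ you get $\rho^{k+1}_{h,m}\leqslant (1-h/2)^{-2}\rho^{k+1/2}_{h,m}$, and $(1-h/2)^{-2}=1+h+\tfrac34 h^2+\mathcal O(h^3)>1+h$ for every $h>0$, so the remainder cannot be ``absorbed'' into a clean $(1+h)$ --- you can only conclude $(1+Ch)$ for some constant $C>1$ (e.g.\ $1+2h$ for $h\leqslant 1$). This is harmless for everything downstream (the $L^1$ growth estimate $\|\rho^{k+1}_{h,m}\|_{L^1}\leqslant e^{CT}\|\rho_0\|_{L^1}$, the support identity, the telescoping), and arguably the paper's stated constant $(1+h)$ is simply loose, but as phrased your absorption claim is not correct.
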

Note that this holds regardless of any compatibility such as $hm\to 0$.
The key point is here that the lower bound $c$ previously depended on an upper bound $M$ on $\rho^{k+1/2}$ in Lemma~\ref{lem encadrement}, but since we just obtained in Lemma~\ref{lem:borne sup inf 1} the universal upper bound $\rho^{k+1/2}\leqslant 1$ we end up with a lower bound which is also uniform in $h,m$.
The proof is identical to that of Lemma~\ref{lem encadrement} and we omit the details for simplicity.

Recalling that the Wasserstein step is mass-preserving, we obtain by immediate induction and for all $0\leq t\leq T$
$$
\|\rho_{h,m}(t)\|_{L^1},\,\|\trho_{h,m}(t)\|_{L^1}\leqslant e^T\|\rho_0\|_{L^1}
$$
as well as
\begin{equation}
\label{eq:comparison_p_ptilda_HS}
\|\rho_{h,m}(t)-\trho_{h,m}(t)\|_{L^1}\leqslant C_Th.
\end{equation}

Testing successively $\rho=\rho_{h,m}^k$ and $\rho = \rho_{h,m}^{k+1/2}$ in \eqref{splitting scheme implicit}, we get
\begin{equation*}
\frac{1}{2h}\left(\W^2(\rho_{h,m}^k,\rho_{h,m}^{k+1/2}) +\FR^2( \rho_{h,m}^{k+1/2},\rho_{h,m}^{k+1}) \right)  \leqslant  \F_m(\rho_{h,m}^k) -\F_m(\rho_{h,m}^{k+1}) +\int_{\Omega}(\rho_{h,m}^{k+1/2}-\rho_{h,m}^{k+1}).
\end{equation*}
Using Proposition~\ref{prop:comparison_d_W_H} to control $\KFR^2\lesssim 2(\W^2+\FR^2)$ and the lower bound in \eqref{eq:encadrement_FR_HS} yields
\begin{eqnarray*}
\frac{1}{4h}\KFR^2(\rho^{k+1}_{h,m},\rho^k_{h,m})
&\leqslant & \frac{1}{2h}\left(\W^2(\rho_{h,m}^k,\rho_{h,m}^{k+1/2}) +\FR^2( \rho_{h,m}^{k+1/2},\rho_{h,m}^{k+1}) \right)\\
&\leqslant & \F_m(\rho_{h,m}^k) -\F_m(\rho_{h,m}^{k+1}) +\int_{\Omega}(\rho_{h,m}^{k+1/2}-\rho_{h,m}^{k+1})\\
&\leqslant & \F_m(\rho_{h,m}^k) -\F_m(\rho_{h,m}^{k+1}) +c h\int_{\Omega}\rho_{h,m}^{k+1/2}\\
&\leqslant  &\F_m(\rho_{h,m}^k) -\F_m(\rho_{h,m}^{k+1}) +c he^T
\end{eqnarray*}
for all $k\leqslant N:= \lfloor T/h \rfloor$.

Summing over $k$ we get
\begin{eqnarray*}
\frac{1}{4h}\sum_{k=0}^{N-1} \KFR^2(\rho_{h,m}^k, \rho_{h,m}^{k+1}) &\leqslant & \F_m(\rho_0) - \F_m(\rho_{h,m}^N) +C_T\\
&\leqslant &\frac{1}{m-1}\int_\Omega\rho_0^m + C_T \leqslant \frac{1}{m-1}\int_\Omega\rho_0 + C_T\leqslant C_T,
\end{eqnarray*}

where we used successively $F_m\geq 0$ to get rid of $\F_m(\rho^N_{h,m})$, and $\rho_0^m\leq \rho_0$ for $\rho_0\leq 1$ and $m>1$. 

Consequently, for all fixed $T>0$ and any $t,s\in[0,T]$ we obtain the classical $\frac{1}{2}$-H\"older estimate
\begin{eqnarray}
\label{part42-holder estimates}
\left\{ \begin{array}{l}
\KFR(\rho_{h,m}(t),\rho_{h,m}(s)) \leqslant C_T|t-s+h|^{1/2},\\
\KFR(\trho_{h,m}(t),\trho_{h,m}(s)) \leqslant C_T|t-s+h|^{1/2}.
\end{array}\right.
\end{eqnarray} 
\\

Exploiting the explicit algebraic structure of $F_m(z)=\frac{1}{m-1}z^m$, compactness in space will be given here by
\begin{lem}
\label{prop:BV_estimate}
If $\rho_0 \in BV(\Omega)$ then
$$
\sup_{t \in [0,T]} \left\{\| \rho_{h,m}(t,\cdot) \|_{BV(\Omega)}, \| \trho_{h,m}(t,\cdot) \|_{BV(\Omega)} \right\} \leqslant e^T \| \rho_0 \|_{BV(\Omega)}.
$$
\end{lem}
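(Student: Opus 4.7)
The plan is to prove by induction on $k$ that the total-variation seminorm $TV(\rho) := \int_\Omega |D\rho|$ is non-increasing along each Wasserstein step in \eqref{splitting scheme implicit}, while growing by a factor of at most $(1-h/2)^{-2}$ along each Fisher-Rao step. Since $(1-h/2)^{-2N} \to e^T$ as $h\to 0$ with $N = \lfloor T/h\rfloor$, iterating these two per-step estimates and combining with the $L^1$-estimate $\|\rho_{h,m}(t)\|_{L^1} \leq e^T\|\rho_0\|_{L^1}$ obtained just above the statement will yield the claimed $BV$-bound (up to an $O(h)$ correction in the exponential constant that vanishes as $h \to 0$). The same bound for $\trho_{h,m}$ follows by stopping the iteration one half-step earlier.

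For the Fisher-Rao step, dividing the pointwise optimality condition \eqref{opt Fisher-Rao} by $\sqrt{\rho^{k+1}_{h,m}}$ on its support (which coincides with $\supp\,\rho^{k+1/2}_{h,m}$ by \eqref{eq:supports}) and inserting $F_2 = F_m$, $V_2 \equiv -1$, one obtains the implicit algebraic identity
\[
\sqrt{\rho^{k+1/2}_{h,m}(x)} \;=\; G\bigl(\sqrt{\rho^{k+1}_{h,m}(x)}\bigr),
\qquad
G(u) := u\left(1 - \tfrac{h}{2} + \tfrac{hm}{2(m-1)}\,u^{2(m-1)}\right).
\]
By Lemma~\ref{lem:borne sup inf 1} we have $\sqrt{\rho^{k+1}_{h,m}} \in [0,1]$, and for $h$ small $G'(u) \geq 1 - h/2 > 0$ on $[0,1]$, so $G$ is a smooth strictly increasing bijection of $[0,1]$ onto its image. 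Inverting yields the pointwise representation $\rho^{k+1}_{h,m} = \Phi(\rho^{k+1/2}_{h,m})$ with $\Phi(y) := (G^{-1}(\sqrt{y}))^2$ satisfying $\Phi(0) = 0$ and
\[
\Phi'(y) \;=\; \frac{G^{-1}(\sqrt{y})}{\sqrt{y}\;G'\bigl(G^{-1}(\sqrt{y})\bigr)} \;\leq\; \frac{1}{(1-h/2)^{2}}.
\]
The standard BV chain rule for monotone Lipschitz maps then gives $TV(\rho^{k+1}_{h,m}) \leq (1-h/2)^{-2}\,TV(\rho^{k+1/2}_{h,m})$.

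For the Wasserstein step, we must establish the sharper statement $TV(\rho^{k+1/2}_{h,m}) \leq TV(\rho^k_{h,m})$, which amounts to showing that one JKO step of the mass-constrained Porous-Media functional $\F_m$ does not increase the total variation (the linear term $-\int\rho$ plays no role by mass conservation). The natural approach is a flow interchange \`a la Matthes-McCann-Savar\'e: letting $(S_s)_{s\geq 0}$ denote the Neumann porous-medium semigroup on $\Omega$ of exponent $m$, which is well-known to be a $BV$-contraction (see e.g.\ V\'azquez's monograph \cite{vazquez2007porous}), we use $S_s\,\rho^{k+1/2}_{h,m}$ as a competitor in the JKO variational problem defining $\rho^{k+1/2}_{h,m}$ and differentiate the resulting optimality inequality at $s = 0^+$; together with the dissipation $\tfrac{d}{ds}\F_m(S_s\rho)\big|_{s=0^+} \leq 0$ this yields the desired monotonicity. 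This flow-interchange argument for the Wasserstein step is the main technical obstacle, requiring careful handling of the boundary terms in the bounded Neumann setting; the Fisher-Rao step, by contrast, is essentially algebraic owing to the pointwise structure of its Euler-Lagrange equation.
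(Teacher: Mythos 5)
Your treatment of the Fisher--Rao step is essentially the paper's own argument: the authors likewise write the pointwise optimality condition \eqref{opt Fisher-Rao} as an implicit equation $f(\rho,\mu)=0$ and invoke the implicit function theorem to exhibit $\rho^{k+1}_{h,m}=R(\rho^{k+1/2}_{h,m})$ for a nondecreasing Lipschitz map $R$ with $R(0)=0$, then conclude by the $Lip\circ BV$ composition rule. Your change of variables $u=\sqrt{\rho}$ making the map $G$ transparently monotone is if anything cleaner, and the bound $(1-h/2)^{-2}$ you obtain on $R'$ is the correct sharp value (the paper quotes $1+h$, which is only accurate up to $O(h^2)$, but this is immaterial for the $e^{T(1+O(h))}$ constant).

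The Wasserstein step is where your plan has a genuine gap. The paper does \emph{not} argue by flow interchange there: it cites directly \cite[thm.\ 1.1]{DPMSV} (De Philippis, M\'esz\'aros, Santambrogio, Velichkov), which shows by an integration-by-parts/five-gradients computation on the JKO optimality condition that a single Wasserstein step for an internal energy $\int f(\rho)$ with $f$ convex never increases the $BV$ norm. Your proposed Matthes--McCann--Savar\'e flow interchange, with $(S_s)$ the porous-medium semigroup (i.e.\ the gradient flow of $\F_m$ itself), does not yield this. Indeed, inserting $S_s\rho^{k+1/2}_{h,m}$ into the minimization gives
\[
\frac{1}{2h}\W^2\bigl(S_s\rho^{k+1/2}_{h,m},\rho^k_{h,m}\bigr)+\F_m\bigl(S_s\rho^{k+1/2}_{h,m}\bigr)\;\geqslant\;\frac{1}{2h}\W^2\bigl(\rho^{k+1/2}_{h,m},\rho^k_{h,m}\bigr)+\F_m\bigl(\rho^{k+1/2}_{h,m}\bigr),
\]
and combining with the dissipation $\F_m(S_s\rho^{k+1/2}_{h,m})\leqslant\F_m(\rho^{k+1/2}_{h,m})$ tells you only that $s\mapsto\W^2(S_s\rho^{k+1/2}_{h,m},\rho^k_{h,m})$ is nondecreasing at $s=0^+$ --- no total-variation information appears, and the $BV$-contractivity of the semigroup never gets to act on $\rho^k_{h,m}$. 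In a genuine flow-interchange proof of $TV(\rho^{k+1/2}_{h,m})\leqslant TV(\rho^k_{h,m})$ one would instead have to take $(S_s)$ to be the Wasserstein gradient flow of an auxiliary functional $\Lambda$ equal to the total variation, apply the EVI for $\Lambda$ to bound $\frac{d^+}{ds}\big|_{0^+}\frac12\W^2(S_s\rho^{k+1/2}_{h,m},\rho^k_{h,m})\leqslant\Lambda(\rho^k_{h,m})-\Lambda(\rho^{k+1/2}_{h,m})$, and verify that $\F_m$ is dissipated along the $\Lambda$-flow --- a considerably more delicate argument (the total-variation flow is singular, and $\Lambda$ is not displacement convex in general) and not what you sketch. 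Replacing your Wasserstein-step paragraph with the citation of the $BV$-contractivity of the JKO step from \cite{DPMSV} would make the proof complete and match the paper.
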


\begin{proof}
The argument closely follows the lines of \cite[prop. 5.1]{GM}.
We first note from \cite[thm. 1.1]{DPMSV} that the $BV$-norm is nonincreasing during the Wasserstein step, 
$$
\| \rho_{h,m}^{k+1/2} \|_{BV(\Omega)} \leqslant \| \rho_{h,m}^{k} \|_{BV(\Omega)} .
$$
Using as before the implicit function theorem, we show below  that $\rho_{h,m}^{k+1} = R(\rho_{h,m}^{k+1/2})$ for some suitable $(1+h)$-Lispchitz function $R$. 
By standard $Lip\circ BV$ composition \cite{AFP} this will prove that
$$ \| \rho_{h,m}^{k+1} \|_{BV(\Omega)} \leqslant (1+h)\| \rho_{h,m}^{k+1/2} \|_{BV(\Omega)}$$
and will conclude the proof by immediate induction.

Indeed, we already know from \eqref{eq:encadrement_FR_HS} that $\rho_{h,m}^{k+1/2}$ and $\rho_{h,m}^{k+1}$ share the same support.
In this support and from \eqref{opt Fisher-Rao} it is easy to see that $\rho=\rho_{h,m}^{k+1}(x)$ is the unique positive solution of 
$f(\rho,\rho_{h,m}^{k+1/2}(x))=0$ with 
$$
f(\rho,\mu)= \sqrt{\rho}\left(1- \frac{h}{2} \left(1- \frac{m}{m-1}\rho^{m-1}  \right) \right) - \sqrt{\mu}.
$$
For $\mu >0$, the implicit function theorem gives the existence of a $\mathcal{C}^1$ map $R$ such that $f(\rho, \mu)=0\Leftrightarrow \rho =R(\mu)$, with $R(0)=0$.
An algebraic computation shows moreover that $0<\frac{d R}{d\mu}= - {\frac{\partial_\mu f}{\partial_\rho f}}_{|\rho =R(\mu)}\leqslant (1+h)$ uniformly in $m>1$, hence $R$ is $(1+h)$-Lipschitz as claimed and the proof is complete.

\end{proof}

\begin{prop}
\label{prop:CV_phm_rhohm}
Up to extraction of a discrete sequence $h\to 0,m\to\infty$, there holds
$$
\rho_{h,m},\,\trho_{h,m}\to \rho \qquad \mbox{strongly in }L^1(Q_T)
$$
$$
p_{h,m}\rightharpoonup p \quad\mbox{and}\quad\tilde p_{h,m}\rightharpoonup \tilde p \qquad\mbox{weakly in all }L^q(Q_T)
$$
for all $T>0$.
If in addition $mh\to 0$ then $p=\tilde p$.
\end{prop}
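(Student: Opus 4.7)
The plan is to combine the time compactness given by the H\"older estimate \eqref{part42-holder estimates} with the spatial compactness from the BV bound of Lemma \ref{prop:BV_estimate}, and then to handle the pressures separately by exploiting the universal bound $\rho_{h,m}\leqslant 1$. First, the BV control yields equi-compactness in space in $L^1(\Omega)$ uniformly in $t\in[0,T]$ by Rellich, while \eqref{part42-holder estimates} gives equicontinuity in time valued in $(\M^+,\KFR)$; since $\KFR$ metrizes narrow convergence, I would apply the Rossi-Savar\'e extension of the Aubin-Lions lemma \cite{RS} exactly as in the proof of Theorem \ref{theo:existence differente energies} to extract a discrete subsequence along which $\trho_{h,m}$ converges strongly to some $\rho$ in $L^1(Q_T)$, for every fixed $T>0$ via a standard diagonal argument. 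The strong convergence $\rho_{h,m}\to\rho$ in $L^1(Q_T)$ then follows immediately from the elementary $L^1$ comparison \eqref{eq:comparison_p_ptilda_HS}.

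For the pressures, Lemma \ref{lem:borne sup inf 1} provides the universal pointwise bound
$$
0\leqslant p_{h,m},\ \tilde p_{h,m}\ \leqslant\ \tfrac{m}{m-1}\ \leqslant\ 2 \qquad (m\geqslant 2),
$$
so Banach-Alaoglu gives, up to a further extraction, weak-$*$ convergence in $L^\infty(Q_T)$ to some limits $p,\tilde p\in L^\infty$, which in particular implies weak convergence in every $L^q(Q_T)$, $1\leqslant q<\infty$.

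The delicate step is to identify $p=\tilde p$ under the compatibility $mh\to 0$. Raising the pointwise comparison \eqref{eq:encadrement_FR_HS} to the $(m-1)$-th power and multiplying by $\frac{m}{m-1}$ yields
$$
(1-ch)^{m-1}\,\tilde p_{h,m}\ \leqslant\ p_{h,m}\ \leqslant\ (1+h)^{m-1}\,\tilde p_{h,m}\qquad \text{a.e.,}
$$
so that $|p_{h,m}-\tilde p_{h,m}|\leqslant \max\{(1+h)^{m-1}-1,\ 1-(1-ch)^{m-1}\}\,\tilde p_{h,m}$. Expanding $(1\pm\alpha h)^{m-1}=1+O(mh)$ uniformly as $mh\to 0$ and using $\tilde p_{h,m}\leqslant 2$, this gives
$$
\|p_{h,m}-\tilde p_{h,m}\|_{L^\infty(Q_T)}\ \leqslant\ C\,mh\ \xrightarrow[mh\to 0]{}\ 0,
$$
forcing the two weak limits to agree, $p=\tilde p$.

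I expect no real difficulty with the compactness part, which mirrors closely the analysis of Section \ref{sec:KFRsplitting}. The main obstacle is precisely the pressure identification in the last step: the $(m-1)$-th power amplifies the $O(h)$ discrepancy between $\rho_{h,m}^{k+1}$ and $\rho_{h,m}^{k+1/2}$ recorded in \eqref{eq:encadrement_FR_HS}, and the compatibility $mh\to 0$ is exactly what is needed to keep this amplification negligible. This is the structural reason why the coupling between the time step and the regularization parameter cannot be avoided here.
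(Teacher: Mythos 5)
Your proof is correct and follows essentially the same route as the paper: BV spatial compactness plus $\KFR$-H\"older time compactness fed into Rossi--Savar\'e gives the strong $L^1$ convergence, and the uniform bound $p_{h,m}\leqslant \frac{m}{m-1}\leqslant 2$ gives weak compactness for the pressures. The only (cosmetic) difference is in the identification $p=\tilde p$: the paper applies the $(m-1)$-Lipschitz bound for $z\mapsto z^{m-1}$ on $[0,1]$ to the $L^1$ comparison \eqref{eq:comparison_p_ptilda_HS}, obtaining $\|p_{h,m}-\tilde p_{h,m}\|_{L^1}\leqslant C_T\,mh$, whereas you raise the pointwise sandwich \eqref{eq:encadrement_FR_HS} to the $(m-1)$-th power and use $\tilde p_{h,m}\leqslant 2$ to get the slightly stronger $L^\infty$ bound $\|p_{h,m}-\tilde p_{h,m}\|_{L^\infty}\leqslant C\,mh$; both exploit the same input and both rely on $mh\to0$ in exactly the same way.
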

\begin{proof}
The first part of the statement follows exactly as in section \ref{sec:KFRsplitting}, exploiting the $\frac{1}{2}$-H\"older estimates \eqref{part42-holder estimates} and the space compactness from Proposition~\ref{prop:BV_estimate} in order to apply the Rossi-Savar\'e theorem \cite{RS}.
The fact that $\rho_{h,m},\trho_{h,m}$ have the same limit comes from \eqref{eq:comparison_p_ptilda_HS}.

For the pressures, we simply note from $\rho_{h,m}\leqslant 1$ and $m\gg 1$ that $p_{h,m}=\frac{m}{m-1}\rho_{h,m}^{m-1}\leqslant 2\rho_{h,m}$ is bounded in $L^1\cap L^\infty(Q_T)$ uniformly in $h,m$ in any finite time interval $[0,T]$.
Thus up to extraction of a further sequence we have $p_{h,m}\rightharpoonup p$ in all $L^q(Q_T)$, and likewise for $\tilde p_{h,m}\rightharpoonup \tilde p$.

Finally, we only have to check that $p=\tilde p$ if $hm\to 0$.
Because $\rho_{h,m},\trho_{h,m}\leqslant 1$ and $z\mapsto z^{m-1}$ is $(m-1)$-Lipschitz on $[0,1]$ we have for all fixed $t\geqslant 0$ that
\begin{eqnarray*}
\int_\Omega |p_{m,h}(t,\cdot) - \tilde{p}_{m,h}(t,\cdot) | &= & \int_\Omega \frac{m}{m-1}|\rho_{h,m}^{m-1}(t,\cdot)-\trho_{h,m}^{m-1}(t,\cdot) |\\
&\leqslant &  m\int_\Omega |{\rho_{h,m}(t)}-{\trho_{h}(t)} |
\leqslant  C_Thm \longrightarrow 0,
\end{eqnarray*}
where we used \eqref{eq:comparison_p_ptilda_HS} in the last inequality.
Hence $p=\tilde p$ and the proof is complete.

\end{proof}

In order to pass to the limit in the diffusion term $\dive(\rho\nabla p)$ we first improve the convergence of $\tilde{p}_{h,m}$:

\begin{lem}
\label{borne pression porous media}
There exists a constant $C_T$, independent of  $h$ and $m$, such that
$$\| \tilde{p}_{h,m} \|_{L^2((0,T),H^1(\Omega))} \leqslant C_T$$
for all $T>0$.
Consequently, up to a subsequence, $\tilde{p}_{h,m}$ converges weakly in $L^2((0,T),H^1(\Omega))$ to $p$.
\end{lem}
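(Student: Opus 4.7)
The plan is to revisit the proof of the analogous pressure estimate \eqref{estimate P1} from Section~\ref{sec:KFRsplitting}, this time keeping careful track of the dependence on the porous-media exponent $m$ so that the resulting bound is uniform as $m\to\infty$ and $h\to 0$ simultaneously. The starting point is the Wasserstein-step Euler--Lagrange equation \eqref{opt Wasserstein}, which in the present setting (the only potential being the constant $V\equiv -1$, so $\nabla V=0$) reads
\[
\nabla P_m(\rho_{h,m}^{k+1/2})\;=\;-\frac{\rho_{h,m}^{k+1/2}\,\nabla\varphi_h^{k+1/2}}{h}\qquad\text{a.e.,}
\]
where $P_m(\rho)=\rho^m$. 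Squaring, integrating over $\Omega$, and crucially using the universal $L^\infty$ bound $\rho_{h,m}^{k+1/2}\leq 1$ from Lemma~\ref{lem:borne sup inf 1} to replace $(\rho_{h,m}^{k+1/2})^2$ by $\rho_{h,m}^{k+1/2}$, the Benamou--Brenier formula yields
\[
\int_\Omega|\nabla P_m(\rho_{h,m}^{k+1/2})|^2\,dx\;\leq\;\frac{1}{h^2}\int_\Omega\rho_{h,m}^{k+1/2}|\nabla\varphi_h^{k+1/2}|^2\,dx\;=\;\frac{\W^2(\rho_{h,m}^k,\rho_{h,m}^{k+1/2})}{h^2}.
\]
Multiplying by $h$, summing over $k\leq N=\lfloor T/h\rfloor$, and combining with the already-established total-square-distance estimate $\sum_k\W^2/h\leq C_T$ (uniform in $h,m$, since $\F_m(\rho_{h,m}^N)\geq 0$ and $\F_m(\rho_0)\leq |\Omega|/(m-1)$), we obtain
\[
\int_0^T\|\nabla P_m(\tilde\rho_{h,m}(t))\|_{L^2(\Omega)}^2\,dt\;\leq\;C_T
\]
with a constant independent of $h$ and $m$.

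The main obstacle, and the genuinely new point compared with Section~\ref{sec:KFRsplitting}, is converting this control of $\nabla P_m=\nabla\tilde\rho^m$ into the desired control of $\nabla\tilde p_{h,m}=\tfrac{m}{m-1}\nabla\tilde\rho^{m-1}$. The two gradients are related by $\tilde\rho\,\nabla\tilde p=\nabla P_m$ on the support of $\tilde\rho$, and naively dividing by $\tilde\rho$ loses control wherever $\tilde\rho$ is small. The plan to circumvent this is to work directly from the dual identity $\nabla\tilde p=-\nabla\varphi_h/h$ a.e. on $\{\tilde\rho>0\}$ (while $\tilde p=0$, hence $\nabla\tilde p=0$ in the Sobolev sense, on $\{\tilde\rho=0\}$), and to split the integral according to the size of $\tilde p$. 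On the region $\{\tilde p\geq\delta\}$ the pointwise inversion gives $\tilde\rho\geq(\delta(m-1)/m)^{1/(m-1)}$, which tends to $1$ as $m\to\infty$ and therefore is bounded below by a fixed constant $c_\delta$ uniformly in $m$ large; on such a region the unweighted $L^2$-norm of $\nabla\varphi_h/h$ is controlled by $c_\delta^{-1}$ times the weighted one, and summation again produces a $C_T$ bound. On the complementary region $\{\tilde p<\delta\}$, one exploits that $|\nabla\tilde p|=m\tilde\rho^{m-2}|\nabla\tilde\rho|$ with $\tilde\rho^{m-2}$ very small wherever $\tilde\rho$ is bounded away from $1$, together with the $BV$ control on $\tilde\rho$ from Lemma~\ref{prop:BV_estimate}, to absorb the prefactor $m$ and keep the contribution negligible uniformly in $m$.

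Once this unweighted $L^2(L^2)$-bound on $\nabla\tilde p_{h,m}$ is secured, combining with the trivial pointwise bound $\tilde p_{h,m}=\tfrac{m}{m-1}\tilde\rho_{h,m}^{m-1}\leq 2$ (valid for $m$ large since $\tilde\rho_{h,m}\leq 1$) yields the claimed $\|\tilde p_{h,m}\|_{L^2((0,T);H^1(\Omega))}\leq C_T$. The weak convergence $\tilde p_{h,m}\rightharpoonup p$ in $L^2((0,T);H^1(\Omega))$ then follows from Banach--Alaoglu together with the identification of the weak limit already given in Proposition~\ref{prop:CV_phm_rhohm}.
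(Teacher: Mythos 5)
Your proposal follows a genuinely different route from the paper, but it has a real gap at the crucial step.

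The paper's proof uses the flow interchange technique of Matthes--McCann--Savar\'e: one runs an auxiliary Wasserstein gradient flow $\eta(s)$ for $\G(\rho)=\int_\Omega\frac{\rho^{m-1}}{m-2}+\eps\int_\Omega\rho\log\rho$ started at $\rho^{k+1/2}_{h,m}$, and combines the EVI inequality with the minimality of $\rho^{k+1/2}_{h,m}$ in \eqref{splitting scheme implicit} to obtain, after $\eps\searrow 0$,
\begin{equation*}
h\int_\Omega\frac{m-1}{m}\bigl|\nabla p^{k+1/2}_{h,m}\bigr|^2\;\leqslant\;\F_{m-1}(\rho^k_{h,m})-\F_{m-1}(\rho^{k+1}_{h,m})+C_T h,
\end{equation*}
which telescopes to the desired bound with an $m$-uniform constant thanks to $\F_{m-1}(\rho_0)\leq\|\rho_0\|_{L^1}$. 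Your approach instead starts from the pointwise consequence $\nabla\tilde p_{h,m}=-\nabla\varphi_h/h$ of the Euler--Lagrange identity on $\{\tilde\rho_{h,m}>0\}$, and attempts to upgrade the $\tilde\rho$-weighted $L^2$ control coming from $\W^2/h^2$ into an unweighted one by splitting according to the size of $\tilde p_{h,m}$.

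The high-pressure region $\{\tilde p_{h,m}\geq\delta\}$ part of your argument is sound, since there $\tilde\rho_{h,m}\geq c_\delta$ uniformly in $m$ large. The gap is in the complementary region $\{\tilde p_{h,m}<\delta\}$. You write $|\nabla\tilde p_{h,m}|=m\,\tilde\rho_{h,m}^{m-2}|\nabla\tilde\rho_{h,m}|$ and invoke the $BV$ control of Lemma~\ref{prop:BV_estimate} to absorb the factor $m$, but this cannot work as stated: Lemma~\ref{prop:BV_estimate} controls $\nabla\tilde\rho_{h,m}$ only as a finite \emph{measure}, so the quantity $\int\tilde\rho_{h,m}^{2(m-2)}|\nabla\tilde\rho_{h,m}|^2$ is not even a priori defined, let alone bounded. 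Moreover, on $\{\tilde p_{h,m}<\delta\}$ the density can be arbitrarily close to $0$, so there is no lower bound with which to trade the weight $\tilde\rho_{h,m}$ in the Benamou--Brenier integrand for an unweighted $L^2$ bound; and the prefactor $m^2$ grows unboundedly while $BV$ only gives an $m$-uniform $L^1$ bound on $\nabla\tilde\rho_{h,m}$. In short, the obstruction you correctly identified --- that dividing by $\tilde\rho_{h,m}$ is dangerous near its vanishing set --- is not actually resolved by this splitting, and it is precisely the point the flow interchange is designed to sidestep: it yields the unweighted $L^2((0,T),H^1)$ bound on $\tilde p_{h,m}$ directly from a one-step dissipation identity, with the $\eps$-regularized auxiliary flow supplying the smoothness needed to justify the integration by parts and the subsequent lower semicontinuity argument. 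To repair your route you would need some genuinely new input --- for instance a pointwise lower bound on $\tilde\rho_{h,m}$ on the region where $\nabla\tilde p_{h,m}$ concentrates, which the scheme does not provide.
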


\begin{proof}
The proof is based on the flow interchange technique developed by Matthes, McCann and Savar\'e in \cite{MMCS}.
Let $\eta$ be the (smooth) solution of 
$$ \left\{ \begin{array}{l}
\partial_t \eta = \Delta \eta^{m-1} + \eps \Delta \eta,\\
\eta|_{t=0}=\rho^{k+1/2}_{h,m}.
\end{array}\right.
$$
It is well known \cite{AGS} that $\eta$ is the Wasserstein gradient flow of 
$$ 
\G(\rho):= \int_\Omega \frac{\rho^{m-1}}{m-2}  + \eps \int_\Omega \rho \log(\rho).
$$
Since $\G$ is geodesically $0$-convex, $\eta$ satisfies the Evolution Variational Inequality (EVI)
$$
\left.\frac{1}{2}{\frac{d^+}{dt}}\right|_{t=s} \W^2(\eta(s), \rho) \leqslant \G(\rho) -\G(\eta(s)),
$$
for all $s>0$ and for all $\rho \in \Paa(\Omega)$, where  $\frac{d^+}{dt}f(t):=\limsup\limits_{s\rightarrow 0^+} \frac{f(t+s) -f(t)}{s}$. 
By optimality of $\rho^{k+1/2}_{h,m}$ in \eqref{splitting scheme implicit}, we obtain that
$$
\left.\frac{1}{2}{\frac{d^+}{dt}}\right|_{t=s} \W^2(\eta(s),\rho_{h,m}^{k})\geqslant -h \left.{\frac{d^+}{dt}}\right|_{t=s} \F_m(\eta(s)).
$$
Since $\eta$ is smooth due to the regularizing $\eps\Delta$ term, we can legitimately integrate by parts for all $s>0$
\begin{eqnarray*}
\frac{d}{ds} \F_m(\eta(s))&=& \int_\Omega \frac{m}{m-1} \eta(s)^{m-1}(\Delta\eta(s)^{m-1} + \eps \Delta \eta(s) )\\
& = & -\int_\Omega \frac{m}{m-1} |\nabla \eta(s)^{m-1} |^2 -\eps \int_\Omega m\eta(s)^{m-2}|\nabla \eta(s) |^2\\
& \leqslant & -\int_\Omega \frac{m}{m-1} |\nabla \eta(s)^{m-1} |^2
=-\frac{m-1}{m} \int_\Omega \left|\nabla \left(\frac{m}{m-1}\eta(s)^{m-1}\right) \right|^2
\end{eqnarray*}

Remarking that $\frac{m}{m-1}\eta(s)^{m-1}\to\frac{m}{m-2}\rho_{h,m}^{k+1/2}=p_{h,m}^{k+1/2}$ as $s\to 0$, an easy lower semi-continuity argument gives that
$$
\int_\Omega \frac{m-1}{m} |\nabla p_{h,m}^{k+1/2}|^2= \int_\Omega \frac{m}{m-1} |\nabla (\rho_{h,m}^{k+1/2})^{m-1} |^2 \leqslant \liminf_{s\searrow 0}\left.{\frac{d^+}{dt}}\right|_{t=s} \F_m(\eta(s)).
$$

Then we have
\begin{align*}
 h\int_\Omega \frac{m-1}{m} |\nabla p_{h,m}^{k+1/2}|^2 & \leqslant \F_{m-1}(\rho_{h,m}^k) - \F_{m-1}(\rho_{h,m}^{k+1/2})\\
 & + \eps \left( \int_\Omega \rho_{h,m}^k \log(\rho_{h,m}^k) -\int_\Omega \rho_{h,m}^{k+1/2} \log(\rho_{h,m}^{k+1/2}) \right).
\end{align*}
First arguing as in Proposition \ref{prop:decr F1} to control
$$
\F_{m-1}(\rho_{h,m}^{k+1}) \leqslant \F_{m-1}(\rho_{h,m}^{k+1/2}) +C_Th,
$$
 and then passing to the limit $\eps \searrow 0$, we obtain
$$
h\int_\Omega \frac{m-1}{m} |\nabla p_{h,m}^{k+1/2}|^2  \leqslant \F_{m-1}(\rho_{h,m}^k) - \F_{m-1}(\rho_{h,m}^{k+1}) +C_Th.
$$
Summing over $k$ gives
$$
\int_0^T \int_\Omega  |\nabla \tilde{p}_{h,m}(t,x)|^2 \,dxdt \leqslant \frac{m}{m-1}( \F_{m-1}(\rho_0)- \F_{m-1}(\rho^N_{h,m})+C_T)
\leqslant 2 \F_{m-1}(\rho_0)+C_T
$$
for all $T<+\infty$.
Due to $\rho_0\leqslant 1$ and $m\gg1 $ we can bound $\F_{m-1}(\rho_0)=\frac{1}{m-2}\int\rho_0^{m-1}\leqslant \frac{1}{m-2}\int\rho_0\leqslant \|\rho_0 \|_{L^1(\Omega)}$ and the result finally follows.
\end{proof}

\subsection{Properties of the pressure $p$ and conclusion}
\label{subsec:prop_pressure_HS}
We start by showing that the limits $\rho,p$ satisfy the compatibility conditions in \eqref{equation HS}.

\begin{lem}
\label{lem:properties pressure}
There holds
$$ 
0\leqslant \rho,p\leqslant 1 \quad \text{and} \quad p(1-\rho) =0  \, \text{ a.e. in }Q_T.
$$
\end{lem}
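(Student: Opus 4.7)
The plan is to pass the pointwise bounds to the limit and then derive the complementarity condition via a weak–strong pairing argument.

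For the bound $0\le\rho\le 1$, Lemma~\ref{lem:borne sup inf 1} gives $0\le\tilde\rho_{h,m}\le 1$ a.e., and since $\tilde\rho_{h,m}\to\rho$ strongly in $L^1(Q_T)$ by Proposition~\ref{prop:CV_phm_rhohm}, the bounds are preserved in the limit (up to extraction of a further a.e.-convergent subsequence). For $0\le p\le 1$, I write $\tilde p_{h,m}=\frac{m}{m-1}\tilde\rho_{h,m}^{m-1}$, which immediately gives $0\le\tilde p_{h,m}\le\frac{m}{m-1}$ a.e. Testing the weak convergence $\tilde p_{h,m}\rightharpoonup p$ in $L^2(Q_T)$ (Lemma~\ref{borne pression porous media}) against an arbitrary nonnegative $\phi\in L^2(Q_T)$ and letting $h\to 0$, $m\to\infty$ yields $0\le\int p\,\phi\le\int\phi$, hence $0\le p\le 1$ a.e.

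The real content is the complementarity $p(1-\rho)=0$, which I would derive by computing in two different ways the quantity
$$
I_{h,m}:=\int_{Q_T}\tilde p_{h,m}(1-\tilde\rho_{h,m})=\frac{m}{m-1}\int_{Q_T}\tilde\rho_{h,m}^{m-1}(1-\tilde\rho_{h,m}).
$$
On the one hand, the scalar function $z\mapsto z^{m-1}(1-z)$ attains its maximum on $[0,1]$ at $z=\tfrac{m-1}{m}$ with value $\bigl(\tfrac{m-1}{m}\bigr)^{m-1}\tfrac{1}{m}\le\tfrac{1}{m}$, so $I_{h,m}\le\frac{|Q_T|}{m-1}\to 0$. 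On the other hand, $\tilde p_{h,m}\rightharpoonup p$ weakly in $L^2(Q_T)$ and $1-\tilde\rho_{h,m}\to 1-\rho$ strongly in $L^2(Q_T)$ (by dominated convergence combined with the strong $L^1$ convergence and the uniform $L^\infty$ bound), so the weak–strong pairing gives $I_{h,m}\to\int_{Q_T}p(1-\rho)$. Therefore $\int_{Q_T}p(1-\rho)=0$, and since both factors are nonnegative this forces $p(1-\rho)=0$ a.e.

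The main obstacle is precisely this last step: one needs the algebraic identity $\tilde p_{h,m}(1-\tilde\rho_{h,m})=\frac{m}{m-1}\tilde\rho_{h,m}^{m-1}(1-\tilde\rho_{h,m})$ combined with the universal bound $\tilde\rho_{h,m}\le 1$ from Lemma~\ref{lem:borne sup inf 1} in order to force an $O(1/m)$ vanishing rate. Without the sharp upper bound $\tilde\rho_{h,m}\le 1$ (which avoids any dependence on $k$ or $T$) the calibration $mh\to 0$ would not even be needed here; indeed this part of the argument does not require any compatibility between $h$ and $m$, only $m\to\infty$.
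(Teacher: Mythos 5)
Your proof is correct and takes a genuinely different route from the paper for the complementarity condition $p(1-\rho)=0$. The paper first establishes the \emph{pointwise} a.e.\ convergence $p_{h,m}(1-\rho_{h,m})\to 0$ by a case analysis on whether the limit $\rho(t,x)$ equals $1$ or is strictly below $1$, then upgrades this to $\int_{Q_T}p_{h,m}(1-\rho_{h,m})\varphi\to 0$ via dominated convergence against compactly supported test functions $\varphi\geqslant 0$, and finally identifies the limit as $\int_{Q_T}p(1-\rho)\varphi$ by the same strong--weak pairing you invoke. You instead observe the clean quantitative bound $\sup_{z\in[0,1]}z^{m-1}(1-z)=\bigl(\tfrac{m-1}{m}\bigr)^{m-1}\tfrac{1}{m}\leqslant\tfrac{1}{m}$, which kills the entire integral $I_{h,m}$ at rate $O(1/m)$ in one stroke and makes the test-function step and the pointwise dichotomy unnecessary. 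Your version is arguably tidier; the paper's pointwise argument has the (here unused) advantage of not depending on the precise algebraic form of $p_{h,m}=\tfrac{m}{m-1}\rho_{h,m}^{m-1}$.

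One small inaccuracy in your closing remark: you run the argument with the half-step quantities $\tilde p_{h,m},\tilde\rho_{h,m}$, whose weak $L^q$ limit is $\tilde p$ by Proposition~\ref{prop:CV_phm_rhohm}, so what you literally obtain is $\tilde p(1-\rho)=0$; turning that into $p(1-\rho)=0$ uses $p=\tilde p$, which is exactly where $mh\to 0$ enters. So the argument \emph{as written} does rely on the calibration. The claim that no compatibility is needed is correct in spirit, but to make it literally true you should run the same computation on the full-step quantities $p_{h,m},\rho_{h,m}$ (which satisfy the same $0\leqslant\rho_{h,m}\leqslant 1$ bound and the same weak/strong convergences); this is precisely what the paper does, and it makes the present lemma independent of $mh\to 0$.
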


\begin{proof}
By Lemma~\ref{lem:borne sup inf 1} it is obvious that $ 0\leqslant \rho \leqslant 1$ and $0\leqslant p \leqslant 1$ are inherited from $0\leqslant \rho_{h,m}\leqslant 1$ and $0\leqslant p_{h,m}=\frac{m}{m-1}\rho^{m-1}_{h,m}\leqslant \frac{m}{m-1}$.

In order to prove that $p(1-\rho)=0$, we first observe that
$$
p_{h,m}(1-\rho_{h,m})\to 0\qquad \mbox{a.e. in }Q_T.
$$
Indeed, since $\rho_{h,m}\to \rho$ strongly in $L^1(Q_T)$ we have $\rho_{h,m}(t,x)\to \rho(t,x)$ a.e.
If the limit $\rho(t,x)<1$ then $\rho_{h,m}(t,x)\leqslant (1-\eps)$ for small $h$ and large $m$.
Hence $p_{h,m}(t,x)=\frac{m}{m-1}\rho_{h,m}^{m-1}\leqslant \frac{m}{m-1}(1-\eps)^{m-1}\to 0$ while $1-\rho_{h,m}$ remains bounded, and therefore the product $p_{h,m}(1-\rho_{h,m})\to 0$.
Now if the limit $\rho(t,x)=1$ then the pressure $p_{h,m}=\frac{m}{m-1}\rho_{h,m}^{m-1}\leqslant \frac{m}{m-1}$ remains bounded, while $1-\rho_{h,m}(t,x)\to 0$ hence the product goes to zero in this case too.

Thanks to the uniform $L^\infty$ bounds $\rho_{h,m}\leqslant 1$ and $p_{h,m}\leqslant \frac{m}{m-1}\leqslant 2$ we can apply Lebesgue's convergence theorem to deduce from this pointwise a.e. convergence that, for all fixed nonnegative $\varphi\in\mathcal \mathcal C^{\infty}_c(Q_T)$, there holds
$$
\lim \int_{Q_T}p_{h,m}(1-\rho_{h,m})\varphi =0.
$$
On the other hand since $\rho_{h,m}\to \rho$ strongly in $L^1(Q_T)$ hence a.e, and because $0\leqslant \rho_{h,m}\leqslant 1$, we see that $(1-\rho_{h,m})\varphi\to (1-\rho)\varphi$ in all $L^q(Q_T)$.
From Proposition~\ref{prop:CV_phm_rhohm} we also had that $p_{h,m}\rightharpoonup p$ in all $L^q(Q_T)$, hence by strong-weak convergence we have that
$$
\int_{Q_T}p (1-\rho)\varphi=\lim \int_{Q_T}p_{h,m}(1-\rho_{h,m})\varphi=0
$$
for all $\varphi\geqslant 0$.
Because $p(1-\rho)\geqslant 0$ we conclude that $p(1-\rho)=0$ a.e. in $Q_T$ and the proof is achieved.

\end{proof}

We end this section with
\begin{proof}[Proof of Theorem \ref{existence HS implicit}]
We only sketch the argument and refer to \cite{GM} for the details.
Fix any $0<t_1<t_2$ and $\varphi\in \mathcal C^2_c(\R^d)$.
 Exploiting the Euler-Lagrange equations \eqref{opt Wasserstein}\eqref{opt Fisher-Rao} and summing from $k=k_1=\lfloor t_1/h \rfloor$ to $k=k_2-1=\lfloor t_2/h\rfloor-1$, we first obtain
 $$
 \int_{\R^d}\rho_{h,m}(t_2)\varphi-\rho_{h,m}(t_1)\varphi
 +\int_{k_1h}^{k_2h}\int_{\R^d}\trho_{h,m}\nabla \tilde p_{h,m}\cdot\nabla\varphi
 =-\int_{k_1h}^{k_2h}\int_{\R^d}\rho_{h,m}(1-p_{h,m})\varphi + R(h,m),
 $$
 where the remainder $R(h,m)\to 0$ for fixed $\varphi$.
 The strong convergence $\rho_{h,m},\tilde\rho_{h,m}\to \rho$ and the weak convergences $\nabla\tilde p_{h,m}\rightharpoonup \nabla \tilde p=\nabla p$ and $p_{h,m}\rightharpoonup p$ are then enough pass to the limit to get the corresponding weak formulation for all $0<t_1<t_2$.
 Moreover since the limit $\rho\in \C([0,T];\M^+_{\KFR})$ the initial datum $\rho(0)=\rho_0$ is taken at least in the sense of measures.
 This gives an admissible weak formulation of \eqref{equation HS}, and the proof is complete.
\end{proof}

\subsection{Numerical simulation}

The constructive scheme \eqref{splitting scheme implicit} naturally leads to a fully discrete algorithm, simply discretizing the minimization problem in space for each $\W,\FR$ step.
We use again the {\rm ALG2-JKO} scheme \cite{BCL} for the Wasserstein steps.
As already mentioned the Fisher-Rao step is a mere convex pointwise minimization problem, here explicitly given by: for all $x \in \Omega$,
$$ 
\rho_{h,m}^{k+1}(x) = \argmin_{\rho \geq 0}
\left\{ 4\left| \sqrt{\rho} - \sqrt{\rho_{h,m}^{k+1/2}(x)} \right|^2 + 2h \left( \frac{\rho^{m}}{m-1} -1 \right)\right\}
$$
and poses no difficulty in the practical implementation using a standard Newton method.

Figure \ref{figure m=100 } depicts the evolution of the numerical solution $\rho_{h,m}$ for $m=100$ and with a time step $h=0.005$.
We remark that the tumor first saturates the constraint ($\rho\nearrow 1$) in its initial support, and then starts diffusing outwards.
This is consistent with the qualitative behaviour described in \cite{PQV}.

\begin{figure}[h!]

\begin{tabular}{@{\hspace{0mm}}c@{\hspace{4mm}}c@{\hspace{4mm}}c@{\hspace{4mm}}c@{\hspace{4mm}}c@{\hspace{4mm}}}

\centering
\includegraphics[scale=0.19]{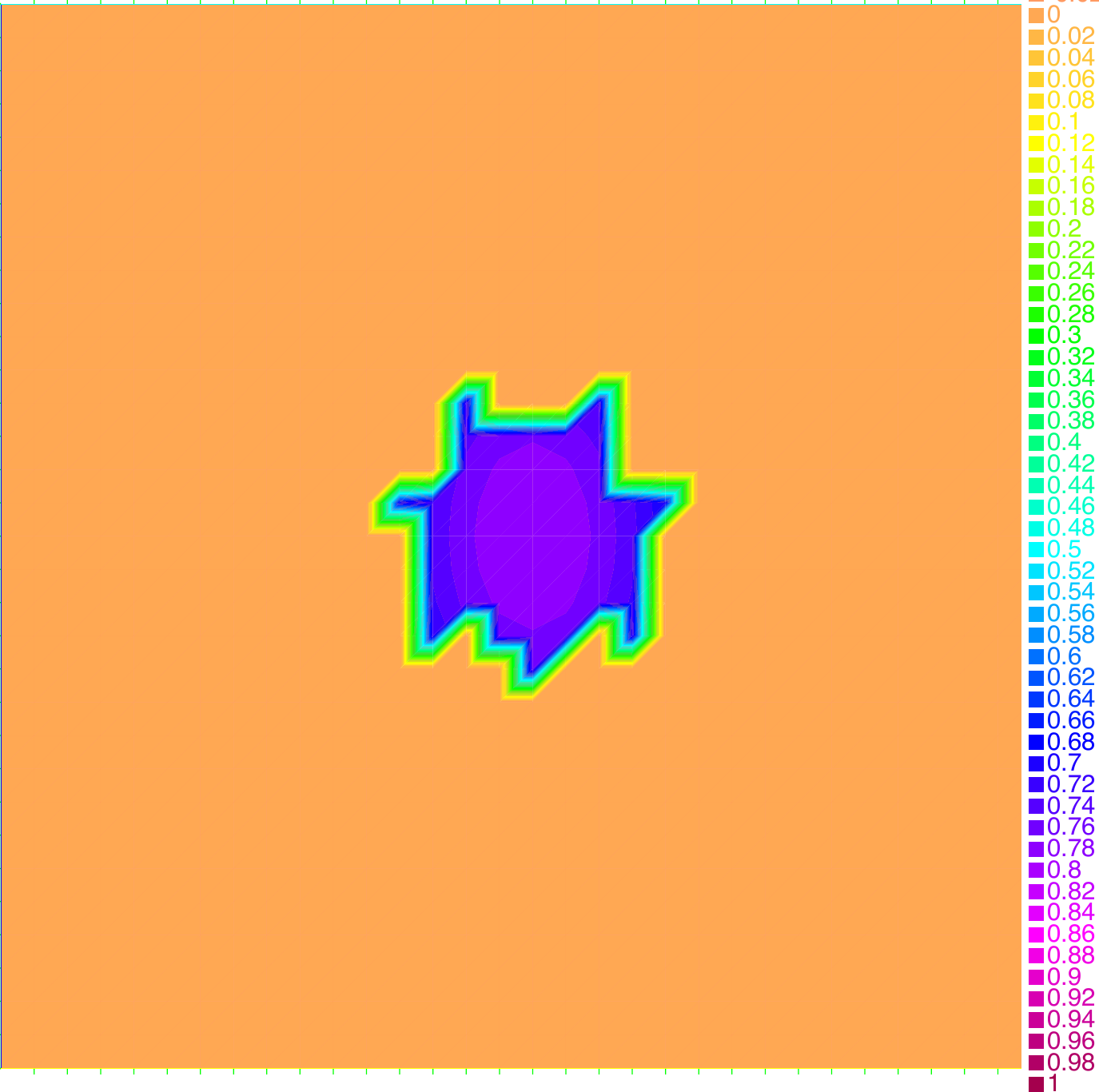}&
\includegraphics[scale=0.19]{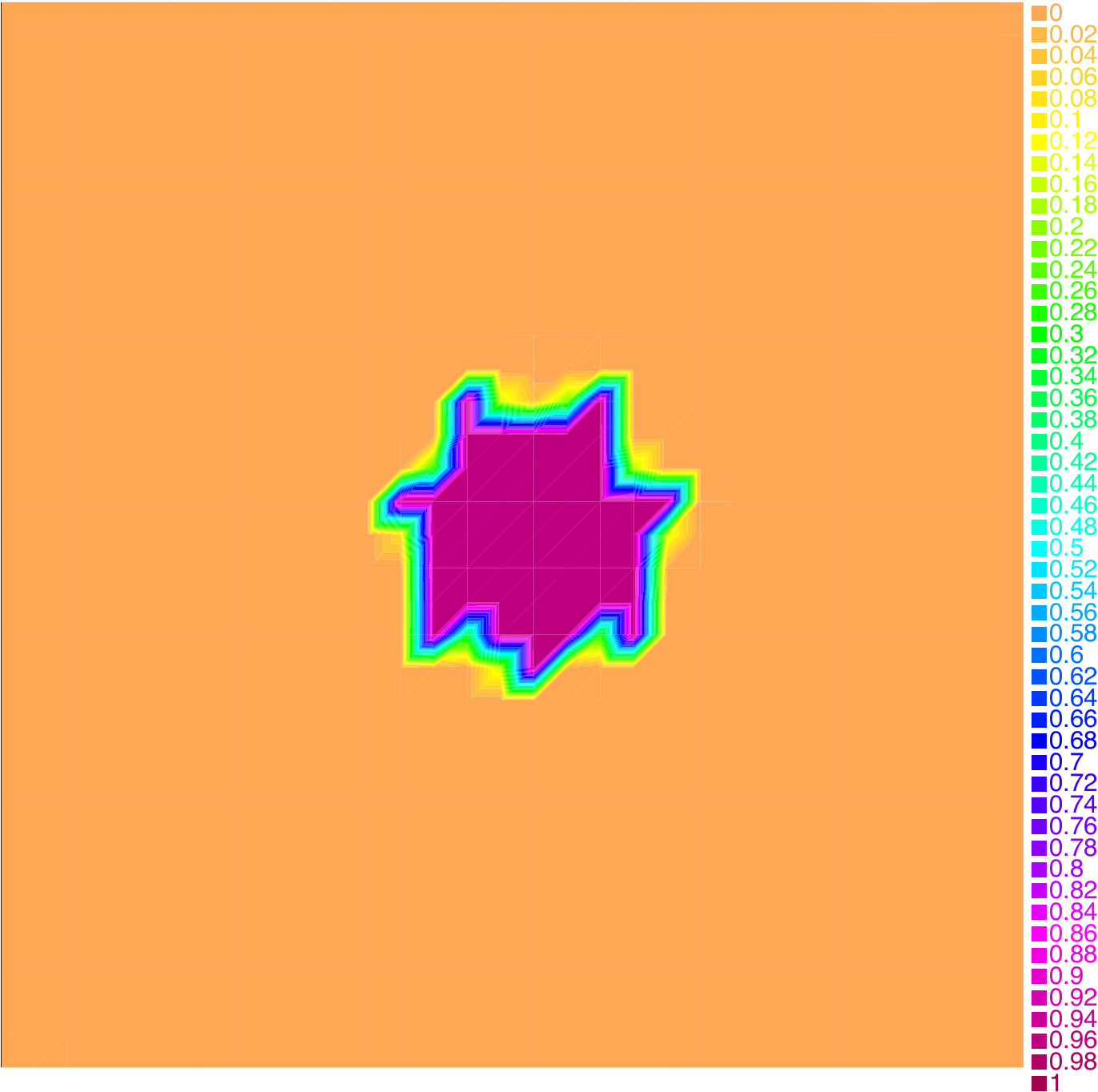}&
\includegraphics[scale=0.19]{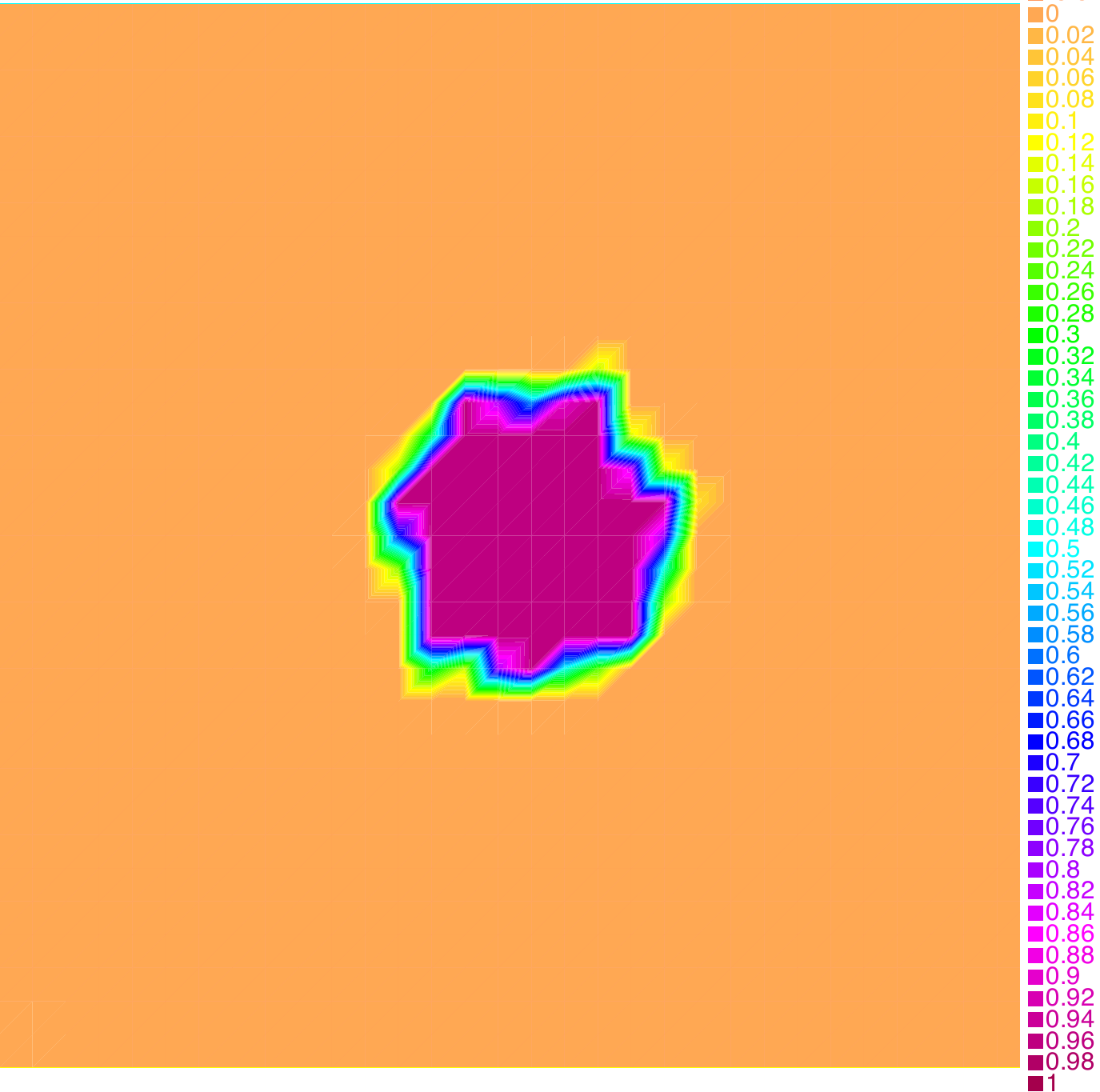}&
\includegraphics[scale=0.19]{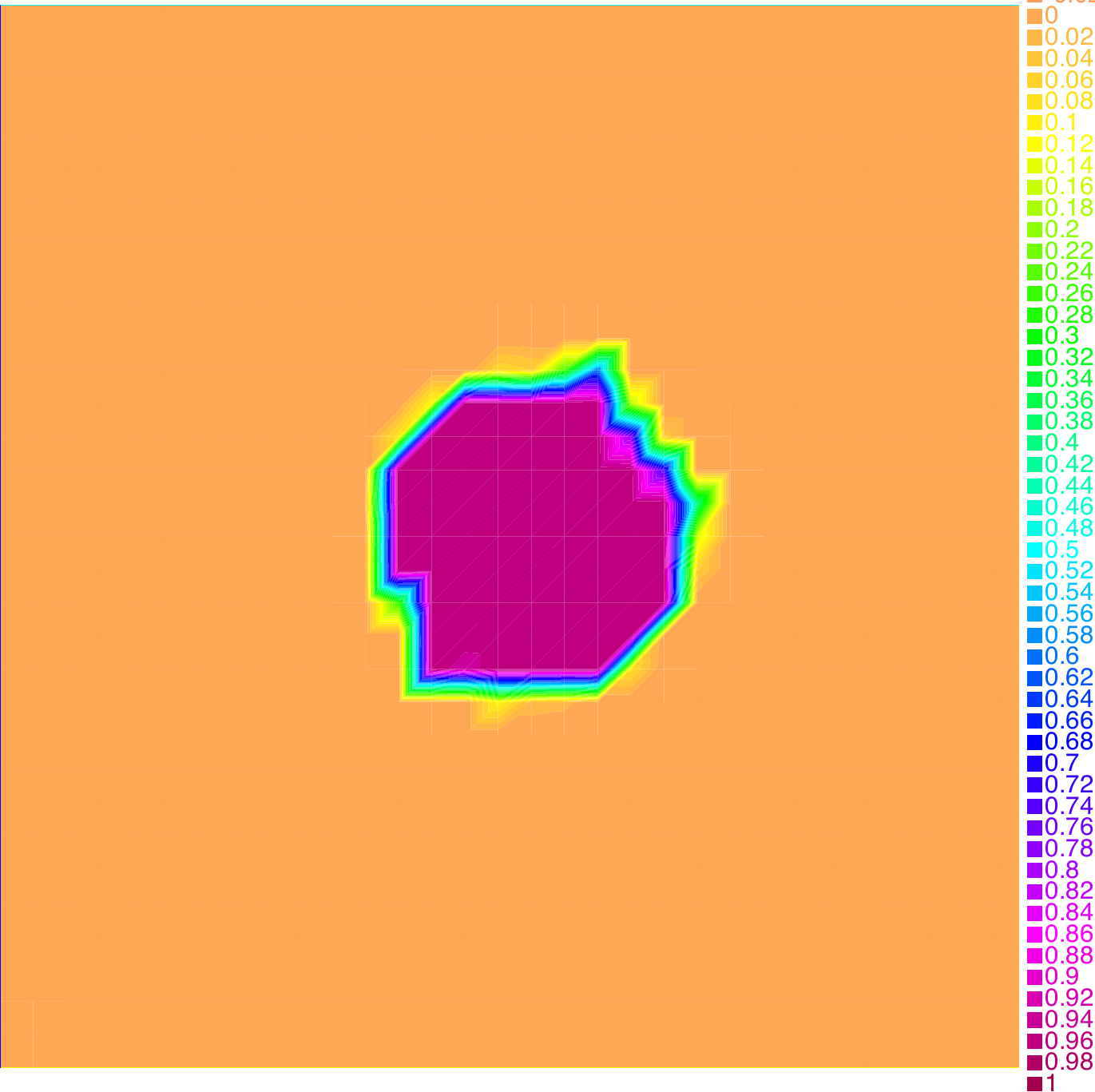}&
\includegraphics[scale=0.19]{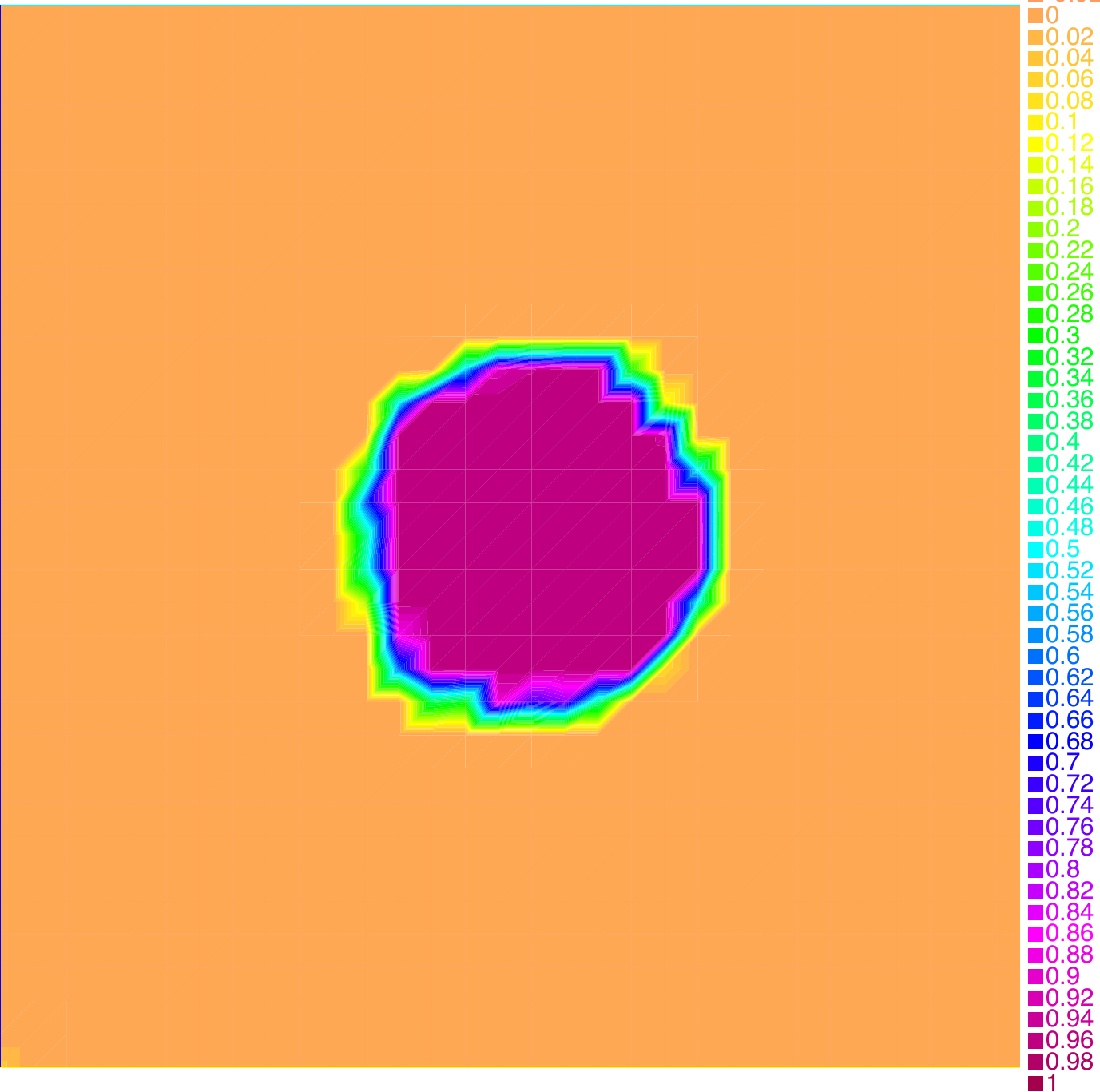}\\
$t=0$ & $t=0.3$ & $t=0.5$ & $t=0.7$ &$t=1$ \\
\end{tabular}
\caption{\textit{Snapshot of the approximate solution $\rho_{h,m}(t,.)$ to \eqref{equation HS}, with $m=100$, $h=0.005$.}}
\label{figure m=100 }
\end{figure}

\section{A tumor growth model with nutrient}
\label{part4-section2HSN}
In this section we use the same approach for the following tumor growth model with nutrients, appearing e.g. in \cite{PQV} 

\begin{eqnarray}
\label{system HS-nut}
\left\{\begin{array}{l}
\partial_t \rho -\dive( \rho \nabla p)= \rho \left((1-p)(c +  c_1) -c_2  \right),\\
\partial_t c - \Delta c = -\rho c,\\
0\leqslant \rho \leqslant 1,\\
p \geqslant 0 \mbox{  and  } p(1-\rho)=0, \\
\rho_{|t=0}=\rho_0, \, c_{|t=0}= c_0.
\end{array} \right.
\end{eqnarray}
Here $c_1$ and $c_2$ are two positive constants, and the nutrient $c$ is now diffusing in $\Omega$ in addition to begin simply consumed by the tumor $\rho$, according to the second equation.
For technical convenience we work here on a convex bounded domain $\Omega\subset\Rn$, endowed with natural Neumann boundary conditions for both $\rho$ and $c$.

Contrarily to section \ref{part4-section2HS} this is not a $\KFR$ gradient flow anymore, and we therefore introduce a semi-implicit splitting scheme.
Starting from the initial datum $\rho^0_{h,m}:=\rho_0,c_{h,m}^0:=c_0$ we construct four sequences $\rho_{h,m}^{k+1/2},\rho_{h,m}^k,c_{h,m}^{k+1/2},c_{h,m}^k$, defined recursively as
\begin{eqnarray}
 \label{splitting scheme w}
\left\{\begin{array}{l}
\rho_{h,m}^{k+1/2} \in \argmin\limits_{\rho \in \M^+, |\rho|=|\rho_{h,m}^k|} \left\{ \frac{1}{2h} \W^2(\rho, \rho_{h,m}^k) + \F_m(\rho ) \right\},\\
\\

c_{h,m}^{k+1/2} \in \argmin\limits_{c \in \M^+,|c|=|c_{h,m}^k|} \left\{ \frac{1}{2h} \W^2(c, c_{h,m}^{k}) + \E(\rho ) \right\},
\end{array}\right.
\end{eqnarray} 
and
\begin{eqnarray}
 \label{splitting scheme fr}
\left\{\begin{array}{l}
\rho_{h,m}^{k+1} \in \argmin\limits_{\rho \in \M^+} \left\{ \frac{1}{2h} \FR^2(\rho, \rho_{h,m}^{k+1/2}) +  \E_{1,m}(\rho | c_{h,m}^{k+1/2}) \right\},\\
\\
c_{h,m}^{k+1} \in \argmin\limits_{c\in \M^+} \left\{ \frac{1}{2h} \FR^2(c, c_{h,m}^{k+1/2}) + \E_2(c |\rho_{h,m}^{k+1/2}) \right\}, 

\end{array}\right.
\end{eqnarray} 
where
$$
\E(\rho):= \int_\Omega \rho \log(\rho),
$$
$$
\E_{1,m}(\rho |c):= \int_\Omega \left(c +c_1\right) \frac{\rho^m}{m-1} + \int_\Omega (c_2 - c -c_1) \rho ,
$$
and
$$ \E_2(c|\rho):= \int_\Omega \rho c.$$

As earlier it is easy to see that these sequences are well-defined (i-e there exists a unique minimizer for each step), and the pressures are defined as before as
$$
p_{h,m}^{k+1/2}:=\frac{m}{m-1}(\rho_{h,m}^{k+1/2})^{m-1}
\quad\mbox{and}\quad
p_{h,m}^{k+1}:=\frac{m}{m-1}(\rho_{h,m}^{k+1})^{m-1}.
$$
We denote again by $a_{h,m}(t),\tilde a_{h,m}(t)$ the piecewise constant interpolation of any discrete quantity $a^{k+1}_{h,m},a^{k+1/2}_{h,m}$ respectively.
Our main result reads:
\begin{thm}
\label{theo:existence system nutrient}
Assume $\rho_0 \in BV(\Omega)$ with $\rho_0 \leqslant 1$ and $c_0 \in L^\infty(\Omega) \cap BV(\Omega)$. Then $\rho_{h,m}$ and $\trho_{h,m}$ strongly converge to $\rho$ in $L^1((0,T)\times \Omega)$ and $c_{h,m}$ and $\tilde{c}_{h,m}$ strongly converge to $c$ in $L^1((0,T)\times \Omega)$ when $h\searrow 0$ and $m\nearrow+\infty$.
Moreover, if $mh\rightarrow 0$, then $p_{h,m},\tilde{p}_{h,m}$ converge weakly in $L^2((0,T) , H^1(\Omega))$ to a unique $p$, and $(\rho,p,c)$ is a solution of \eqref{system HS-nut}.
\end{thm}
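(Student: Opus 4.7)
The proof follows the template of Theorem~\ref{existence HS implicit} applied simultaneously to both species, with the coupling handled by the semi-implicit choice of the energies $\E_{1,m}(\,\cdot\,|c^{k+1/2})$ and $\E_2(\,\cdot\,|\rho^{k+1/2})$. The plan is to establish, uniformly in $h\to 0$ and $m\to\infty$: (a) pointwise bounds $0\leqslant\rho_{h,m}\leqslant 1$ and $0\leqslant c_{h,m}\leqslant\|c_0\|_{L^\infty}$; (b) $L^1$-closeness of the half- and full-step interpolants; (c) a coupled $BV$-in-space bound; (d) a total-square $\KFR$ estimate for both species together with an $L^2(H^1)$ bound for the pressure $\tilde p_{h,m}$; and then to pass to the limit via Rossi--Savar\'e compactness and the Euler--Lagrange conditions \eqref{opt Wasserstein}--\eqref{opt Fisher-Rao}.

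For (a), the Wasserstein steps preserve the $L^\infty$ bounds by Otto's maximum principle, while the Fisher--Rao optimality for $\rho$ reads
$$
\sqrt{\rho^{k+1}_{h,m}}-\sqrt{\rho^{k+1/2}_{h,m}}=\tfrac{h}{2}\sqrt{\rho^{k+1}_{h,m}}\big[(1-p^{k+1}_{h,m})(c^{k+1/2}_{h,m}+c_1)-c_2\big],
$$
and on $\{\rho^{k+1}>1\}$ the bracket is strictly negative ($p^{k+1}>m/(m-1)>1$ while $c+c_1,c_2>0$), yielding a contradiction as in Lemma~\ref{lem:borne sup inf 1}; for $c$, the relation $\sqrt{c^{k+1}}-\sqrt{c^{k+1/2}}=-\tfrac{h}{2}\sqrt{c^{k+1}}\,\rho^{k+1/2}\leqslant 0$ gives $c^{k+1}\leqslant c^{k+1/2}\leqslant\|c_0\|_{L^\infty}$. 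With these universal bounds, the argument of Lemma~\ref{lem encadrement} applied to each reaction step furnishes (b) with $m$-independent constants.

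For the coupled $BV$ estimate (c), the Wasserstein steps do not increase the $BV$ norm \cite{DPMSV}. Applied pointwise, the implicit function theorem yields $\rho^{k+1}_{h,m}(x)=R(\rho^{k+1/2}_{h,m}(x),c^{k+1/2}_{h,m}(x))$ with Jacobian bounds $|\partial_1 R|\leqslant 1+Ch$ and $|\partial_2 R|\leqslant Ch$, \emph{uniformly in $m$}: the term $p'(\rho)=m\rho^{m-2}$ appearing in $\partial_\rho f$ is positive (and only strengthens the lower bound on $\partial_\rho f$), while $\partial_\gamma f=-\tfrac{h}{2}\sqrt{\rho}(1-p)$ is of order $h$ thanks to $\rho\leqslant 1$ and $|1-p|\leqslant m/(m-1)\leqslant 2$. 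Similarly $c^{k+1}_{h,m}=G(c^{k+1/2}_{h,m},\rho^{k+1/2}_{h,m})=c^{k+1/2}_{h,m}/(1+h\rho^{k+1/2}_{h,m}/2)^2$ is explicit and $|\partial_1 G|\leqslant 1$, $|\partial_2 G|\leqslant Ch\|c_0\|_{L^\infty}$. Standard $BV\circ\mathrm{Lip}$ composition \cite{AFP} then yields
$$
\|\rho^{k+1}_{h,m}\|_{BV}+\|c^{k+1}_{h,m}\|_{BV}\leqslant (1+Ch)\big(\|\rho^{k}_{h,m}\|_{BV}+\|c^{k}_{h,m}\|_{BV}\big),
$$
and discrete Gr\"onwall concludes. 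Step (d) follows the template of section~\ref{part4-section2HS}: testing each minimization with the previous state, the linear contributions from $c_1,c_2$ and $\int\rho c$ are absorbed via the $L^1$-closeness from (b), while the porous-media contributions $(c+c_1)\rho^m/(m-1)$ are controlled by $Ch$ using the FR-closeness of (b) together with $\rho\leqslant 1$ (as in Proposition~\ref{prop:decr F1}, uniformly in $m$); Proposition~\ref{prop:comparison_d_W_H} then produces the $\tfrac12$-H\"older-in-$\KFR$ estimate. The flow-interchange of Lemma~\ref{borne pression porous media} applies verbatim to the $\rho$-Wasserstein step (uncoupled from $c$) and delivers $\|\tilde p_{h,m}\|_{L^2(H^1)}\leqslant C_T$, while the classical JKO entropy-dissipation estimate combined with $\tilde c_{h,m}\leqslant\|c_0\|_{L^\infty}$ gives $\|\nabla\tilde c_{h,m}\|_{L^2(Q_T)}\leqslant C_T$.

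Once (a)--(d) are in hand, Rossi--Savar\'e's extension of Aubin--Lions \cite{RS} delivers strong $L^1(Q_T)$ convergence of $\trho_{h,m}$ and $\tilde c_{h,m}$ (and hence of $\rho_{h,m},c_{h,m}$ by (b)); Proposition~\ref{prop:CV_phm_rhohm} identifies $p=\tilde p$ under the assumption $mh\to 0$; Lemma~\ref{lem:properties pressure} gives the compatibility $p(1-\rho)=0$; and interpolation with the $L^\infty$ bounds upgrades the $L^1$ convergence to any $L^q(Q_T)$, $q<\infty$, which is enough to pass to the limit in the nonlinear couplings $\rho(1-p)(c+c_1)$, $c_2\rho$ and $\rho c$ appearing in the weak formulation. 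I expect the main technical obstacle to be the coupled $BV$ estimate of step (c): compared to the scalar setting of section~\ref{part4-section2HS}, the pointwise inverse functions $R$ and $G$ now depend on two arguments, and the cross-derivatives $\partial_2 R,\partial_2 G$ must remain of order $h$ \emph{uniformly} as $m\to\infty$, which is precisely where the universal bound $\rho\leqslant 1$ plays its essential role by quenching the otherwise divergent $m$-dependence of the pressure $p=m\rho^{m-1}/(m-1)$.
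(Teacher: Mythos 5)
Your proposal follows essentially the same architecture as the paper's proof: (a) $L^\infty$ bounds $\rho\leqslant 1$, $c\leqslant\|c_0\|_{L^\infty}$ via Otto's maximum principle on the Wasserstein steps and the sign of the bracket on the Fisher--Rao steps; (b) the $m$-uniform FR-closeness estimates \eqref{eq:encadrement nutrient}; (c) the coupled $BV$ estimate from the composition rule with the two-argument inverses $R(\mu,c)$ and $G(c,\rho)$ and the derivative bounds $\partial_1 R\leqslant 1+C_Th$, $|\partial_2 R|\leqslant C_Th$, $\partial_1 G\leqslant 1$, $|\partial_2 G|\leqslant C_Th$ (using precisely the universal $\rho\leqslant 1$ bound to quench the $m$-dependence of $p$); (d) the telescoping dissipation estimates for all four energies, the $\tfrac12$-H\"older-in-$\KFR$ estimate, the flow-interchange bound for $\tilde p_{h,m}$ in $L^2(H^1)$, and the Rossi--Savar\'e compactness, followed by passing to the limit in the Euler--Lagrange conditions. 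This is exactly what the paper does (Lemma 6.2, the four dissipation inequalities, Lemma 6.3, Proposition 6.4). Your extra remark that the Euler--Lagrange relation $\nabla c^{k+1/2}=h^{-1}c^{k+1/2}\nabla\psi$ combined with $c\leqslant\|c_0\|_{L^\infty}$ yields $\|\nabla\tilde c_{h,m}\|_{L^2(Q_T)}\leqslant C_T$ is a correct and useful observation that the paper leaves implicit when it invokes ``the same proof as Lemma~\ref{borne pression porous media}'' for $c$.
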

Note that uniqueness of solutions would result in convergence of the whole sequence.
Uniqueness was proved in \cite[thm. 4.2]{PQV} for slightly more regular weak solutions, but we did not push in this direction for the sake of simplicity.
The method of proof is almost identical to section~\ref{part4-section2HS} so we only sketch the argument and emphasize the main differences. 

We start by recalling the optimality conditions for the scheme \eqref{splitting scheme w}-\eqref{splitting scheme fr}.
The Euler-Lagrange equations for the tumor densities in the Wasserstein and Fisher-Rao steps are
\begin{eqnarray}
\label{eq:optimality_rho_nutrients}
\left\{ \begin{array}{l}
\rho_{h,m}^{k+1/2} \nabla p_{h,m}^{k+1/2} = \frac{\nabla \varphi}{h}\rho_{h,m}^{k+1/2},\\
\sqrt{\rho_{h,m}^{k+1}} - \sqrt{\rho_{h,m}^{k+1/2}} = \frac{h}{2}\sqrt{\rho_{h,m}^{k+1}} \left( (1 - p_{h,m}^{k+1})(c_{h,m}^{k+1/2} + c_1) -c_2 \right),
\end{array}\right.
\end{eqnarray}
where $\varphi$ is a (backward) Kantorovich potential for $\W(\rho_{h,m}^{k+1/2},\rho_{h,m}^{k})$.
For the nutrient, the Euler-Lagrange equations are
\begin{eqnarray}
\label{eq:optimality_c_FR_nutrients}
\left\{ \begin{array}{l}
\nabla c_{h,m}^{k+1/2}   = \frac{\nabla \psi}{h}c_{h,m}^{k+1/2},\\
\sqrt{c_{h,m}^{k+1}} - \sqrt{c_{h,m}^{k+1/2}} =- \frac{h}{2}\sqrt{c_{h,m}^{k+1}} \rho_{h,m}^{k+1/2},
\end{array}\right.
\end{eqnarray}
with $\psi$ a Kantorovich potential for $\W(c_{h,m}^{k+1/2},c_{h,m}^{k})$.

Using the optimality conditions for the Fischer-Rao steps, we obtain directly the following $L^\infty$ bounds:

\begin{lem}
\label{borne sup inf 1 and nut}
For all $k \geqslant  0$
$$
\|c_{h,m}^{k+1} \|_{L^\infty(\Omega)} \leqslant \|c_{h,m}^{k+1/2} \|_{L^\infty(\Omega)} \leqslant \|c_{h,m}^{k} \|_{L^\infty(\Omega)},
$$
and at the continuous level
$$ 
\|c_{h,m}(t,\cdot) \|_{L^\infty(\Omega)}, \|\tilde{c}_{h,m}(t,\cdot) \|_{L^\infty(\Omega)} \leqslant \|c_0\|_{L^\infty(\Omega)}
\qquad \forall\, t\geq 0.
$$
Moreover, 
$$
\| \rho_{h,m}(t,\cdot) \|_\infty,\| \trho_{h,m}(t,\cdot) \|_\infty \leqslant 1
$$
and there exists $c_T \equiv c_T(\|c_0\|_{L^\infty}),C_T \equiv C_T(\|c_0\|_{L^\infty}) >0$ such that
\begin{equation}
\label{eq:encadrement nutrient}
\begin{array}{c}
(1-c_Th)\rho_{h,m}^{k+1/2}(x) \leqslant \rho_{h,m}^{k+1}(x) \leqslant (1+C_Th)\rho_{h,m}^{k+1/2}(x)
\qquad\mbox{a.e. in } \Omega.\\
(1-h) c_{h,m}^{k+1/2}(x) \leqslant c_{h,m}^{k+1}(x) \leqslant c_{h,m}^{k+1/2}(x) \qquad\mbox{a.e. in } \Omega.
\end{array}
\end{equation}
\end{lem}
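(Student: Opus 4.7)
The plan is a single joint induction on $k\geqslant 0$, maintaining as induction hypothesis the discrete bounds $\rho_{h,m}^k\leqslant 1$ and $\|c_{h,m}^k\|_{L^\infty}\leqslant \|c_0\|_{L^\infty}$, which hold for $k=0$ by assumption. I would combine Otto's maximum principle for both Wasserstein steps with a direct algebraic analysis of the two (explicit) Fisher--Rao optimality conditions \eqref{eq:optimality_rho_nutrients}--\eqref{eq:optimality_c_FR_nutrients}. Once the four pointwise bounds are established at level $k+1$, the continuous-level $L^\infty$ statements follow from the piecewise-constant interpolation, and the encadrements \eqref{eq:encadrement nutrient} drop out of the same Euler--Lagrange relations.

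For the nutrient, the Wasserstein step in \eqref{splitting scheme w} is the classical JKO step for the heat equation driven by $\E(c)=\int c\log c$, so Otto's maximum principle gives $\|c_{h,m}^{k+1/2}\|_{L^\infty}\leqslant \|c_{h,m}^{k}\|_{L^\infty}$. Rewriting the second line of \eqref{eq:optimality_c_FR_nutrients} as
$$
\sqrt{c_{h,m}^{k+1}}\bigl(1+\tfrac{h}{2}\rho_{h,m}^{k+1/2}\bigr)=\sqrt{c_{h,m}^{k+1/2}},
$$
nonnegativity of $\rho_{h,m}^{k+1/2}$ at once yields $c_{h,m}^{k+1}\leqslant c_{h,m}^{k+1/2}$. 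Using moreover $\rho_{h,m}^{k+1/2}\leqslant 1$ (from Otto's maximum principle applied to the Wasserstein step for $\rho$, starting from $\rho_{h,m}^k\leqslant 1$) and squaring gives $c_{h,m}^{k+1}\geqslant (1+h/2)^{-2}c_{h,m}^{k+1/2}\geqslant (1-h)c_{h,m}^{k+1/2}$ for $h$ small, which is the second line of \eqref{eq:encadrement nutrient}.

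For the tumor density, Otto's principle applied to the Wasserstein JKO step for the Porous Medium energy $\F_m$ gives $\|\rho_{h,m}^{k+1/2}\|_{L^\infty}\leqslant \|\rho_{h,m}^{k}\|_{L^\infty}\leqslant 1$. The bound $\rho_{h,m}^{k+1}\leqslant 1$ then follows verbatim from the contradiction argument of Lemma~\ref{lem:borne sup inf 1}: were $E:=\{\rho_{h,m}^{k+1}>1\}$ of positive measure, on $E$ one would have $p_{h,m}^{k+1}>\tfrac{m}{m-1}$, and using $c_{h,m}^{k+1/2}+c_1\geqslant 0$ one gets $(1-p_{h,m}^{k+1})(c_{h,m}^{k+1/2}+c_1)-c_2<-c_2<0$; inserting this into the second line of \eqref{eq:optimality_rho_nutrients} forces $\sqrt{\rho_{h,m}^{k+1}}<\sqrt{\rho_{h,m}^{k+1/2}}\leqslant 1$, a contradiction.

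The encadrement for $\rho$ follows by bounding the reaction coefficient
$$
A:=(1-p_{h,m}^{k+1})(c_{h,m}^{k+1/2}+c_1)-c_2
$$
from above and below using only the bounds already established at this step: since $p_{h,m}^{k+1}\in [0,\tfrac{m}{m-1}]$ (from $0\leqslant\rho_{h,m}^{k+1}\leqslant 1$) and $0\leqslant c_{h,m}^{k+1/2}\leqslant \|c_0\|_{L^\infty}$, we obtain
$$
-\tfrac{\|c_0\|_{L^\infty}+c_1}{m-1}-c_2\leqslant A\leqslant \|c_0\|_{L^\infty}+c_1.
$$
Solving $\sqrt{\rho_{h,m}^{k+1}}(1-\tfrac{h}{2}A)=\sqrt{\rho_{h,m}^{k+1/2}}$ for $h$ small and squaring, exactly as in the proof of Lemma~\ref{lem encadrement}, yields constants $c_T,C_T$ depending only on $\|c_0\|_{L^\infty}$, $c_1$ and $c_2$ (for $m$ large enough) such that $(1-c_Th)\rho_{h,m}^{k+1/2}\leqslant \rho_{h,m}^{k+1}\leqslant (1+C_Th)\rho_{h,m}^{k+1/2}$, closing the induction. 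The only subtle point is one of bookkeeping: the encadrement for $c$ uses $\rho^{k+1/2}\leqslant 1$, and the encadrement for $\rho$ uses $c^{k+1/2}\leqslant \|c_0\|_{L^\infty}$, so one must extract all four $L^\infty$ bounds first (Wasserstein then Fisher--Rao, exploiting the decoupling of the two Wasserstein problems in \eqref{splitting scheme w}) and only afterwards read off the two quantitative pinchings.
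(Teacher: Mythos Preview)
Your proposal is correct and follows essentially the same approach as the paper: Otto's maximum principle for both Wasserstein steps, the contradiction argument of Lemma~\ref{lem:borne sup inf 1} for $\rho_{h,m}^{k+1}\leqslant 1$, nonnegativity of $\rho_{h,m}^{k+1/2}$ in \eqref{eq:optimality_c_FR_nutrients} for the $c$-monotonicity, and the $L^\infty$ bound on the reaction coefficient $A=\Phi(p_{h,m}^{k+1},c_{h,m}^{k+1/2})$ to reproduce Lemma~\ref{lem encadrement}. Your write-up is in fact more careful than the paper's in making explicit the bookkeeping---that the pinching for $c$ uses $\rho^{k+1/2}\leqslant 1$ while the pinching for $\rho$ uses $c^{k+1/2}\leqslant\|c_0\|_{L^\infty}$---but this is organizational, not a different argument.
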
 

\begin{proof}
The proof of the estimates on $c_{h,m}$ and $\tilde{c}_{h,m}$ is obvious because one step of Wasserstein gradient flow with the Boltzmann entropy decreases the $L^\infty$-norm in \eqref{splitting scheme w} (see \cite{O1,A}), and, because the product $\sqrt{c_{h,m}^{k+1}} \rho_{h,m}^{k+1/2}$ is nonnegative in \eqref{eq:optimality_c_FR_nutrients}, the $L^\infty$-norm is also nonincreasing during the Fischer-Rao step.
The proof for $\rho_{h,m}$ and $\trho_{h,m}$ is the same as in lemma \ref{lem:borne sup inf 1}.
Using the fact that $\| \trho_{h,m}(t,\cdot) \|_\infty \leqslant 1$, we see that the term $\Phi(p_{h,m}^{k+1},c_{h,m}^{k+1/2}):=(1 - p_{h,m}^{k+1})(c_{h,m}^{k+1/2} + c_1) -c_2 $ in \eqref{eq:optimality_rho_nutrients} is bounded in $L^\infty$ uniformly in $k$.
This allows to argue exactly as in Lemma \ref{lem encadrement} to retrieve the estimate \eqref{eq:encadrement nutrient} and concludes the proof.  
\end{proof}

With these bounds it is easy to prove as in proposition \ref{décroissance F1} that
$$\begin{array}{c}
\F_m(\rho_{h,m}^{k+1}) \leqslant  \F_m(\rho_{h,m}^{k+1/2}) +C_Th,\\
\E_{1,m}(\rho_{h,m}^{k+1/2} |c_{h,m}^{k+1/2}) - \E_{1,m}(\rho_{h,m}^{k+1} |c_{h,m}^{k+1/2}) \leqslant  C_Th,\\
\E(c_{h,m}^{k+1})  \leqslant  \E(c_{h,m}^{k+1/2}) +C_Th,\\
\E_2(c_{h,m}^{k+1/2} |\rho_{h,m}^{k+1/2}) - \E_2(c_{h,m}^{k+1} |\rho_{h,m}^{k+1/2}) \leqslant  C_Th,
\end{array}.
$$
for some $C_T$ independent of $m$.
Then we obtain the usual $\frac 12$-H\"older estimates in time with respect to the $\KFR$ distance, which in turn implies that $\rho_{h,m},\trho_{h,m}$ converge to some $\rho \in L^\infty([0,T],L^1(\Omega))$ and $c_{h,m},\tilde{c}_{h,m}$ converge to some  $c\in L^\infty([0,T],L^1(\Omega))$ pointwise in time with respect to $\KFR$, see \eqref{sum telescopic FR}, Proposition~\ref{prop:1/2_holder}, and \eqref{estimate holder} for details.

As before we need to improve the convergence in order to pass to the limit in the nonlinear terms.
For $\rho_{h,m}$ and $\trho_{h,m}$, this follows from

\begin{lem}
For all $T>0$, if $\rho_0,c_0 \in BV(\Omega)$,
$$
\begin{array}{c}
\sup\limits_{t \in [0,T]} \left\{\| \rho_{h,m}(t,\cdot) \|_{BV(\Omega)}+\| c_{h,m}(t,\cdot) \|_{BV(\Omega)}   \right\} \leqslant e^{C_TT} (\| \rho_0 \|_{BV(\Omega)} +\|c_0 \|_{BV(\Omega)})\\
\sup\limits_{t \in [0,T]} \left\{\| \trho_{h,m}(t,\cdot) \|_{BV(\Omega)}+\| \tilde{c}_{h,m}(t,\cdot) \|_{BV(\Omega)}\right\} \leqslant e^{C_TT} (\| \rho_0 \|_{BV(\Omega)} +\|c_0 \|_{BV(\Omega)}).
\end{array}
$$
\end{lem}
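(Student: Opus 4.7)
The plan is to mirror the strategy of Lemma \ref{prop:BV_estimate} but to carefully track how the two species interact along both types of elementary steps. At the discrete level I would aim for a one-step Gronwall inequality
\[
\|\rho_{h,m}^{k+1}\|_{BV}+\|c_{h,m}^{k+1}\|_{BV} \leqslant (1+C_Th)\bigl(\|\rho_{h,m}^{k}\|_{BV}+\|c_{h,m}^{k}\|_{BV}\bigr),
\]
and then iterate $N=\lfloor T/h\rfloor$ times with $(1+C_Th)^{N}\leqslant e^{C_TT}$. The statement for the piecewise-constant interpolants $\rho_{h,m},c_{h,m},\trho_{h,m},\tilde c_{h,m}$ then follows by definition.

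For the two Wasserstein sub-steps in \eqref{splitting scheme w} I would invoke \cite[thm. 1.1]{DPMSV}: a single JKO step of any internal energy on a convex domain cannot increase the $BV$-norm, hence both $\|\rho_{h,m}^{k+1/2}\|_{BV}\leqslant \|\rho_{h,m}^{k}\|_{BV}$ (applied to $\mathcal F_m$) and $\|c_{h,m}^{k+1/2}\|_{BV}\leqslant \|c_{h,m}^{k}\|_{BV}$ (applied to $\mathcal E$) hold, without any mixing. For the Fisher-Rao step on $c$, the optimality condition \eqref{eq:optimality_c_FR_nutrients} yields the explicit pointwise expression
\[
c_{h,m}^{k+1}(x)=\Phi_c\bigl(c_{h,m}^{k+1/2}(x),\rho_{h,m}^{k+1/2}(x)\bigr),\qquad \Phi_c(\sigma,\mu):=\frac{\sigma}{(1+h\mu/2)^2},
\]
so that $|\partial_\sigma\Phi_c|\leqslant 1$ and $|\partial_\mu\Phi_c|\leqslant h\|c_0\|_\infty$ thanks to Lemma \ref{borne sup inf 1 and nut}. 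The $BV$ chain rule then gives
\[
\|c_{h,m}^{k+1}\|_{BV}\leqslant \|c_{h,m}^{k+1/2}\|_{BV}+C_Th\,\|\rho_{h,m}^{k+1/2}\|_{BV}.
\]

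For the Fisher-Rao step on $\rho$, the optimality condition \eqref{eq:optimality_rho_nutrients} defines $\rho_{h,m}^{k+1}(x)=\Phi_\rho\bigl(\rho_{h,m}^{k+1/2}(x),c_{h,m}^{k+1/2}(x)\bigr)$ implicitly through
\[
F(\rho,\mu,\sigma):=\sqrt{\rho}\Bigl(1-\tfrac{h}{2}\bigl[(1-\tfrac{m}{m-1}\rho^{m-1})(\sigma+c_1)-c_2\bigr]\Bigr)-\sqrt{\mu}=0.
\]
Exactly as in Lemma \ref{prop:BV_estimate}, the implicit function theorem and the uniform $L^\infty$ bounds from Lemma \ref{borne sup inf 1 and nut} ensure that $\Phi_\rho\in\C^1$. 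A direct computation of $\partial_\rho F$ shows that the $\tfrac{m}{m-1}\rho^{m-1}$ contribution has the favourable sign (since $\sigma+c_1>0$) and therefore only strengthens the lower bound $\partial_\rho F\geqslant \tfrac{1}{2\sqrt\rho}(1-C_Th)$; combined with $|\partial_\mu F|=\tfrac{1}{2\sqrt\mu}$, $|\partial_\sigma F|=\tfrac{h}{2}\sqrt{\rho}|1-p|\leqslant C_T h\sqrt\rho$, and $\sqrt{\rho/\mu}\leqslant 1+C_Th$ (read off the optimality equation itself), this gives the uniform bounds $|\partial_\mu\Phi_\rho|\leqslant 1+C_Th$ and $|\partial_\sigma\Phi_\rho|\leqslant C_Th$ uniformly in $m>1$. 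Applying the $BV$ chain rule once more,
\[
\|\rho_{h,m}^{k+1}\|_{BV}\leqslant (1+C_Th)\,\|\rho_{h,m}^{k+1/2}\|_{BV}+C_Th\,\|c_{h,m}^{k+1/2}\|_{BV}.
\]
Adding this to the $c$-estimate and combining with the $BV$-non-increase of the two Wasserstein steps produces the announced one-step inequality, and induction concludes the proof.

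The main obstacle is the implicit step for $\rho$: one must verify that the Jacobians $\partial_\mu\Phi_\rho$ and $\partial_\sigma\Phi_\rho$ admit bounds that are uniform in the regularization parameter $m$, despite the apparently singular term $\tfrac{m}{m-1}\rho^{m-1}$ appearing in the pressure. As outlined above, this singular term fortunately enters $\partial_\rho F$ with a favourable sign and is tamed by the universal bound $\rho\leqslant 1$ from Lemma \ref{borne sup inf 1 and nut}, so that no restriction of the form $mh\to 0$ is needed at this stage.
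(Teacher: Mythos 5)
Your proof is correct and follows essentially the same route as the paper: BV-nonincrease of the Wasserstein steps via \cite[thm.~1.1]{DPMSV}, an implicit-function-theorem representation $\rho^{k+1}=R(\rho^{k+1/2},c^{k+1/2})$ with Jacobian bounds $0<\partial_\mu R\leqslant 1+C_Th$ and $|\partial_c R|\leqslant C_Th$ that are uniform in $m$ thanks to the favourable sign of the pressure term and the universal bound $\rho\leqslant 1$, the $\mathrm{Lip}\circ BV$ chain rule, and a discrete Gronwall iteration. The only cosmetic difference is that you solve the $c$-reaction step explicitly as $c^{k+1}=c^{k+1/2}/(1+h\rho^{k+1/2}/2)^2$ rather than invoking ``the same argument'' as for $\rho$; this makes that sub-step slightly more transparent but does not change the structure of the argument.
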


\begin{proof}
The argument is a generalization of Lemma \ref{prop:BV_estimate}, see \cite[remark 5.1]{GM}. First, the $BV$-norm is nonincreasing during the Wasserstein step, \cite[thm. 1.1]{DPMSV},
$$ \| \rho_{h,m}^{k+1/2} \|_{BV(\Omega)} \leqslant  \| \rho_{h,m}^{k} \|_{BV(\Omega)} \text{  and  } \| c_{h,m}^{k+1/2} \|_{BV(\Omega)} \leqslant  \| c_{h,m}^{k} \|_{BV(\Omega)}.$$ 
Arguing as in Lemma \ref{prop:BV_estimate}, we observe that, inside $\supp \rho_{h,m}^{k+1/2}=\supp \rho_{h,m}^{k+1}$, the minimizer $\rho=\rho_{h,m}^{k+1}(x)$ is the unique positive solution of $f(\rho, \rho_{h,m}^{k+1/2}(x),c_{h,m}^{k+1/2}(x))=0$, with
$$
f(\rho, \mu , c) = \sqrt{\rho}\left( 1-\frac{h}{2}\left( \left( 1- \frac{m}{m-1}\rho^{m-1}\right)(c+c_1) -c_2 \right)\right) -\sqrt{\mu}.
$$
For $\mu >0$ the implicit function theorem gives as before a $\mathcal{C}^1$ map $R$ such that $f(\rho,\mu,c)=0 \Leftrightarrow \rho=R(\mu,c)$. 
An easy algebraic computation and \eqref{eq:encadrement nutrient} then gives $0 <\partial_\mu R(\mu,c) \leqslant (1+C_Th)$ and $|\partial_c  R(\mu,c) | \leqslant C_Th$ for some constant $C_T>0$ independent of $h,m,k$.
This implies that 
\begin{eqnarray*}
\| \rho_{h,m}^{k+1} \|_{BV(\Omega)} & \leqslant & (1+C_Th)\| \rho_{h,m}^{k+1/2} \|_{BV(\Omega)} + C_Th\| c_{h,m}^{k+1/2} \|_{BV(\Omega)}\\
&\leqslant & (1+C_Th)\| \rho_{h,m}^{k} \|_{BV(\Omega)} + C_Th\| c_{h,m}^{k} \|_{BV(\Omega)}.
\end{eqnarray*}

The same argument shows that
$$
\| c_{h,m}^{k+1} \|_{BV(\Omega)} \leqslant (1+C_Th)\| c_{h,m}^{k} \|_{BV(\Omega)} + C_Th\| \rho_{h,m}^{k} \|_{BV(\Omega)},
$$
and a simple induction allows to conclude.
\end{proof}

\begin{prop}
Up to extraction of a discrete sequence $h \to 0, m\to +\infty$,
$$
\rho_{h,m},\,\trho_{h,m}\to \rho \qquad \mbox{strongly in }L^1(Q_T)
$$
$$
p_{h,m}\rightharpoonup p \mbox{  and   }\tilde p_{h,m}\rightharpoonup \tilde p \qquad\mbox{weakly in all }L^q(Q_T)
$$
for all $T>0$.
If in addition $mh\to 0$ then $p=\tilde p \in L^2((0,T),H^1(\Omega))$ and $(\rho,p)$ satisfies
$$ 0 \leqslant \rho, p \leqslant 1 \quad \text{and}  \quad p(1-\rho)=0  \qquad \text{a.e. in } Q_T.
$$ 
\end{prop}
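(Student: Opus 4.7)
The plan is to mimic section~\ref{part4-section2HS} step by step, with only minor bookkeeping to absorb the nutrient coupling, since the discrete estimates from the two preceding lemmas already separate the $\rho$-equation and the $c$-equation enough that the argument for the tumor density is essentially unchanged. First I would establish the strong $L^1(Q_T)$ convergence of $\rho_{h,m}$ and $\trho_{h,m}$: the BV bound just proved gives compactness in space, while the energy estimates from \eqref{splitting scheme w}-\eqref{splitting scheme fr} combined with Proposition~\ref{prop:comparison_d_W_H} yield, exactly as in Proposition~\ref{prop:1/2_holder}, a total-square $\KFR$ estimate and hence the approximate $\tfrac{1}{2}$-Hölder bound $\KFR(\rho_{h,m}(t),\rho_{h,m}(s))\leqslant C_T|t-s+h|^{1/2}$. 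The Rossi-Savaré extension of Aubin-Lions \cite{RS} then produces a common strong $L^1(Q_T)$ limit $\rho$ for both families, since \eqref{eq:encadrement nutrient} gives $\|\rho_{h,m}(t)-\trho_{h,m}(t)\|_{L^1}\leqslant C_Th$.

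Next I would handle the pressures. The universal bound $p_{h,m}=\frac{m}{m-1}\rho_{h,m}^{m-1}\leqslant 2$ (and similarly for $\tilde p_{h,m}$), inherited from $\rho_{h,m}\leqslant 1$, makes $(p_{h,m}),(\tilde p_{h,m})$ uniformly bounded in every $L^q(Q_T)$, so along a further subsequence $p_{h,m}\rightharpoonup p$ and $\tilde p_{h,m}\rightharpoonup \tilde p$ weakly in each $L^q$. To identify the two limits when $mh\to 0$, I would use the $(m-1)$-Lipschitz character of $z\mapsto z^{m-1}$ on $[0,1]$:
$$
\|p_{h,m}(t)-\tilde p_{h,m}(t)\|_{L^1(\Omega)}\leqslant m\,\|\rho_{h,m}(t)-\trho_{h,m}(t)\|_{L^1(\Omega)}\leqslant C_T(mh)\to 0,
$$
exactly as in Proposition~\ref{prop:CV_phm_rhohm}, whence $p=\tilde p$ a.e.

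The step I expect to be the main technical obstacle, though formally analogous to Lemma~\ref{borne pression porous media}, is obtaining the $L^2((0,T);H^1(\Omega))$-bound on $\tilde p_{h,m}$ uniformly in $m$. Here the crucial observation is that the Wasserstein step in \eqref{splitting scheme w} for $\rho$ uses only the pure porous-media energy $\F_m$, with no nutrient weight: the nutrient enters only in the Fisher-Rao step through $\mathcal E_{1,m}(\cdot|c_{h,m}^{k+1/2})$. I would therefore replay the flow-interchange argument of Matthes-McCann-Savaré against the regularized PME flow $\partial_t\eta=\Delta\eta^{m-1}+\eps\Delta\eta$ verbatim, using the $\eps\to 0$ limit to recover
$$
h\int_\Omega\frac{m-1}{m}|\nabla p_{h,m}^{k+1/2}|^2\leqslant \F_{m-1}(\rho_{h,m}^k)-\F_{m-1}(\rho_{h,m}^{k+1/2})+\eps\bigl[\E(\rho_{h,m}^k)-\E(\rho_{h,m}^{k+1/2})\bigr],
$$
then controlling the dissipation of $\F_{m-1}$ along the modified Fisher-Rao step by the same method as in Proposition~\ref{prop:decr F1}, where the extra factor $(c+c_1)\leqslant \|c_0\|_{L^\infty}+c_1$ from $\mathcal E_{1,m}$ only contributes a harmless multiplicative constant. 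Summing over $k$ and bounding $\F_{m-1}(\rho_0)\leqslant \frac{1}{m-2}\|\rho_0\|_{L^1}$ by $\rho_0\leqslant 1$ yields the desired uniform $H^1$ bound, hence weak convergence $\nabla \tilde p_{h,m}\rightharpoonup \nabla p$ in $L^2$.

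Finally, the compatibility conditions follow by the dominated-convergence argument of Lemma~\ref{lem:properties pressure}. The bounds $0\leqslant\rho\leqslant 1$ and $0\leqslant p\leqslant 1$ pass to the limit directly. For $p(1-\rho)=0$, I would note that a.e. in $Q_T$ the product $p_{h,m}(1-\rho_{h,m})\to 0$ pointwise: when $\rho(t,x)<1$ then $\rho_{h,m}(t,x)\leqslant 1-\eps$ for $h$ small, $m$ large, so $p_{h,m}\leqslant \frac{m}{m-1}(1-\eps)^{m-1}\to 0$; when $\rho(t,x)=1$ then $1-\rho_{h,m}\to 0$ while $p_{h,m}\leqslant 2$. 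The uniform $L^\infty$ bound then allows Lebesgue's theorem to pass to the limit $\int p_{h,m}(1-\rho_{h,m})\varphi\to 0$, while the strong-weak pairing $(1-\rho_{h,m})\varphi\to(1-\rho)\varphi$ in $L^q$ and $p_{h,m}\rightharpoonup p$ in $L^{q'}$ identifies this limit with $\int p(1-\rho)\varphi$. Since $p(1-\rho)\geqslant 0$, this forces $p(1-\rho)=0$ a.e., completing the proposition.
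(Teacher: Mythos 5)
Your proposal is correct and follows essentially the same route as the paper, which at this point simply refers the reader back to Proposition~\ref{prop:CV_phm_rhohm}, Lemma~\ref{borne pression porous media}, and Lemma~\ref{lem:properties pressure} without spelling out the adaptations. You correctly identify the two points that need checking when carrying those arguments over to the nutrient-coupled scheme: the Wasserstein step for $\rho$ in \eqref{splitting scheme w} involves only the bare porous-medium functional $\F_m$ (the nutrient enters exclusively through the Fisher--Rao step), so the Matthes--McCann--Savar\'e flow-interchange computation transfers verbatim; and the dissipation of $\F_{m-1}$ along the modified Fisher--Rao step is controlled via the pointwise sandwich \eqref{eq:encadrement nutrient}, whose constants only depend on $\|c_0\|_{L^\infty}+c_1$ and hence remain uniform in $h,m$. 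The rest (uniform $L^q$ bounds on the pressures from $\rho_{h,m}\leqslant 1$, the $(m-1)$-Lipschitz estimate showing $p=\tilde p$ when $mh\to 0$, and the strong--weak pairing for $p(1-\rho)=0$) matches the paper's argument exactly.
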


\begin{proof}
The proof is the same as Proposition \ref{prop:CV_phm_rhohm}, Lemma \ref{borne pression porous media}, and Lemma \ref{lem:properties pressure}.
\end{proof}

In order to conclude the proof of Theorem \ref{theo:existence system nutrient} we only need to check that $\rho,p,c$ satisfy the weak formulation of \eqref{system HS-nut}: the strong convergence of $\rho_{h,m},c_{h,m}$ and the weak convergence of $p_{h,m}$ are enough to take the limit in the nonlinear terms as in section \ref{subsec:prop_pressure_HS}, and we omit the details.

\subsection*{Acknowledgements}
We warmly thank G. Carlier for fruitful discussions and suggesting us the problem in section~\ref{sec:KFRsplitting}


\bibliographystyle{plain}
\addcontentsline{toc}{chapter}{Bibliography}
\bibliography{Laborde_bib}

\end{document}